\newfont{\bcb}{msbm10}
\newfont{\matb}{cmbx10}
\newfont{\got}{eufm10}
\newtheorem{theorem}{Theorem}[section]
\newtheorem{lemma}[theorem]{Lemma}
\newtheorem{proposition}[theorem]{Proposition}
\newtheorem{corollary}[theorem]{Corollary}
\theoremstyle{definition}
\theoremstyle{remark}
\newtheorem{remark}[theorem]{Remark}
\numberwithin{equation}{section}
\begin{document}

\title[A closedness theorem and its applications]{A closedness theorem and applications\\
       in geometry of rational points\\
       over Henselian valued fields}

\author[Krzysztof Jan Nowak]{Krzysztof Jan Nowak}


\subjclass[2000]{Primary 12J25, 14B05, 14P10; Secondary 13J15,
14G27, 03C10}

\keywords{valued fields, algebraic power series, closedness
theorem, blowing up, descent property, quantifier elimination for
Henselian valued fields, quantifier elimination for ordered
abelian groups, fiber shrinking, curve selection, \L{}ojasiewicz
inequalities, hereditarily rational functions, regulous
Nullstellensatz, regulous Cartan's theorems}


\dedicatory{Dedicated to Goo Ishikawa on the occasion of his 60th
birthday}

\begin{abstract}
We develop geometry of algebraic subvarieties of $K^{n}$ over
arbitrary Henselian valued fields $K$ of equicharacteristic zero.
This is a continuation of our previous article concerned with
algebraic geometry over rank one valued fields. At the center of
our approach is again the closedness theorem to the effect that
the projections $K^{n} \times \mathbb{P}^{m}(K) \to K^{n}$ are
definably closed maps. It enables, in particular, application of
resolution of singularities in much the same way as over locally
compact ground fields. As before, the proof of that theorem uses,
among others, the local behavior of definable functions of one
variable and fiber shrinking, being a relaxed version of curve
selection. But now, to achieve the former result, we first examine
functions given by algebraic power series. All our previous
results will be established here in the general settings: several
versions of curve selection (via resolution of singularities) and
of the \L{}ojasiewicz inequality (via two instances of quantifier
elimination indicated below), extending continuous hereditarily
rational functions as well as the theory of regulous functions,
sets and sheaves, including Nullstellensatz and Cartan's theorems
A and B. Two basic tools are quantifier elimination for Henselian
valued fields due to Pas and relative quantifier elimination for
ordered abelian groups (in a many-sorted language with imaginary
auxiliary sorts) due to Cluckers--Halupczok. Other, new
applications of the closedness theorem are piecewise continuity of
definable functions, H\"{o}lder continuity of functions definable
on closed bounded subsets of $K^{n}$, the existence of definable
retractions onto closed definable subsets of $K^{n}$ and a
definable, non-Archimedean version of the Tietze--Urysohn
extension theorem. In a recent paper, we established a version of
the closedness theorem over Henselian valued fields with analytic
structure along with several applications.



\end{abstract}

\maketitle

\begin{center}
\end{center}

\section{Introduction}

Throughout the paper, $K$ will be an arbitrary Henselian valued
field of equicharacteristic zero with valuation $v$, value group
$\Gamma$, valuation ring $R$ and residue field $\Bbbk$. Examples
of such fields are the quotient fields of the rings of formal
power series and of Puiseux series with coefficients from a field
$\Bbbk$ of characteristic zero as well as the fields of Hahn
series (maximally complete valued fields also called
Malcev--Neumann fields; cf.~\cite{Kap}):
$$ \Bbbk((t^{\Gamma})) := \left\{ f(t) = \sum_{\gamma \in \Gamma} \
   a_{\gamma}t^{\gamma} : \ a_{\gamma} \in \Bbbk, \ \text{supp}\,
   f(t) \ \text{is well ordered} \right\}. $$
We consider the ground field $K$ along with the three-sorted
language $\mathcal{L}$ of Denef--Pas (cf.~\cite{Pa1,Now2}). The
three sorts of $\mathcal{L}$ are: the valued field $K$-sort, the
value group $\Gamma$-sort and the residue field $\Bbbk$-sort. The
language of the $K$-sort is the language of rings; that of the
$\Gamma$-sort is any augmentation of the language of ordered
abelian groups (and $\infty$); finally, that of the $\Bbbk$-sort
is any augmentation of the language of rings. The only symbols of
$\mathcal{L}$ connecting the sorts are two functions from the main
$K$-sort to the auxiliary $\Gamma$-sort and $\Bbbk$-sort: the
valuation map and an angular component map.

\vspace{1ex}

Every valued field $K$ has a topology induced by its valuation
$v$. Cartesian products $K^{n}$ are equipped with the product
topology, and their subsets inherit a topology, called the
$K$-topology. This paper is a continuation of our
paper~\cite{Now2} devoted to geometry over Henselian rank one
valued fields, and includes our recent
preprints~\cite{Now3,Now4,Now5}. The main aim is to prove (in
Section~8) the closedness theorem stated below, and next to derive
several results in the following Sections 9--14.

\begin{theorem}\label{clo-th}
Let $D$ be an $\mathcal{L}$-definable subset of $K^{n}$. Then the
canonical projection
$$ \pi: D \times R^{m} \longrightarrow D  $$
is definably closed in the $K$-topology, i.e.\ if $B \subset D
\times R^{m}$ is an $\mathcal{L}$-definable closed subset, so is
its image $\pi(B) \subset D$.
\end{theorem}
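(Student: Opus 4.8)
The plan is to adapt the argument used over rank one valued fields in \cite{Now2}, replacing the rank-one-specific ingredients by their counterparts valid over an arbitrary Henselian valued field of equicharacteristic zero: Pas's quantifier elimination for the Denef--Pas language, Cluckers--Halupczok's relative quantifier elimination for ordered abelian groups, and --- the decisive new input --- a description of the local behaviour of one-variable functions given by algebraic power series, from which the local behaviour of arbitrary one-variable definable functions will be deduced.

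First I would reduce to $m=1$. A composition of definably closed maps is definably closed, and the projection $D\times R^m\to D$ factors through the tower $D\times R^m\to D\times R^{m-1}\to\cdots\to D$, so it is enough to handle a single $R$-factor. Thus let $B\subseteq D\times R$ be $\mathcal{L}$-definable and closed in the $K$-topology, and let $a$ belong to the $K$-topology closure of $\pi(B)$ in $D$; the task is to produce some $b\in R$ with $(a,b)\in B$. The essential difficulty, absent over locally compact ground fields, is that $R$ is not compact: from a net $a_i\to a$ in $\pi(B)$ and points $b_i\in R$ lying over the $a_i$ one cannot in general extract a convergent subnet, so one must instead organise the approach to $a$ in a definable way.

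Next I would invoke fiber shrinking --- the relaxed form of curve selection --- at the accumulation point $a$ of the definable set $\pi(B)$. After permuting the coordinates of $K^n$ and translating $a$ to the origin, this yields an $\mathcal{L}$-definable set $S\subseteq\pi(B)$ still accumulating at $0$ and confined to a definable neighbourhood of the $x_n$-axis which shrinks onto it, so that the approach to $0$ inside $S$ is governed by the single parameter $x_n$. Selecting (after a harmless further shrinking, legitimate because by Pas's theorem the relevant data reside in the value group and residue field) a definable function of one variable picking out a point of the fibre of $B$ over the points of $S$, one is reduced to the one-variable statement: for an $\mathcal{L}$-definable $\phi\colon E\to R$ with $E\subseteq K$ and $0\in\overline{E}$, there is a definable $E'\subseteq E$ with $0\in\overline{E'}$ along which $\lim_{x\to 0}\phi(x)$ exists; as $\phi$ is bounded and $R$ is closed in $K$, this limit lies in $R$ and furnishes the required $b$.

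The crux is therefore the local behaviour of one-variable definable functions: after restriction to a suitable definable subset accumulating at the point in question, such a function becomes continuous and acquires a limit in $K\cup\{\infty\}$. By Pas's quantifier elimination the graph of $\phi$ is, piecewise, cut out by valuation and angular-component conditions pulled back from the $\Gamma$- and $\Bbbk$-sorts, and together with relative quantifier elimination for the ordered abelian group $\Gamma$ this transfers the monotonicity-and-limit analysis to $\Gamma$. To run this over an arbitrary Henselian valued field of equicharacteristic zero --- neither of rank one nor finitely ramified in general --- I would first establish the needed facts for functions given by algebraic power series in one variable (their Puiseux-type expansions, continuity on suitable domains and existence of one-sided limits), and then bootstrap to arbitrary one-variable definable functions. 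Feeding this back through the reduction yields $b\in R$ with $(a,b)$ in the closure of $B$; since $B$ is closed, $(a,b)\in B$ and $a\in\pi(B)$, so $\pi(B)$ is closed. I expect this one-variable analysis, and in particular the algebraic-power-series input underpinning it, to be the principal obstacle; granting it, fiber shrinking and the reductions proceed essentially as in \cite{Now2}.
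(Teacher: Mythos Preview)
Your overall strategy matches the paper's: reduce to $m=1$, use fiber shrinking to reduce to a one-parameter approach in the base, and ultimately invoke the one-variable limit theorem (Theorem~\ref{limit-th}), whose proof rests on the Newton--Puiseux analysis for algebraic power series. You have also correctly identified that the relative quantifier elimination of Cluckers--Halupczok replaces the Presburger argument used in the rank-one case.

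There is, however, a genuine gap at the step where you write ``Selecting \ldots\ a definable function of one variable picking out a point of the fibre of $B$ over the points of $S$.'' This is a definable-Skolem-function assertion, and in the Denef--Pas setting it is not generally available: neither the value group (an arbitrary ordered abelian group) nor the residue field need admit definable choice, so one cannot in general produce a definable section $\phi\colon E\to R$ of the projection $B\to\pi(B)$. The justification you offer (``the relevant data reside in the value group and residue field'') does not supply such a section; it only says that after Pas elimination the defining conditions are pulled back from those sorts.

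The paper avoids this entirely. After reducing to $n=m=1$, it does \emph{not} choose a section of $B\subset K\times R$; instead it applies Pas's preparation cell decomposition to $B$ itself, obtaining cells described by a center $c(x)$ and value/angular-component bounds $a(x),b(x),a_i(x),b_i(x),f_i(x)$. The limit theorem is then applied to these auxiliary one-variable functions, not to a hypothetical section. A short case analysis follows: if a lower bound tends to $0$ the cell fiber-shrinks to $(0,0)$; otherwise the bounds stabilize, one fixes a single point $(u,w)$ in the cell, and the relative quantifier elimination for $\Gamma$ (governing the parametrized congruence conditions describing $v(E)$ near infinity and the conditions on $v(f_i(x))$) is used to show that $(x,w)$ remains in the cell for $x\to 0$, so $(0,w)$ lies in the closure. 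This last congruence argument is precisely where the Cluckers--Halupczok input does real work, and it is not visible in your sketch. If you replace your selection step by this cell-decomposition analysis, the rest of your outline goes through.
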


\begin{remark}
Not all valued fields $K$ have an angular component map, but it
exists if $K$ has a cross section, which happens whenever $K$ is
$\aleph_{1}$-saturated (cf.~\cite[Chap.~II]{Ch}). Moreover, a
valued field $K$ has an angular component map whenever its residue
field $\Bbbk$ is $\aleph_{1}$-saturated
(cf.~\cite[Corollary~1.6]{Pa2}). In general, unlike for $p$-adic
fields and their finite extensions, adding an angular component
map does strengthen the family of definable sets. Since the
$K$-topology is definable in the language of valued fields, the
closedness theorem is a first order property. Therefore it is
valid over arbitrary Henselian valued fields of equicharacteristic
zero, because it can be proven using saturated elementary
extensions, thus assuming that an angular component map exists.
\end{remark}

Two basic tools applied in this paper are quantifier elimination
for Henselian valued fields (along with preparation cell
decomposition) due to Pas~\cite{Pa1} and relative quantifier
elimination for ordered abelian groups (in a many-sorted language
with imaginary auxiliary sorts) due to
Cluckers--Halupczok~\cite{C-H}. In the case where the ground field
$K$ is of rank one, Theorem~\ref{clo-th} was established in our
paper~\cite[Section~7]{Now2}, where instead we applied simply
quantifier elimination for ordered abelian groups in the
Presburger language. Of course, when $K$ is a locally compact
field, it holds by a routine topological argument.

\vspace{1ex}

As before, our approach relies on the local behavior of definable
functions of one variable and the so-called fiber shrinking, being
a relaxed version of curve selection. Over arbitrary Henselian
valued fields, the former result will be established in Section~5,
and the latter in Section~6. Now, however, in the proofs of fiber
shrinking (Proposition~\ref{FS}) and the closedness theorem
(Theorem~\ref{clo-th}), we also apply relative quantifier
elimination for ordered abelian groups, due to
Cluckers--Halupczok~\cite{C-H}. It will be recalled in Section~7.

\vspace{1ex}

Section~2 contains a version of the implicit function theorem
(Proposition~\ref{implicit}). In the next section, we provide a
version of the Artin--Mazur theorem on algebraic power series
(Proposition~\ref{A-M}). Consequently, every algebraic power
series over $K$ determines a unique continuous function which is
definable in the language of valued fields. Section~4 presents
certain versions of the theorems of Abhyankar--Jung
(Proposition~\ref{A-J}) and Newton-Puiseux (Proposition~\ref{N-P})
for Henselian subalgebras of formal power series which are closed
under power substitution and division by a coordinate, given in
our paper~\cite{Now1} (see also~\cite{P-R}). In Section~5, we use
the foregoing results in analysis of functions of one variable,
definable in the language of Denef--Pas, to establish a theorem on
existence of the limit (Theorem~\ref{limit-th}).

\vspace{1ex}

The closedness theorem will allow us to establish several results
as for instance: piecewise continuity of definable functions
(Section~9), certain non-archimedean versions of curve selection
(Section~10) and of the \L{}ojasiewicz inequality with a direct
consequence, H\"{o}lder continuity of definable functions on
closed bounded subsets of $K^{n}$ (Section~11) as well as
extending hereditarily rational functions (Section~12) and the
theory of regulous functions, sets and sheaves, including
Nullstellensatz and Cartan's theorems A and B (Section~12). Over
rank one valued fields, these results (except piecewise and
H\"{o}lder continuity) were established in our paper~\cite{Now2}.
The theory of hereditarily rational functions on the real and
$p$-adic varieties was developed in the joint paper~\cite{K-N}.
Yet another application of the closedness theorem is the existence
of definable retractions onto closed definable subsets of $K^{n}$
and a definable, non-Archimedean version of the Tietze--Urysohn
extension theorem. These results are established for the algebraic case and 
for Henselian fields with analytic structure in our recent papers~\cite{Now-7,Now-8,Now-9}. 
It is very plausible that they will also hold in the more general case of axiomatically based structures 
on Henselian valued fields.

\vspace{1ex}

The closedness theorem immediately yields five corollaries stated
below. Corollaries~\ref{clo-th-cor-3}
and~\ref{clo-th-cor-4}, enable application of resolution of
singularities and of transformation to a simple normal crossing by
blowing up (cf.~\cite[Chap.~III]{Kol} for references and
relatively short proofs) in much the same way as over locally
compact ground fields.

\vspace{1ex}

\begin{corollary}\label{clo-th-cor-1}
Let $D$ be an $\mathcal{L}$-definable subset of $K^{n}$ and
$\,\mathbb{P}^{m}(K)$ stand for the projective space of dimension
$m$ over $K$. Then the canonical projection
$$ \pi: D \times \mathbb{P}^{m}(K) \longrightarrow D $$
is definably closed.
\end{corollary}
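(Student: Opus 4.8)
The plan is to reduce the projective case to the affine case already handled by Theorem \ref{clo-th} via the standard affine covering of $\mathbb{P}^m(K)$. First I would recall that $\mathbb{P}^m(K)$ is covered by the $m+1$ standard affine charts $U_i = \{[x_0 : \cdots : x_m] : x_i \neq 0\}$, each definably isomorphic to $K^m$; moreover, each $U_i$ contains the definable closed subset $V_i$ consisting of those points whose homogeneous coordinates can be normalized so that the $i$-th coordinate is $1$ and all remaining coordinates lie in $R$, i.e. $V_i \cong R^m$ under the chart isomorphism. The key elementary observation is that $\mathbb{P}^m(K) = \bigcup_{i=0}^m V_i$: given any point of $\mathbb{P}^m(K)$, choose a representative whose coordinate of minimal valuation equals $1$ (possible since the valuation ring $R$ contains all elements of nonnegative valuation), and then every other coordinate lies in $R$. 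Thus $\mathbb{P}^m(K)$ is a finite union of definable sets each of which is, in a definable chart, a copy of $R^m$.

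Next I would fix a definable closed subset $B \subset D \times \mathbb{P}^m(K)$ and aim to show $\pi(B)$ is closed in $D$. Write $B = \bigcup_{i=0}^m B_i$ where $B_i = B \cap (D \times V_i)$. Each $B_i$ is definable, but a priori only \emph{locally} closed in $D \times \mathbb{P}^m(K)$ since $V_i$ is closed in $\mathbb{P}^m(K)$; in fact $B_i = B \cap (D \times V_i)$ is closed in the closed set $D \times V_i$, hence closed in $D\times\mathbb{P}^m(K)$, so $B_i$ is a definable closed subset of $D \times V_i$. Transporting through the chart isomorphism $V_i \cong R^m$ (a homeomorphism in the $K$-topology, definable in $\mathcal{L}$), the set $B_i$ becomes a definable closed subset of $D \times R^m$, and its projection to $D$ agrees with $\pi(B_i)$. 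Theorem \ref{clo-th} now applies directly and gives that $\pi(B_i)$ is closed in $D$. Since $\pi(B) = \bigcup_{i=0}^m \pi(B_i)$ is a finite union of closed sets, it is closed in $D$, which is what we wanted.

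The only point requiring a little care — and the step I would flag as the main (minor) obstacle — is verifying that the standard charts $U_i \to K^m$ and the identifications $V_i \cong R^m$ are genuinely $\mathcal{L}$-definable and are homeomorphisms for the respective $K$-topologies, so that "closed" and "definable" are preserved in both directions; this is where one uses that $R$ is $v$-definable and that the product/quotient topology on $\mathbb{P}^m(K)$ is the one induced by these affine charts. Once that bookkeeping is in place, the argument is a purely formal patching of the affine statement, and the finiteness of the chart cover is exactly what makes the union of closed images closed. \hspace*{\fill} $\Box$
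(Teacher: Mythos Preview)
Your argument is correct and is precisely the standard reduction the paper leaves implicit (the paper offers no proof beyond the $\Box$, treating the corollary as immediate from Theorem~\ref{clo-th}). The covering $\mathbb{P}^{m}(K)=\bigcup_{i=0}^{m} V_{i}$ with each $V_{i}$ definably homeomorphic to $R^{m}$, followed by applying Theorem~\ref{clo-th} chartwise and taking the finite union, is exactly the intended bookkeeping; your flagged ``minor obstacle'' about definability of the charts and of $R$ is routine in the language $\mathcal{L}$.
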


\begin{corollary}\label{clo-th-cor-0}
Let $A$ be a closed $\mathcal{L}$-definable subset of
$\,\mathbb{P}^{m}(K)$ or $R^{m}$. Then every continuous
$\mathcal{L}$-definable map $f: A \to K^{n}$ is definably closed
in the $K$-topology.
\end{corollary}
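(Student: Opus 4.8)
The plan is to deduce this from the closedness theorem by passing to the (transposed) graph of $f$. Consider the canonical projection onto the first factor,
$$ \pi \colon K^{n} \times \mathbb{P}^{m}(K) \longrightarrow K^{n} \qquad \text{(resp. } \pi \colon K^{n} \times R^{m} \longrightarrow K^{n}\text{)}, $$
which is definably closed by Corollary~\ref{clo-th-cor-1} (resp. by Theorem~\ref{clo-th} applied with $D = K^{n}$).

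Let $F \subseteq A$ be an arbitrary $\mathcal{L}$-definable closed subset; since $A$ is closed in $\mathbb{P}^{m}(K)$ (resp. in $R^{m}$), the set $F$ is closed there too. I would then introduce the set
$$ \Gamma := \{\, (f(a), a) : a \in F \,\} \subseteq K^{n} \times \mathbb{P}^{m}(K) \qquad \text{(resp. } \subseteq K^{n} \times R^{m}\text{)}, $$
which is $\mathcal{L}$-definable because $f$ and $F$ are. The point to check is that $\Gamma$ is closed: the ordinary graph $\{\,(a, f(a)) : a \in A\,\}$ is closed in $A \times K^{n}$ because $f$ is continuous and the $K$-topology on $K^{n}$ is Hausdorff; intersecting with the closed set $F \times K^{n}$ and then using that $A \times K^{n}$ is closed in $\mathbb{P}^{m}(K) \times K^{n}$ (resp. in $R^{m} \times K^{n}$), one gets that $\{\,(a, f(a)) : a \in F\,\}$ is closed in the latter product; finally $\Gamma$ is its image under the coordinate-transposition homeomorphism, hence closed.

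Since $\pi(\Gamma) = f(F)$, the definable closedness of $\pi$ gives that $f(F)$ is closed in $K^{n}$. As $F$ was an arbitrary $\mathcal{L}$-definable closed subset of $A$, this shows that $f$ is definably closed in the $K$-topology, in both cases.

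The only substantive ingredient here is the definable closedness of $\pi$, i.e. the closedness theorem itself (through Corollary~\ref{clo-th-cor-1} in the projective case); everything else is a bookkeeping manipulation of graphs inside the appropriate ambient spaces, so I do not anticipate any genuine obstacle. If one wishes to be scrupulous, the only things to spell out are that $\mathbb{P}^{m}(K)$ is given its natural definable topology via the affine charts $\cong K^{m}$ on which Corollary~\ref{clo-th-cor-1} is phrased, and that the continuity and definability of $f$ make all the auxiliary sets above simultaneously closed and $\mathcal{L}$-definable, so the argument stays first-order.
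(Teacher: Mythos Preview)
Your proof is correct and is exactly the standard graph argument that the paper leaves implicit (the corollary is stated without proof in the paper, as an immediate consequence of Theorem~\ref{clo-th} and Corollary~\ref{clo-th-cor-1}). Your unpacking via the transposed graph $\Gamma \subset K^{n} \times \mathbb{P}^{m}(K)$ (resp.\ $K^{n} \times R^{m}$) and the definable closedness of the projection $\pi$ is precisely what is intended.
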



\begin{corollary}\label{clo-th-cor-2}
Let $\phi_{i}$, $i=0,\ldots,m$, be regular functions on $K^{n}$,
$D$ be an $\mathcal{L}$-definable subset of $K^{n}$ and $\sigma: Y
\longrightarrow K\mathbb{A}^{n}$ the blow-up of the affine space
$K\mathbb{A}^{n}$ with respect to the ideal
$(\phi_{0},\ldots,\phi_{m})$. Then the restriction
$$ \sigma: Y(K) \cap \sigma^{-1}(D) \longrightarrow D $$
is a definably closed quotient map.
\end{corollary}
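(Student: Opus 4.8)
The plan is to reduce the statement to Corollary \ref{clo-th-cor-1} by exploiting the concrete description of the blow-up of an affine space along the ideal generated by regular functions. First I would recall that $Y = \mathrm{Bl}_{(\phi_0,\dots,\phi_m)}(K\mathbb{A}^n)$ is by construction a closed subscheme of $K\mathbb{A}^n \times \mathbb{P}^m$, namely the closure of the graph of the rational map $x \mapsto (\phi_0(x):\cdots:\phi_m(x))$ defined away from the common zero locus $V(\phi_0,\dots,\phi_m)$. In particular $Y$ is cut out inside $K\mathbb{A}^n \times \mathbb{P}^m$ by the bihomogeneous equations $\phi_i T_j - \phi_j T_i = 0$ (for $0 \le i < j \le m$, with $[T_0:\cdots:T_m]$ the homogeneous coordinates on $\mathbb{P}^m$) together with the equations of the closure; consequently the set of $K$-points $Y(K) \subset K^n \times \mathbb{P}^m(K)$ is an $\mathcal{L}$-definable subset which is closed in the $K$-topology, since polynomial conditions define closed sets and $\mathbb{P}^m(K)$ carries its (Hausdorff, compact-like) valuation topology.

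Next I would observe that the structure morphism $\sigma\colon Y \to K\mathbb{A}^n$ is simply the restriction to $Y(K)$ of the canonical projection $\mathrm{pr}\colon K^n \times \mathbb{P}^m(K) \to K^n$. Now take any $\mathcal{L}$-definable subset $D \subseteq K^n$ and any $\mathcal{L}$-definable set $F \subseteq Y(K) \cap \sigma^{-1}(D)$ that is closed in the subspace topology of $Y(K)\cap \sigma^{-1}(D)$. The delicate point is that closedness is relative to this subspace, not to all of $K^n \times \mathbb{P}^m(K)$; so I would first treat the case $D = K^n$. There $F$ is closed in $Y(K)$, hence closed in $K^n \times \mathbb{P}^m(K)$ because $Y(K)$ itself is closed there by the previous paragraph; Corollary \ref{clo-th-cor-1} then gives that $\mathrm{pr}(F) = \sigma(F)$ is definably closed in $K^n$. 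For general $D$, one writes $F$ as $\overline{F} \cap (Y(K)\cap \sigma^{-1}(D))$ with $\overline{F}$ its closure in $Y(K)$, notes $\sigma(\overline{F})$ is definably closed in $K^n$ by the case just done, and checks that $\sigma(F) = \sigma(\overline{F}) \cap D$ using that $\sigma$ restricted over $D$ has the point-lifting property coming from properness of blow-ups — i.e. every point of $D$ has a preimage in $Y(K)$; this yields that $\sigma\colon Y(K)\cap\sigma^{-1}(D) \to D$ is definably closed.

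Finally, for the "quotient map" assertion, I would argue that $\sigma$ is surjective onto $D$ (again by properness/point-lifting of the blow-up, every $x \in K^n$ lies in the image, in fact $\sigma$ is an isomorphism over $K\mathbb{A}^n \setminus V(\phi_0,\dots,\phi_m)$ and surjective over $V$), and that a continuous, surjective, definably closed map is automatically a topological quotient map in the definable category: a definable set $U \subseteq D$ whose preimage is open (equivalently, whose complement has closed preimage) is itself open, because $D \setminus U = \sigma(\sigma^{-1}(D \setminus U))$ is definably closed. The main obstacle I anticipate is purely the bookkeeping around \emph{relative} closedness over the definable base $D$ versus absolute closedness in the ambient product; once the identification $Y(K) \subseteq K^n \times \mathbb{P}^m(K)$ as a closed definable set is in hand, Corollary \ref{clo-th-cor-1} does all the real work, and no new use of the closedness theorem beyond that corollary is needed.
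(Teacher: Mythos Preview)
Your proposal is correct and follows exactly the paper's approach: regard $Y(K)$ as a closed algebraic (hence $\mathcal{L}$-definable) subset of $K^{n}\times\mathbb{P}^{m}(K)$, identify $\sigma$ with the restriction of the canonical projection, and invoke Corollary~\ref{clo-th-cor-1}. Your anticipated ``main obstacle'' about relative closedness over $D$ is in fact no obstacle: Corollary~\ref{clo-th-cor-1} is already formulated for the projection $D\times\mathbb{P}^{m}(K)\to D$ over an arbitrary definable $D$, so if $F$ is closed in $Y(K)\cap\sigma^{-1}(D)=Y(K)\cap\bigl(D\times\mathbb{P}^{m}(K)\bigr)$ it is automatically closed in $D\times\mathbb{P}^{m}(K)$ (because $Y(K)$ is closed in the ambient product), and the corollary gives closedness of $\sigma(F)$ in $D$ directly --- the detour through the case $D=K^{n}$ and the set $\overline{F}$ can be dropped.
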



\begin{proof} Indeed,  $Y(K)$ can be regarded as a closed algebraic subvariety of
$K^{n} \times \mathbb{P}^{m}(K)$ and $\sigma$ as the canonical
projection.
\end{proof}

\begin{corollary}\label{clo-th-cor-3}
Let $X$ be a smooth $K$-variety, $D$ be an $\mathcal{L}$-definable
subset of $X(K)$ and $\sigma: Y \longrightarrow X$ the blow-up
along a smooth center. Then the restriction
$$ \sigma: Y(K) \cap \sigma^{-1}(D) \longrightarrow D $$
is a definably closed quotient map.
\end{corollary}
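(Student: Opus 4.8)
The plan is to reduce the assertion, by a local computation on the base, to Corollary~\ref{clo-th-cor-1} (equivalently, Corollary~\ref{clo-th-cor-2}), which already handles the case of a trivial projective bundle. First I would dispose of the formal part: since $\sigma\colon Y\to X$ is an isomorphism over the open set $X\setminus Z$, where $Z$ denotes the smooth center, while the fibre of $\sigma$ over any $K$-point of $Z$ is the projectivization of a $K$-vector space, hence a projective space over $K$ and so has $K$-points, the induced map $\sigma\colon Y(K)\to X(K)$ is surjective, and therefore so is $\sigma\colon Y(K)\cap\sigma^{-1}(D)\to D$. This map is continuous, being the restriction of a morphism of $K$-varieties, so once it is shown to be definably closed it is automatically a definable quotient map: for definable $S\subseteq D$ the set $\sigma^{-1}(S)$ is definable, and if $\sigma^{-1}(S)$ is closed then $S=\sigma(\sigma^{-1}(S))$ is closed by definable closedness together with surjectivity, the reverse implication being continuity. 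So the whole content of the corollary is that $\sigma$ is definably closed.

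Next I would exploit that definable closedness of a map is local on the target. Cover $X$ by Zariski-open affine subsets $V$; each $V(K)$ is open in $X(K)$ for the $K$-topology, so the sets $D\cap V$ form an open cover of $D$, and for a closed definable $B\subseteq Y(K)\cap\sigma^{-1}(D)$ one has $\sigma(B)\cap(D\cap V)=\sigma\bigl(B\cap\sigma^{-1}(V)\bigr)$; hence it suffices to prove that each restriction $\sigma\colon Y(K)\cap\sigma^{-1}(D\cap V)\to D\cap V$ is definably closed. Fix such a $V$. The ideal of $Z\cap V$ in the Noetherian ring $\mathcal{O}(V)$ is generated by finitely many regular functions $\phi_{0},\dots,\phi_{m}$, and by definition $\sigma^{-1}(V)=\operatorname{Bl}_{Z\cap V}V$ is the blow-up of $V$ along the ideal $(\phi_{0},\dots,\phi_{m})$; the surjection of graded $\mathcal{O}(V)$-algebras $\mathcal{O}(V)[T_{0},\dots,T_{m}]\twoheadrightarrow\bigoplus_{k\ge 0}(\phi_{0},\dots,\phi_{m})^{k}$ that sends each $T_{i}$ to $\phi_{i}$ in degree one exhibits this blow-up as a closed subvariety of $V\times\mathbb{P}^{m}$ on which $\sigma$ is the restriction of the first projection. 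Embedding the affine $V$ as a closed subvariety of some $K\mathbb{A}^{N}$, I would then conclude that $Y(K)\cap\sigma^{-1}(V)$ is a Zariski-closed — hence $K$-closed and $\mathcal{L}$-definable — subset of $K^{N}\times\mathbb{P}^{m}(K)$ on which $\sigma$ is the canonical projection; therefore $Y(K)\cap\sigma^{-1}(D\cap V)$ is a closed definable subset of $D'\times\mathbb{P}^{m}(K)$, where $D'\subseteq K^{N}$ is the image of $D\cap V$. By Corollary~\ref{clo-th-cor-1} the projection $D'\times\mathbb{P}^{m}(K)\to D'$ is definably closed, so $\sigma\bigl(B\cap\sigma^{-1}(V)\bigr)$ is closed in $D\cap V$; this finishes the local step and the proof.

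I do not expect any step to be deep once the closedness theorem is in hand; the real work is the geometric bookkeeping — checking that definable closedness is local on the target and that Zariski-open affine charts are $K$-open, presenting the blow-up along the (smooth, indeed arbitrary) center over an affine chart as the blow-up along an explicit finitely generated ideal so that it embeds into a trivial $\mathbb{P}^{m}$-bundle, and then quoting Corollary~\ref{clo-th-cor-1} (or~\ref{clo-th-cor-2}) verbatim. The only point where smoothness of the center is genuinely used is in securing surjectivity of $\sigma$ on $K$-points, hence the quotient-map assertion.
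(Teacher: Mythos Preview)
Your proposal is correct and is precisely the argument the paper has in mind: the corollary is stated with a bare $\Box$ because it is meant to follow immediately from Corollary~\ref{clo-th-cor-2} by passing to affine charts of $X$, exactly as you do. You have simply made explicit the routine bookkeeping (locality of definable closedness on the target, the closed embedding of the blow-up into $V\times\mathbb{P}^{m}$, and surjectivity on $K$-points for the quotient-map part) that the paper leaves to the reader.
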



\begin{corollary}\label{clo-th-cor-4} (Descent property)
Under the assumptions of the above corollary, every continuous
$\mathcal{L}$-definable function
$$ g: Y(K) \cap \sigma^{-1}(D) \longrightarrow K $$
that is constant on the fibers of the blow-up $\sigma$ descends to
a (unique) continuous $\mathcal{L}$-definable function $f: D
\longrightarrow K$.
\end{corollary}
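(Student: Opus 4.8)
The plan is to reduce everything to Corollary~\ref{clo-th-cor-3}, which already asserts that the restricted blow-up
$$ \sigma: Y(K) \cap \sigma^{-1}(D) \longrightarrow D $$
is a definably closed quotient map in the $K$-topology; granting that, the content of Corollary~\ref{clo-th-cor-4} is essentially formal. First I would construct $f$ on the level of sets: since $g$ is constant on the fibers of $\sigma$ and $\sigma$ is surjective onto $D$, there is exactly one function $f: D \to K$ with $f \circ \sigma = g$, namely $f(x)$ is the common value of $g$ on $\sigma^{-1}(x)$. Uniqueness of such an $f$ follows at once from surjectivity of $\sigma$, which disposes of the parenthetical ``(unique)''.

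Next, $\mathcal{L}$-definability of $f$. Its graph is
$$ \{\, (x,t) \in D \times K : \exists\, y \in Y(K) \cap \sigma^{-1}(D) \ \text{with}\ \sigma(y) = x \ \text{and}\ g(y) = t \,\}, $$
that is, the image of the graph of $g$ under the $\mathcal{L}$-definable map $(\sigma, \mathrm{id}_{K})$. Being the $\mathcal{L}$-definable image of an $\mathcal{L}$-definable set, this graph is $\mathcal{L}$-definable, and hence so is $f$.

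Finally, continuity of $f$, which is the only step that actually uses the closedness theorem, through Corollary~\ref{clo-th-cor-3}. Since $K$ has a basis of $\mathcal{L}$-definable open balls, it suffices to show that $f^{-1}(V)$ is open in $D$ for every $\mathcal{L}$-definable open ball $V \subseteq K$, equivalently that $D \setminus f^{-1}(V)$ is closed in $D$. Now
$$ \sigma^{-1}\bigl(D \setminus f^{-1}(V)\bigr) = \bigl(Y(K) \cap \sigma^{-1}(D)\bigr) \setminus g^{-1}(V), $$
which is an $\mathcal{L}$-definable closed subset of $Y(K) \cap \sigma^{-1}(D)$ because $g$ is continuous. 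By Corollary~\ref{clo-th-cor-3} its image under $\sigma$ is closed in $D$, and that image equals $D \setminus f^{-1}(V)$ by surjectivity of $\sigma$. Hence $f^{-1}(V)$ is open in $D$, so $f$ is continuous.

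The one point that is not purely formal --- and which is precisely the input furnished by Corollary~\ref{clo-th-cor-3}, hence by Theorem~\ref{clo-th} --- is that $\sigma$ restricted to $Y(K) \cap \sigma^{-1}(D)$ really is a closed map onto $D$. Without definable closedness of the projections $\mathbb{P}^{m}(K) \times K^{n} \to K^{n}$, the induced $f$ need not be continuous, since the $K$-topology on $\mathcal{L}$-definable sets is not locally compact in general and there is no elementary compactness substitute. Once that input is granted, no further obstacle remains, and the descent property follows.
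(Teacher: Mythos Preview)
Your proposal is correct and is precisely the argument the paper has in mind: the paper gives no explicit proof (the statement is followed directly by $\Box$), treating the descent as an immediate formal consequence of the fact, recorded in Corollary~\ref{clo-th-cor-3}, that $\sigma$ is a definably closed quotient map. Your write-up simply makes explicit the standard passage from ``closed surjective continuous map'' to ``functions constant on fibers descend to continuous functions,'' together with the routine check of $\mathcal{L}$-definability.
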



\section{Some versions of the implicit function theorem}

In this section, we give elementary proofs of some versions of the
inverse mapping and implicit function theorems; cf.~the versions
established in the papers \cite[Theorem~7.4]{P-Z},
\cite[Section~9]{G-Pop-Roq}, \cite[Section~4]{Kuhl} and
\cite[Proposition~3.1.4]{G-G-MB}. We begin with a simplest version
(H) of Hensel's lemma in several variables, studied by
Fisher~\cite{Fish}. Given an ideal $\mathfrak{m}$ of a ring $R$,
let $\mathfrak{m}^{\times n}$ stand for the $n$-fold Cartesian
product of $\mathfrak{m}$ and $R^{\times}$ for the set of units of
$R$. The origin $(0,\ldots,0) \in R^{n}$ is denoted by
$\mathbf{0}$.

\vspace{1ex}

\begin{em}
{\bf (H)} Assume that a ring $R$ satisfies Hensel's conditions
(i.e.\  it is linearly topologized, Hausdorff and complete) and
that an ideal $\mathfrak{m}$ of $R$ is closed. Let $f = (f_{1},
\ldots, f_{n})$ be an $n$-tuple of restricted power series $f_{1},
\ldots, f_{n} \in R\{ X \}$, $X = (X_{1},\ldots,X_{n})$, $J$ be
its Jacobian determinant and $a \in R^{n}$. If $f(\mathbf{0}) \in
\mathfrak{m}^{\times n}$ and $J(\mathbf{0}) \in R^{\times}$, then
there is a unique $a \in \mathfrak{m}^{\times n}$ such that $f(a)
= \mathbf{0}$.
\end{em}

\begin{proposition}\label{H-1}
Under the above assumptions, $f$ induces a bijection
$$ \mathfrak{m}^{\times n} \ni x \longrightarrow f(x) \in \mathfrak{m}^{\times n} $$
 of $\mathfrak{m}^{\times n}$ onto itself.
\end{proposition}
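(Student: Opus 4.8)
The plan is to deduce Proposition~\ref{H-1} from the already-stated Hensel-type result (H), using it not just at the origin but at an arbitrary prescribed target value in $\mathfrak{m}^{\times n}$. Fix $b \in \mathfrak{m}^{\times n}$; we want a unique $x \in \mathfrak{m}^{\times n}$ with $f(x) = b$. The natural move is to apply (H) to the modified $n$-tuple $g := f - b$, i.e.\ $g = (f_{1} - b_{1}, \ldots, f_{n} - b_{n})$. First I would check that $g$ is again an $n$-tuple of restricted power series in $R\{X\}$: this is immediate since each $b_{i} \in \mathfrak{m} \subset R$ is a constant, hence a (trivially restricted) element of $R\{X\}$, and $R\{X\}$ is a ring. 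Next, $g(\mathbf{0}) = f(\mathbf{0}) - b$; since $f(\mathbf{0}) \in \mathfrak{m}^{\times n}$ by hypothesis and $b \in \mathfrak{m}^{\times n}$, and $\mathfrak{m}$ is an ideal (in particular closed under subtraction), we get $g(\mathbf{0}) \in \mathfrak{m}^{\times n}$. Crucially, the Jacobian determinant of $g$ equals that of $f$, because subtracting constants does not change any partial derivative; hence the Jacobian of $g$ at $\mathbf{0}$ is $J(\mathbf{0}) \in R^{\times}$. So (H) applies to $g$ and yields a unique $x \in \mathfrak{m}^{\times n}$ with $g(x) = \mathbf{0}$, i.e.\ $f(x) = b$.

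This simultaneously gives surjectivity of the induced map $\mathfrak{m}^{\times n} \to \mathfrak{m}^{\times n}$, $x \mapsto f(x)$ (every $b$ is hit), and injectivity (the preimage of each $b$ is a singleton by the uniqueness clause of (H)). One bookkeeping point worth addressing explicitly is that the map does land in $\mathfrak{m}^{\times n}$ in the first place, i.e.\ that $f(x) \in \mathfrak{m}^{\times n}$ whenever $x \in \mathfrak{m}^{\times n}$; this follows because $f_{i}(x) - f_{i}(\mathbf{0})$ lies in the ideal generated by the coordinates of $x$, hence in $\mathfrak{m}$ (using that $\mathfrak{m}$ is closed, so that the relevant convergent sums stay in $\mathfrak{m}$), and $f_{i}(\mathbf{0}) \in \mathfrak{m}$ by hypothesis. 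So the statement "$f$ induces a map $\mathfrak{m}^{\times n} \to \mathfrak{m}^{\times n}$" is itself part of what must be verified, but it is routine.

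I do not anticipate a genuine obstacle here: the proposition is essentially a repackaging of (H), with the only mild subtlety being the verification that translation by $b$ preserves all the hypotheses of (H) — membership in $R\{X\}$, the value of $g(\mathbf{0})$ lying in $\mathfrak{m}^{\times n}$, and invariance of the Jacobian. The last of these is the "point" of the argument and is trivial once observed. If anything needs care, it is making precise the convergence/closedness argument showing $f(\mathfrak{m}^{\times n}) \subseteq \mathfrak{m}^{\times n}$, which relies on $\mathfrak{m}$ being a closed ideal in the linearly topologized complete ring $R$ — exactly the hypothesis bundled into "Hensel's conditions". Accordingly I would phrase the proof in two or three sentences: apply (H) to $f - b$ for each $b \in \mathfrak{m}^{\times n}$, note the Jacobian is unchanged, and read off bijectivity from the existence-and-uniqueness conclusion.
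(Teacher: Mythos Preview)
Your proposal is correct and follows exactly the paper's approach: the paper's proof is the single sentence ``For any $y \in \mathfrak{m}^{\times n}$, apply condition (H) to the restricted power series $f(X) - y$.'' Your additional bookkeeping (that translation by a constant preserves membership in $R\{X\}$, leaves the Jacobian unchanged, and that $f(\mathfrak{m}^{\times n}) \subset \mathfrak{m}^{\times n}$) is all sound and simply fleshes out what the paper leaves implicit.
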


\begin{proof}
For any $y \in\mathfrak{m}^{\times n}$, apply condition (H) to the
restricted power series $f(X) - y$.
\end{proof}

If, moreover, the pair $(R,\mathfrak{m})$ satisfies Hensel's
conditions (i.e.\ every element of $\mathfrak{m}$ is topologically
nilpotent), then condition (H) holds by \cite[Chap.~III, \S
4.5]{Bour}.

\begin{remark}\label{rem-char}
Henselian local rings can be characterized both by the classical
Hensel lemma and by condition (H): a local ring $(R,\mathfrak{m})$
is Henselian iff $(R,\mathfrak{m})$ with the discrete topology
satisfies condition (H) (cf.~~\cite[Proposition~2]{Fish}).
\end{remark}

Now consider a Henselian local ring $(R,\mathfrak{m})$. Let $f =
(f_{1}, \ldots, f_{n})$ be an $n$-tuple of polynomials $f_{1},
\ldots, f_{n} \in R[ X ]$, $X = (X_{1},\ldots,X_{n})$ and $J$ be
its Jacobian determinant.

\begin{corollary}\label{H-2}
Suppose that $f(\mathbf{0}) \in \mathfrak{m}^{\times n}$ and
$J(\mathbf{0}) \in R^{\times}$. Then $f$ is a homeomorphism of
$\mathfrak{m}^{\times n}$ onto itself in the $\mathfrak{m}$-adic
topology. If, in addition, $R$ is a Henselian valued ring with
maximal ideal $\mathfrak{m}$, then $f$ is a homeomorphism of
$\mathfrak{m}^{\times n}$ onto itself in the valuation topology.
\end{corollary}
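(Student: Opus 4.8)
The plan is to deduce the corollary directly from Proposition~\ref{H-1} (equivalently, condition (H)) via Remark~\ref{rem-char}. First I would observe that a Henselian local ring $(R,\mathfrak{m})$ equipped with the $\mathfrak{m}$-adic topology need not be complete, so one cannot apply condition~(H) to $R$ itself; instead I would pass to the $\mathfrak{m}$-adic completion $\widehat{R}$ with maximal ideal $\widehat{\mathfrak{m}} = \mathfrak{m}\widehat{R}$. Since polynomials have only finitely many coefficients, the tuple $f \in R[X]^n$ can be regarded as a tuple of restricted power series over $\widehat{R}$, its Jacobian determinant is unchanged, and the hypotheses $f(\mathbf{0}) \in \mathfrak{m}^{\times n} \subset \widehat{\mathfrak{m}}^{\times n}$ and $J(\mathbf{0}) \in R^{\times} \subset \widehat{R}^{\times}$ persist. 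Applying Proposition~\ref{H-1} over $\widehat{R}$ gives a bijection $\widehat{\mathfrak{m}}^{\times n} \to \widehat{\mathfrak{m}}^{\times n}$, $x \mapsto f(x)$.

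Next I would show this bijection restricts to a bijection of $\mathfrak{m}^{\times n}$ onto itself. One inclusion is clear: $f(\mathfrak{m}^{\times n}) \subset \mathfrak{m}^{\times n}$ since $f(\mathbf{0}) \in \mathfrak{m}^{\times n}$ and the higher-order terms of each $f_i$ lie in $\mathfrak{m}$ when evaluated on $\mathfrak{m}^{\times n}$. For surjectivity onto $\mathfrak{m}^{\times n}$, given $y \in \mathfrak{m}^{\times n}$, the unique preimage $a \in \widehat{\mathfrak{m}}^{\times n}$ with $f(a) = y$ is in fact the unique solution in $\widehat{\mathfrak{m}}^{\times n}$ of the polynomial system $f(X) - y = \mathbf{0}$ with $a \equiv \mathbf{0} \pmod{\widehat{\mathfrak{m}}}$; but since $(R,\mathfrak{m})$ is Henselian, condition~(H) holds for $R$ with the discrete topology (Remark~\ref{rem-char}), so there is already a solution $a' \in \mathfrak{m}^{\times n}$, and by the uniqueness part applied over $\widehat{R}$ we get $a = a' \in \mathfrak{m}^{\times n}$. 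Thus $f : \mathfrak{m}^{\times n} \to \mathfrak{m}^{\times n}$ is a bijection.

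It remains to prove bicontinuity. Continuity of $f$ is immediate: each $f_i$ is a polynomial, and polynomial maps are continuous for the $\mathfrak{m}$-adic topology because $f_i(x) - f_i(x') \in \mathfrak{m}^k$ whenever $x - x' \in (\mathfrak{m}^k)^{\times n}$ (Taylor expansion). For continuity of $f^{-1}$, the key point is that $f$ is open, or equivalently that $f^{-1}$ is continuous at every point; I would argue that for any $k$, $f$ maps $(\mathfrak{m}^k)^{\times n}$-cosets to $(\mathfrak{m}^k)^{\times n}$-cosets bijectively. Concretely, fix $a \in \mathfrak{m}^{\times n}$ and consider the shifted system $g(X) := f(a + X) - f(a)$; then $g(\mathbf{0}) = \mathbf{0}$ and $g$ has Jacobian determinant $J(a) \equiv J(\mathbf{0}) \in R^{\times} \pmod{\mathfrak{m}}$, hence a unit, so by the same argument $g$ is a bijection of $\mathfrak{m}^{\times n}$ onto itself; moreover the linear part of $g$ is the invertible matrix $Dg(\mathbf{0})$ over $R/\mathfrak{m}$, which combined with the fact that the higher-order terms of $g$ raise $\mathfrak{m}$-adic order shows $g\big((\mathfrak{m}^k)^{\times n}\big) = (\mathfrak{m}^k)^{\times n}$ for every $k \geq 1$. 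Translating back, $f$ maps the neighborhood $a + (\mathfrak{m}^k)^{\times n}$ onto the neighborhood $f(a) + (\mathfrak{m}^k)^{\times n}$, which gives continuity of $f^{-1}$. For the final assertion, when $R$ is a Henselian valued ring with maximal ideal $\mathfrak{m}$, the valuation topology on $\mathfrak{m}$ coincides with the $\mathfrak{m}$-adic topology on bounded sets — more precisely the sets $(\mathfrak{m}^k)^{\times n}$ form a neighborhood basis of $\mathbf{0}$ in the valuation topology restricted to $\mathfrak{m}^{\times n}$ — so the homeomorphism statement transfers verbatim.

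The main obstacle I anticipate is the openness/bicontinuity step: unlike the bijectivity, which is a clean application of Hensel's lemma, showing that $f$ sends the standard basic neighborhoods $(\mathfrak{m}^k)^{\times n}$ (around an arbitrary point, via the translation trick) exactly onto basic neighborhoods requires carefully combining the invertibility of the Jacobian mod $\mathfrak{m}$ with the order-raising property of the nonlinear terms, and checking that this descends correctly from the completion. The subtlety about $R$ not being complete — forcing the detour through $\widehat{R}$ while keeping all solutions inside $R$ by the Henselian property — is the other place where care is needed.
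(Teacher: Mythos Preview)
Your argument for bijectivity and for the $\mathfrak{m}$-adic homeomorphism is essentially correct and close to the paper's, though the detour through the completion $\widehat{R}$ is unnecessary: by Remark~\ref{rem-char}, condition~(H) already holds for the Henselian pair $(R,\mathfrak{m})$ with the discrete topology, so Proposition~\ref{H-1} applies to $R$ itself. The paper compresses your translation idea into the single identity
\[
f(a+x)-f(a)=\mathcal{M}(a)\cdot\bigl(x+\mathcal{M}(a)^{-1}g(x)\bigr),\qquad g\in (X)^{2}R[X]^{n},
\]
with $\mathcal{M}(a)\in\mathrm{GL}_{n}(R)$, from which bijectivity and bicontinuity follow at once.

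There is, however, a genuine gap in your treatment of the valuation topology. Your claim that the sets $(\mathfrak{m}^{k})^{\times n}$ form a neighborhood basis of $\mathbf{0}$ for the valuation topology on $\mathfrak{m}^{\times n}$ is false in general. If the value group $\Gamma$ is not discrete (e.g.\ $\Gamma=\mathbb{Q}$, as for the field of Puiseux series), then $\mathfrak{m}^{2}=\mathfrak{m}$, so the $\mathfrak{m}$-adic filtration is constant and the $\mathfrak{m}$-adic topology on $\mathfrak{m}$ is indiscrete, whereas the valuation topology is Hausdorff. Hence the two topologies do \emph{not} agree, and your transfer ``verbatim'' does not go through. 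The correct argument, which is what the paper's formula gives, works directly in the valuation topology: since $\mathcal{M}(a)\in\mathrm{GL}_{n}(R)$, left multiplication by $\mathcal{M}(a)$ preserves $\min_{i} v(y_{i})$; and since $g\in (X)^{2}R[X]^{n}$, one has $v(g_{i}(x))\geq 2\min_{j} v(x_{j})$, so for $x$ with $\min_{j} v(x_{j})$ large the map $x\mapsto x+\mathcal{M}(a)^{-1}g(x)$ is a small perturbation of the identity in the valuation sense. This yields bicontinuity for the valuation topology without any comparison to the $\mathfrak{m}$-adic one.
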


\begin{proof}
Obviously, $J(a) \in R^{\times}$ for every $a \in
\mathfrak{m}^{\times n}$. Let $\mathcal{M}$ be the jacobian matrix
of $f$. Then
$$ f(a + x) - f(a) = \mathcal{M}(a) \cdot x + g(x) = \mathcal{M}(a)
   \cdot (x + \mathcal{M}(a)^{-1} \cdot g(x)) $$
for an $n$-tuple $g = (g_{1},\ldots,g_{n})$ of polynomials
$g_{1},\ldots,g_{n} \in (X)^{2} R[X]$. Hence the assertion follows
easily.
\end{proof}

The proposition below is a version of the inverse mapping theorem.

\begin{proposition}\label{H-3}
If $f(\mathbf{0}) = \mathbf{0}$ and $e :=J(\mathbf{0}) \neq 0$,
then $f$ is an open embedding of $e \cdot \mathfrak{m}^{\times n}$
onto $e^{2} \cdot \mathfrak{m}^{\times n}$.
\end{proposition}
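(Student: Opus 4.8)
The plan is to bring $f$ into the setting of Corollary~\ref{H-2} --- a map whose Jacobian determinant at the origin is a unit --- by a rescaling substitution combined with an adjugate twist, and then to transport the resulting homeomorphism back to $f$. Write $\mathcal{M}$ for the Jacobian matrix of $f$ and put $N := \mathcal{M}(\mathbf{0})$, so that $e = \det N$. Exactly as in the proof of Corollary~\ref{H-2}, expanding at the origin and using $f(\mathbf{0}) = \mathbf{0}$ gives $f(X) = N X + g(X)$ with every component of $g = (g_{1},\ldots,g_{n})$ lying in $(X)^{2} R[X]$. Let $B := \operatorname{adj}(N)$ be the adjugate matrix, so that $NB = BN = e\cdot\operatorname{Id}_{n}$; since $\det B = e^{n-1} \neq 0$, both $N$ and $B$ act as linear automorphisms --- hence homeomorphisms --- of $K^{n}$ over the fraction field $K$ of $R$, with $B^{-1} = e^{-1}N$.

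First I would substitute $X = eY$. As each monomial occurring in $g$ has degree $\geq 2$, it acquires a factor $e^{|\alpha|} = e^{2}\cdot e^{|\alpha|-2}$, so a short computation gives
$$ f(eY) \;=\; eNY + g(eY) \;=\; e\bigl(NY + e\,\gamma(Y)\bigr), $$
where $\gamma = (\gamma_{1},\ldots,\gamma_{n})$ again has components in $(Y)^{2} R[Y]$. Composing on the left with $B$ and using $BN = e\cdot\operatorname{Id}_{n}$ then yields
$$ B\,f(eY) \;=\; e\bigl(eY + e\,B\gamma(Y)\bigr) \;=\; e^{2}\bigl(Y + B\gamma(Y)\bigr) \;=\; e^{2}F(Y), $$
where $F := \operatorname{id} + B\gamma$ is a polynomial self-map of $R^{n}$. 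Since $F(\mathbf{0}) = \mathbf{0}$ and $\gamma$ vanishes to order $\geq 2$, the Jacobian matrix of $F$ at the origin is $\operatorname{Id}_{n}$, hence its Jacobian determinant there is $1 \in R^{\times}$; by Corollary~\ref{H-2}, $F$ is a homeomorphism of $\mathfrak{m}^{\times n}$ onto itself.

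It remains to transport this back to $f$. Multiplication by $e$, respectively $e^{2}$, is a homeomorphism of $\mathfrak{m}^{\times n}$ onto the open set $e\cdot\mathfrak{m}^{\times n}$, respectively $e^{2}\cdot\mathfrak{m}^{\times n}$ (here $R$ is a domain and $e \neq 0$). Hence the map $eY \mapsto e^{2}F(Y)$, which by the second display is exactly $B\circ f$ restricted to $e\cdot\mathfrak{m}^{\times n}$, is an open embedding of $e\cdot\mathfrak{m}^{\times n}$ onto $e^{2}\cdot\mathfrak{m}^{\times n}$; in particular $f$ is injective on $e\cdot\mathfrak{m}^{\times n}$, since $B$ is injective on $R^{n}$. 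Composing with the linear homeomorphism $B^{-1} = e^{-1}N$ of $K^{n}$ should then convert this open embedding of $B\circ f$ into the open embedding of $f$ asserted in the proposition.

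The step I expect to be the main obstacle is precisely this last transport. Because $e$ need not be a unit of $R$, the balls $e\cdot\mathfrak{m}^{\times n}$ and $e^{2}\cdot\mathfrak{m}^{\times n}$ are proper subsets of $\mathfrak{m}^{\times n}$, and one has to follow their sizes carefully through the substitution $X = eY$, the adjugate twist, and the action of $B^{-1}$ in order to verify that $f$ carries $e\cdot\mathfrak{m}^{\times n}$ exactly --- and homeomorphically --- onto $e^{2}\cdot\mathfrak{m}^{\times n}$, rather than merely into or onto some comparable ball. This is the point at which the normalization $f(\mathbf{0}) = \mathbf{0}$ and the precise order of vanishing of the higher-order part $g$ really enter; everything else is the routine Hensel-type bijection already supplied by Corollary~\ref{H-2}.
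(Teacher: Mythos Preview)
Your approach is essentially the paper's: both rescale via $X = eY$, bring in the adjugate $B = \mathcal{N}$ of $\mathcal{M}(\mathbf{0})$, and reduce to the auxiliary map $F = h := X + \mathcal{N}g(X)$ whose Jacobian determinant at $\mathbf{0}$ is $1$, so that Corollary~\ref{H-2} applies. Your identity $B\,f(eY) = e^{2}F(Y)$ is exactly the computation in the paper's proof.

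The obstacle you flag, however, is genuine and is precisely where your argument as written does not close. The linear map $B^{-1} = e^{-1}N$ does \emph{not} carry $e^{2}\cdot\mathfrak{m}^{\times n}$ onto itself when $e \notin R^{\times}$: already for $n=2$ and $N = \mathrm{diag}(1,e)$ one has $B^{-1}(e^{2}\cdot\mathfrak{m}^{\times 2}) = e\mathfrak{m} \times e^{2}\mathfrak{m}$. So you cannot simply transport the embedding of $B\circ f$ through $B^{-1}$ and land on the asserted target ball.

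The paper sidesteps this by never trying to invert $B$ on the target. Instead, for a given $y = e^{2}b$ with $b \in \mathfrak{m}^{\times n}$, it solves the equation $f(eX) = y$ directly: applying $\mathcal{N}$ to both sides (equivalently, factoring out $e\,\mathcal{M}(\mathbf{0})$) gives $e^{2}h(X) = \mathcal{N}y = e^{2}\,\mathcal{N}b$, and since $\mathcal{N}$ has entries in $R$ one has $\mathcal{N}b \in \mathfrak{m}^{\times n}$. Cancelling $e^{2}$ in the domain and invoking Corollary~\ref{H-2} yields the unique solution $X = h^{-1}(\mathcal{N}b) \in \mathfrak{m}^{\times n}$, hence the explicit continuous inverse $f^{-1}(y) = e\,h^{-1}(\mathcal{N}\,y/e^{2})$ on $e^{2}\cdot\mathfrak{m}^{\times n}$. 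In other words, your difficulty dissolves once you switch from ``undo $B$ on the target'' to ``apply $B$ to the equation and cancel $e^{2}$''.
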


\begin{proof}
Let $\mathcal{N}$ be the adjugate of the matrix
$\mathcal{M}(\mathbf{0})$ and $y = e^{2}b$ with $b \in
\mathfrak{m}^{\times n}$. Since
$$ f(eX) = e \cdot \mathcal{M}(\mathbf{0}) \cdot X + e^{2} g(X) $$
for an $n$-tuple $g = (g_{1},\ldots,g_{n})$ of polynomials
$g_{1},\ldots,g_{n} \in (X)^{2} R[X]$, we get the equivalences
$$ f(eX) = y \ \Leftrightarrow \ f(eX) - y = \mathbf{0} \ \Leftrightarrow \
   e \cdot \mathcal{M}(\mathbf{0}) \cdot (X + \mathcal{N}g(X) - \mathcal{N}b) = \mathbf{0}. $$
Applying Corollary~\ref{H-2} to the map $h(X) := X +
\mathcal{N}g(X)$, we get
$$ f^{-1}(y) = ex \ \Leftrightarrow \ x = h^{-1}(\mathcal{N}b) \
   \ \text{and} \ \ f^{-1}(y) = e h^{-1}(\mathcal{N} \cdot y/e^{2}). $$
This finishes the proof.
\end{proof}

\vspace{1ex}

Further, let $0 \leq r < n$, $p = (p_{r+1}, \ldots, p_{n})$ be an
$(n-r)$-tuple of polynomials $p_{r+1},\ldots,p_{n} \in R[X]$, $X =
(X_{1},\ldots,X_{n})$,  and
$$ J := \frac{\partial(p_{r+1}, \ldots,
   p_{n})}{\partial(X_{r+1},\ldots,X_{n})}, \ \ e := J(\mathbf{0}). $$
Suppose that
$$ \mathbf{0} \in V := \{ x \in R^{n}: p_{r+1}(x) = \ldots = p_{n}(x) =
   0 \}. $$
In a similar fashion as above, we can establish the following
version of the implicit function theorem.

\begin{proposition}\label{implicit}
If $e \neq 0$, then there exists a unique continuous map
$$ \phi: (e^{2} \cdot \mathfrak{m})^{\times r} \longrightarrow
   (e \cdot \mathfrak{m})^{\times (n-r)} $$
which is definable in the language of valued fields and such that
$\phi(0)=0$ and the graph map
$$ (e^{2} \cdot \mathfrak{m})^{\times r} \ni u \longrightarrow (u,\phi(u)) \in
   (e^{2} \cdot \mathfrak{m})^{\times r} \times
   (e \cdot \mathfrak{m})^{\times (n-r)} $$
is an open embedding into the zero locus $V$ of the polynomials
$p$ and, more precisely, onto
$$ V \cap \left[ (e^{2} \cdot \mathfrak{m})^{\times r} \times
   (e \cdot \mathfrak{m})^{\times (n-r)} \right]. $$
\end{proposition}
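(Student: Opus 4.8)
The plan is to reduce Proposition~\ref{implicit} to the inverse mapping theorem, Proposition~\ref{H-3}, by completing the $(n-r)$-tuple $p$ to an $n$-tuple of polynomials whose Jacobian at the origin is exactly $e$. Concretely, set $f = (f_1,\ldots,f_n)$ with $f_i := X_i$ for $1 \le i \le r$ and $f_i := p_i$ for $r+1 \le i \le n$. Then $f(\mathbf{0}) = \mathbf{0}$ since $\mathbf{0} \in V$, and the Jacobian matrix $\mathcal{M}(\mathbf{0})$ is block lower-triangular with an $r\times r$ identity block in the upper left and the $(n-r)\times(n-r)$ block $\partial(p_{r+1},\ldots,p_n)/\partial(X_{r+1},\ldots,X_n)$ evaluated at $\mathbf{0}$ in the lower right; hence $J(\mathbf{0}) = e \ne 0$. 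So Proposition~\ref{H-3} applies to $f$.

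Next I would unwind what Proposition~\ref{H-3} gives: $f$ is an open embedding of $e \cdot \mathfrak{m}^{\times n}$ onto $e^2 \cdot \mathfrak{m}^{\times n}$, with inverse $f^{-1}$ continuous and definable in the language of valued fields (definability is clear because $f$ is polynomial, so its graph is definable, and the inverse of a definable bijection has a definable graph). Write $g := f^{-1} : e^2\cdot\mathfrak{m}^{\times n} \to e\cdot\mathfrak{m}^{\times n}$. For a point $u \in (e^2\cdot\mathfrak{m})^{\times r}$, consider the point $(u,\mathbf{0}) \in e^2\cdot\mathfrak{m}^{\times n}$ (here $\mathbf{0}$ is the zero of $R^{n-r}$, and $(u,\mathbf{0})$ indeed lies in $e^2\cdot\mathfrak{m}^{\times n}$). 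Then define $\phi(u)$ to be the last $n-r$ coordinates of $g(u,\mathbf{0})$; I should check, by looking at the first $r$ coordinates of $f$, that the first $r$ coordinates of $g(u,\mathbf{0})$ are again $u$, so that $g(u,\mathbf{0}) = (u,\phi(u))$ and in particular $u = (\text{first } r \text{ coords of } f(g(u,\mathbf{0}))) $ forces this. Since $g$ maps into $e\cdot\mathfrak{m}^{\times n}$, we get $\phi(u) \in (e\cdot\mathfrak{m})^{\times(n-r)}$; and since the last $n-r$ coordinates of $f(g(u,\mathbf{0}))$ vanish, $p_{r+1}(u,\phi(u)) = \cdots = p_n(u,\phi(u)) = 0$, i.e.\ $(u,\phi(u)) \in V$. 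Continuity and definability of $\phi$ follow from those of $g$ together with continuity/definability of the inclusion $u \mapsto (u,\mathbf{0})$ and of the coordinate projections. Also $\phi(0) = 0$ because $g(\mathbf{0}) = \mathbf{0}$ by uniqueness in Proposition~\ref{H-3} (as $f(\mathbf{0}) = \mathbf{0}$).

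It then remains to verify the two geometric assertions. First, that the graph map $u \mapsto (u,\phi(u))$ is an open embedding onto $V \cap [(e^2\cdot\mathfrak{m})^{\times r} \times (e\cdot\mathfrak{m})^{\times(n-r)}]$: openness of the graph map as a map onto its image is automatic (its inverse is the first projection, which is continuous), and the image lies in $V$ by construction; for surjectivity onto that intersection, take any $(u,w) \in V$ with $u \in (e^2\cdot\mathfrak{m})^{\times r}$, $w \in (e\cdot\mathfrak{m})^{\times(n-r)}$, note $f(u,w) = (u, p_{r+1}(u,w),\ldots,p_n(u,w)) = (u,\mathbf{0}) \in e^2\cdot\mathfrak{m}^{\times n}$, so by injectivity of $f$ on $e\cdot\mathfrak{m}^{\times n}$ we get $(u,w) = g(u,\mathbf{0}) = (u,\phi(u))$, whence $w = \phi(u)$. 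Second, uniqueness of $\phi$: if $\psi$ is another continuous definable map with $\psi(0)=0$ whose graph lands in $V \cap [(e^2\cdot\mathfrak{m})^{\times r} \times (e\cdot\mathfrak{m})^{\times(n-r)}]$, then for each $u$ the point $(u,\psi(u))$ lies in that intersection, so by the surjectivity argument just given $\psi(u) = \phi(u)$. I expect the only mildly delicate point to be bookkeeping with the codomain constraints: making sure that $(u,\mathbf{0})$ and $(u,\phi(u))$ genuinely sit inside $e^2\cdot\mathfrak{m}^{\times n}$ and $e\cdot\mathfrak{m}^{\times n}$ respectively (so that Proposition~\ref{H-3} may be invoked), which comes down to the elementary observation that $e^2\cdot\mathfrak{m} \subseteq e\cdot\mathfrak{m}$ since $e \in R$, together with tracking which block of coordinates is scaled by $e$ versus $e^2$. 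No genuinely hard step arises; the proposition is a formal consequence of Proposition~\ref{H-3}.
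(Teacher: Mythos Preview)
Your proof is correct and follows essentially the same approach as the paper: both complete $p$ to the $n$-tuple $f=(X_1,\ldots,X_r,p_{r+1},\ldots,p_n)$ with Jacobian determinant $e$ at the origin, and then invert $f$ to read off $\phi(u)$ from $f^{-1}(u,\mathbf{0})$. The only difference is cosmetic: the paper reopens the proof of Proposition~\ref{H-3} and writes down the explicit formula $\phi(u)=e\,h^{-1}(\mathcal{N}\cdot(u,0)/e^{2})$, whereas you invoke Proposition~\ref{H-3} as a black box and argue with the abstract inverse $g=f^{-1}$.
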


\begin{proof}
Put $f(X) := (X_{1},\ldots,X_{r},p(X))$; of course, the jacobian
determinant of $f$ at $\mathbf{0} \in R^{n}$ is equal to $e$. Keep
the notation from the proof of Proposition~\ref{H-3}, take any $b
\in e^{2} \cdot \mathfrak{m}^{\times r}$ and put $y := (e^{2}b,0)
\in R^{n}$. Then we have the equivalences
$$ f(eX)=y \ \Leftrightarrow \ f(eX) - y= \mathbf{0} \
   \Leftrightarrow \ e \mathcal{M}(\mathbf{0}) \cdot
   (X + \mathcal{N} g(X) - \mathcal{N} \cdot (b,0)) = \mathbf{0}. $$
Applying Corollary~\ref{H-2} to the map $h(X) := X +
\mathcal{N}g(X)$, we get
$$ f^{-1}(y) = ex \ \Leftrightarrow \ x = h^{-1}(\mathcal{N} \cdot (b,0)) \
   \ \text{and} \ \ f^{-1}(y) = e h^{-1}(\mathcal{N} \cdot y/e^{2}). $$
Therefore the function
$$ \phi(u) := e h^{-1}(\mathcal{N}\cdot (u,0)/e^{2}) $$
is the one we are looking for.
\end{proof}

\section{Density property and a version of the Artin--Mazur
theorem over Henselian valued fields}

We say that a topological field $K$ satisfies the {\it density
property} (cf.~\cite{K-N,Now2}) if the following equivalent
conditions hold.
\begin{enumerate}
\item If $X$ is a smooth, irreducible $K$-variety and
    $\emptyset\neq U\subset X$ is a Zariski open subset,
then $U(K)$ is dense in $X(K)$ in the $K$-topology.
\item If $C$ is a smooth, irreducible $K$-curve and
    $\emptyset\neq U$ is a Zariski open subset, then $U(K)$ is
    dense in
$C(K)$ in the $K$-topology.
\item If $C$ is a smooth, irreducible $K$-curve, then $C(K)$
    has no isolated points.
\end{enumerate}
(This property is indispensable for ensuring reasonable
topological and geometric properties of algebraic subsets of
$K^{n}$; see~\cite{Now2} for the case where the ground field $K$
is a Henselian rank one valued field.) The density property of
Henselian non-trivially valued fields follows immediately from
Proposition~\ref{implicit} and the Jacobian criterion for
smoothness (see e.g.~\cite[Theorem~16.19]{Eis}), recalled below
for the reader's convenience.

\begin{theorem}\label{smooth}
Let $I = (p_{1}, \ldots, p_{s}) \subset K[X]$, $X =
(X_{1},\ldots,X_{n})$ be an ideal, $A := K[X]/I$ and $V :=
\mathrm{Spec}\, (A)$. Suppose the origin $\mathbf{0} \in K^{n}$
lies in $V$ (equivalently, $I \subset (X)K[X]$) and $V$ is of
dimension $r$ at $\mathbf{0}$. Then the Jacobian matrix
$$ \mathcal{M} := \left[
   \frac{\partial p_{i}}{\partial X_{j}}(\mathbf{0}): \: i=1,\ldots,s, \: j=1,\ldots,n
   \right] $$
has rank $\leq (n-r)$ and $V$ is smooth at $\mathbf{0}$ iff
$\mathcal{M}$ has exactly rank $(n-r)$. Furthermore, if $V$ is
smooth at $\mathbf{0}$ and
$$ \mathcal{J} := \frac{\partial (p_{r+1},\ldots,p_{n})}{\partial (X_{r+1},\ldots,X_{n})}
   (\mathbf{0}) = \det \left[ \frac{\partial p_{i}}{\partial X_{j}} (\mathbf{0}):
   \: i,j=r+1,\ldots,n \right] \neq 0, $$
then $p_{r+1},\ldots,p_{n}$ generate the localization $I \cdot
K[X]_{(X_{1},\ldots,X_{n})}$ of the ideal $I$ with respect to the
maximal ideal $(X_{1},\ldots,X_{n})$.
\end{theorem}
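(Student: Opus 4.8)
The plan is to route everything through the Zariski cotangent space of $V$ at $\mathbf{0}$ and the identification of smoothness with regularity of the local ring, which is legitimate since $K$, being of characteristic zero, is perfect. Write $\mathfrak{m} := (X_{1},\ldots,X_{n}) \subset K[X]$, let $\bar{\mathfrak{m}}$ be its image in $A = K[X]/I$, and let $\mathcal{O} := A_{\bar{\mathfrak{m}}} = K[X]_{\mathfrak{m}}/I_{\mathfrak{m}}$ be the local ring of $V$ at the origin, which by hypothesis has Krull dimension $r$. First I would record the first-order Taylor expansion: for every $p \in \mathfrak{m}$ one has $p \equiv \sum_{j=1}^{n} \frac{\partial p}{\partial X_{j}}(\mathbf{0})\, X_{j} \pmod{\mathfrak{m}^{2}}$. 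Under the canonical isomorphism $\mathfrak{m}/\mathfrak{m}^{2} \cong K^{n}$ sending the class of $X_{j}$ to the $j$-th basis vector, this identifies the image of $I$ with the $K$-span $W$ of the rows of $\mathcal{M}$; passing to the quotient by $I$ yields $\bar{\mathfrak{m}}/\bar{\mathfrak{m}}^{2} \cong K^{n}/W$, hence $\dim_{K} \bar{\mathfrak{m}}/\bar{\mathfrak{m}}^{2} = n - \mathrm{rank}\,\mathcal{M}$.

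The first assertion is then immediate from the basic inequality $\dim_{K} \bar{\mathfrak{m}}/\bar{\mathfrak{m}}^{2} \geq \dim \mathcal{O} = r$, valid for any Noetherian local ring, together with the fact that equality holds precisely when $\mathcal{O}$ is regular — which in turn is equivalent to $V$ being smooth at $\mathbf{0}$ since $K$ is perfect. Thus $\mathrm{rank}\,\mathcal{M} \leq n-r$ always, with equality if and only if $V$ is smooth at $\mathbf{0}$.

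For the last assertion I would set $I' := (p_{r+1},\ldots,p_{n}) \subseteq I$ and $\mathcal{O}' := K[X]_{\mathfrak{m}}/I'_{\mathfrak{m}}$. Since $\mathcal{O}'$ is obtained from the regular local ring $K[X]_{\mathfrak{m}}$ of dimension $n$ by killing $n-r$ elements, Krull's height theorem gives $\dim \mathcal{O}' \geq r$. On the other hand, the Jacobian matrix of $(p_{r+1},\ldots,p_{n})$ at $\mathbf{0}$ contains the nonvanishing minor $\mathcal{J}$, hence has rank $n-r$; applying the cotangent computation of the first paragraph to $I'$ gives $\dim_{K}$ of the cotangent space of $\mathcal{O}'$ equal to $n-(n-r)=r$, so $\dim \mathcal{O}' \leq r$. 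Therefore $\dim \mathcal{O}' = r$ and $\mathcal{O}'$ is regular, in particular a domain.

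Finally, the inclusion $I'_{\mathfrak{m}} \subseteq I_{\mathfrak{m}}$ induces a surjection $\mathcal{O}' \twoheadrightarrow \mathcal{O}$ of Noetherian local rings, both regular of dimension $r$, with kernel $\mathfrak{q}$. If $\mathfrak{q} \neq 0$, choose a nonzero $f \in \mathfrak{q}$; as $\mathcal{O}'$ is a domain, $f$ is a nonzerodivisor in the maximal ideal, so $\dim \mathcal{O}'/(f) = r-1$, whence $\dim \mathcal{O} = \dim \mathcal{O}'/\mathfrak{q} \leq r-1 < r$, a contradiction. Hence $\mathfrak{q} = 0$, i.e.\ $I_{\mathfrak{m}} = I'_{\mathfrak{m}}$, which is exactly the statement that $p_{r+1},\ldots,p_{n}$ generate the localization of $I$ at $(X_{1},\ldots,X_{n})$. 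The only non-formal inputs are the embedding-dimension inequality and the equivalence of smoothness with regularity over a perfect field; both being entirely standard, there is no real obstacle beyond this bookkeeping, which is why in the text it suffices to cite \cite{Eis}.
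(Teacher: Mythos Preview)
Your proof is correct and follows the standard route via the Zariski cotangent space and the embedding-dimension inequality. Note, however, that the paper does not prove this theorem at all: it is merely \emph{recalled} as the Jacobian criterion for smoothness, with a reference to \cite[Theorem~16.19]{Eis}. Your argument is essentially the one found there (identify the image of $I$ in $\mathfrak{m}/\mathfrak{m}^{2}$ with the row space of $\mathcal{M}$, invoke regularity $\Leftrightarrow$ smoothness over a perfect field, and for the last clause compare the two regular local rings of the same dimension), so there is nothing to contrast --- you have supplied what the paper deliberately omitted, and you even flag this yourself in the final sentence.
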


\begin{remark}\label{etale}
Under the above assumptions, consider the completion
$$ \widehat{A} = K[[ X ]]/ I \cdot K[[ X ]] $$
of $A$ in the $(X)$-adic topology. If $\mathcal{J} \neq 0$, it
follows from the implicit function theorem for formal power series
that there are unique power series
$$ \phi_{r+1},\ldots,\phi_{n} \in (X_{1},\ldots,X_{r}) \cdot K[[X_{1},\ldots,X_{r}]] $$
such that
$$ p_{i}(X_{1},\ldots,X_{r},\phi_{r+1}(X_{1},\ldots,X_{r}), \ldots,
   \phi_{n}(X_{1},\ldots,X_{r})) = 0 $$
for $i=r+1,\ldots,n$. Therefore the homomorphism
$$ \widehat{\alpha}: \widehat{A} \longrightarrow K[[X_{1},\ldots,X_{r}]], \ \
   X_{j} \mapsto X_{j}, \ X_{k} \mapsto \phi_{k}(X_{1},\ldots,X_{r}), $$
for $j=1,\ldots,r$ and $k=r+1,\ldots,n$, is an isomorphism.

Conversely, suppose that $\widehat{\alpha}$ is an isomorphism;
this means that the projection from $V$ onto $\mathrm{Spec}\,
K[X_{1},\ldots,X_{r}]$ is etale at $\mathbf{0}$. Then the local
rings $A$ and $\widehat{A}$ are regular and, moreover, it is easy
to check that the determinant $\mathcal{J} \neq 0$ does not vanish
after perhaps renumbering the polynomials $p_{i}(X)$.
\end{remark}

We say that a formal power series $\phi \in K[[X]]$,
$X=(X_{1},\ldots,X_{n})$, is algebraic if it is algebraic over
$K[X]$. The kernel of the homomorphism of $K$-algebras
$$ \sigma: K[X,T] \longrightarrow K[[X]], \ \
   X_{1} \mapsto X_{1}, \, \ldots \, , X_{n} \mapsto X_{n}, \, T \mapsto \phi(X), $$
is, of course, a principal prime ideal:
$$ \mathrm{ker}\, \sigma = (p) \subset K[X,T], $$
where $p \in K[X,T]$ is a unique (up to a constant factor)
irreducible polynomial, called an \emph{irreducible polynomial} of
$\phi$.

\vspace{1ex}

We now state a version of the Artin--Mazur theorem
(cf.~\cite{AM,BCR} for the classical versions).

\begin{proposition}\label{A-M}
Let $\phi \in (X) K[[X]]$ be an algebraic formal power series.
Then there exist polynomials
$$ p_{1},\ldots ,p_{r} \in K[X,Y], \ \ Y=(Y_{1},\ldots,Y_{r}), $$
and formal power series $\phi_{2},\ldots,\phi_{r} \in K[[X]]$ such
that
$$ e := \frac{\partial (p_{1},\ldots ,p_{r})}{\partial (Y_{1},\ldots ,Y_{r})}
   (\mathbf{0}) = \det \left[ \frac{\partial p_{i}}{\partial Y_{j}} (\mathbf{0}):
   \: i,j=1,\ldots,r \right] \neq 0, $$
and
$$ p_{i}(X_{1},\ldots,X_{n},\phi_{1}(X), \ldots, \phi_{r}(X)) = 0, \ \ i=1,\ldots,r, $$
where $\phi_{1} := \phi$.
\end{proposition}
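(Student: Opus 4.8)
The plan is to reduce the statement to the étale/implicit-function situation analyzed in Remark~\ref{etale}, by enlarging the single algebraic power series $\phi$ into a tuple of power series whose defining equations have nonvanishing Jacobian with respect to all of the auxiliary variables. First I would take the irreducible polynomial $p(X,T)\in K[X,T]$ of $\phi$ and consider its behaviour at $\mathbf{0}$. Since $\phi\in (X)K[[X]]$ we have $p(\mathbf{0},0)=0$; write $d=\mathrm{ord}_{T}\, p(\mathbf{0},T)$ for the order of vanishing of the one-variable polynomial $p(\mathbf{0},T)$ at $T=0$. If $d=1$ then $\partial p/\partial T(\mathbf{0})\neq 0$ and we are already done with $r=1$, $p_{1}=p$. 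The substance is the case $d\geq 2$, where one must desingularize the situation by adjoining the successive ``divided derivatives'' of $\phi$.

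The key step is the following recursive construction, modelled on the classical Artin--Mazur argument. Set $\phi_{1}=\phi$ and introduce new power series $\phi_{2},\dots,\phi_{d}$ defined by
\[
 \phi_{2}:=\frac{\partial p}{\partial T}(X,\phi_{1}(X)),\qquad
 \phi_{j+1}:=\frac{\phi_{j}(X)-\phi_{j}(\mathbf{0})\cdot(\text{suitable unit})}{(\text{lowest-order monomial})}
\]
— more precisely one repeatedly divides by a coordinate $X_{k}$ (after a generic linear change of the $X$-variables making $p(\mathbf{0},T)$ ``$X_{1}$-regular'' of order $d$), so that each $\phi_{j}$ again lies in $K[[X]]$ and the chain terminates with some $\phi_{r}$ for which $\partial(\cdots)/\partial T\neq 0$ at the origin after the substitution. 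Each defining relation is polynomial: $\phi_{2}$ satisfies $Y_{2}-\partial p/\partial T(X,Y_{1})=0$ together with $p(X,Y_{1})=0$, and each division step contributes a relation of the form $X_{k}\cdot Y_{j+1}-(Y_{j}-c_{j})=0$ with $c_{j}\in K$. Collecting these equations $p_{1},\dots,p_{r}\in K[X,Y]$ with $Y=(Y_{1},\dots,Y_{r})$, one checks that the $r\times r$ Jacobian $\partial(p_{1},\dots,p_{r})/\partial(Y_{1},\dots,Y_{r})$ at $\mathbf{0}$ is (up to sign) triangular with the last diagonal entry equal to $\partial^{2} p/\partial T^{2}$ evaluated along the curve — or, after iterating the Weierstrass-type reduction, a genuine unit — hence $e\neq 0$. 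This is exactly the ``étale coordinate'' phenomenon of Remark~\ref{etale}: the enlarged system cuts out a variety smooth at $\mathbf{0}$ whose projection to $\mathrm{Spec}\,K[X]$ is étale there.

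I expect the main obstacle to be the bookkeeping that guarantees the division steps stay inside $K[[X]]$ and that the resulting Jacobian is genuinely nonzero. Concretely, one needs a preliminary linear change of the variables $X=(X_{1},\dots,X_{n})$ putting $p(\mathbf{0},T)$ in $X$-general position (possible since $\Bbbk$, hence $K$, is infinite), and then an inductive use of Weierstrass division or of Hensel's lemma to peel off one unit of vanishing order in $T$ at a time; each peeling introduces one new variable $Y_{j}$ and one new polynomial relation, and the delicate point is that at each stage the new series is divisible by the chosen coordinate exactly as predicted, so that after finitely many steps (bounded by $d$) the derivative condition holds. The other routine but necessary verification is that the ideal generated by $p_{1},\dots,p_{r}$ together with $e\neq 0$ really does force the $Y_{i}$ to be power series in $X$ via the formal implicit function theorem, which is precisely what Remark~\ref{etale} supplies once the Jacobian is shown nonsingular. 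Once this is in place, the proposition follows by taking $p_{1},\dots,p_{r}$ to be the collected relations and $\phi_{1},\dots,\phi_{r}$ the constructed series.
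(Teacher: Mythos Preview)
Your proposed construction has a genuine gap: the system of equations you write down always has singular Jacobian at $\mathbf{0}$, so $e=0$ rather than $e\neq 0$. Concretely, you keep the original relation $p_{1}=p(X,Y_{1})$ among your defining polynomials. This polynomial involves only $Y_{1}$ among the $Y$-variables, so the first row of $\bigl[\partial p_{i}/\partial Y_{j}(\mathbf{0})\bigr]$ is $\bigl(\partial p/\partial T(\mathbf{0}),0,\ldots,0\bigr)$, and in the case $d\geq 2$ you are treating, this is the zero row. Hence the determinant $e$ vanishes identically, regardless of how many auxiliary equations you adjoin. Your ``division'' relations $X_{k}Y_{j+1}-(Y_{j}-c_{j})=0$ suffer the same defect in a different column: $\partial/\partial Y_{j+1}$ of such a relation is $X_{k}$, which is $0$ at $\mathbf{0}$, so those rows contribute $(0,\ldots,0,-1,0,\ldots,0)$ with the nonzero entry one slot to the left of the diagonal. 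Even in the simplest case $d=2$ with just $p_{1}=p(X,Y_{1})$ and $p_{2}=Y_{2}-\partial p/\partial T(X,Y_{1})$, the Jacobian at $\mathbf{0}$ is $\begin{pmatrix}0&0\\ *&1\end{pmatrix}$, with determinant $0$.

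The underlying problem is that adjoining new variables and new equations cannot repair the bad row coming from $p$ itself; you must \emph{replace} that equation by a system which cuts out the same germ but is smooth there. The paper does this in one stroke by passing to the integral closure $B$ of $A=K[X,Y_{1}]/(p_{1})$: writing $B=K[X,Y]/(p_{1},\ldots,p_{s})$ and extending the embedding $A\hookrightarrow K[[X]]$ to $B\hookrightarrow K[[X]]$ produces the extra series $\phi_{k}$; normality of $B$ together with Zariski's main theorem forces the completion $\widehat{B}\to K[[X]]$ to be an isomorphism, i.e.\ the projection to $\mathrm{Spec}\,K[X]$ is \'etale at $\mathbf{0}$, and then Remark~\ref{etale} gives the nonvanishing Jacobian after selecting $r$ of the $p_{i}$. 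The normalization is doing real work here that your iterative division scheme does not replicate.
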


\begin{proof}
Let $p_{1}(X,Y_{1})$ be an irreducible polynomial of $\phi_{1}$.
Then the integral closure $B$ of $A := K[X,Y_{1}]/(p_{1})$ is a
finite $A$-module and thus is of the form
$$ B = K[X,Y]/(p_{1},\ldots,p_{s}), \ \ Y=(Y_{1},\ldots,Y_{r}), $$
where $p_{1},\ldots,p_{s} \in K[X,Y]$. Obviously, $A$ and $B$ are
of dimension $n$, and the induced embedding $\alpha: A \to K[[X]]$
extends to an embedding $\beta: B \to K[[X]]$. Put
$$ \phi_{k} := \beta(Y_{k}) \in K[[X]], \ \ k=1,\ldots,r. $$
Substituting $Y_{k} - \phi_{k}(0)$ for $Y_{k}$, we may assume that
$\phi_{k}(0) = 0$ for all $k=1,\ldots,r$. Hence $p_{i}(\mathbf{0})
=0$ for all $i=1,\ldots,s$.

The completion $\widehat{B}$ of $B$ in the $(X,Y)$-adic topology
is a local ring of dimension $n$, and the induced homomorphism
$$ \widehat{\beta}: \widehat{B} =
   K[[X,Y]]/(p_{1},\ldots,p_{s}) \longrightarrow K[[X]] $$
is, of course, surjective. But, by the Zariski main theorem
(cf.~\cite[Chap.~VIII, \S~13, Theorem~32]{Z-S}), $\widehat{B}$ is
a normal domain. Comparison of dimensions shows that
$\widehat{\beta}$ is an isomorphism. Now, it follows from
Remark~\ref{etale} that the determinant $e \neq 0$ does not vanish
after perhaps renumbering the polynomials $p_{i}(X)$. This
finishes the proof.
\end{proof}

Propositions~\ref{A-M} and~\ref{implicit} immediately yield the
following

\begin{corollary}
Let $\phi \in (X) K[[X]]$ be an algebraic power series with
irreducible polynomial $p(X,T) \in K[X,T]$. Then there is an $a
\in K$, $a \neq 0$, and a unique continuous function
$$ \widetilde{\phi}: a \cdot R^{n} \longrightarrow K $$
corresponding to $\phi$, which is definable in the language of
valued fields and such that $\widetilde{\phi}(0) = 0$ and
$p(x,\widetilde{\phi}(x)) =0$ for all $x \in a \cdot R^{n}$.
\hspace*{\fill}$\Box$
\end{corollary}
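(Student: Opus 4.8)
The plan is to combine the Artin--Mazur presentation from Proposition~\ref{A-M} with the definable implicit function theorem of Proposition~\ref{implicit}, and then to check that the function so obtained depends only on $\phi$ (equivalently, on its irreducible polynomial $p$) and actually satisfies $p(x,\widetilde{\phi}(x))=0$. First I would invoke Proposition~\ref{A-M} to obtain polynomials $p_{1},\ldots,p_{r}\in K[X,Y]$, $Y=(Y_{1},\ldots,Y_{r})$, and power series $\phi_{1}=\phi,\phi_{2},\ldots,\phi_{r}\in (X)K[[X]]$ with
$$ e:=\frac{\partial(p_{1},\ldots,p_{r})}{\partial(Y_{1},\ldots,Y_{r})}(\mathbf{0})\neq 0,\qquad p_{i}(X,\phi_{1}(X),\ldots,\phi_{r}(X))=0. $$
Here one views $p=(p_{1},\ldots,p_{r})$ as an $(n+r-r)$-tuple in the variables $(X,Y)$ with the last $r$ coordinates $Y$ playing the role of the ``implicit'' block, so that the relevant Jacobian with respect to $Y$ is exactly $e$, nonzero at the origin.

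Next I would apply Proposition~\ref{implicit} with the roles $n\rightsquigarrow n+r$, $r\rightsquigarrow n$, and the distinguished variables being $X$ (free) and $Y$ (to be solved): since $\mathbf{0}$ lies in the zero locus $V$ of the $p_{i}$ and $e\neq 0$, there is a unique continuous $\mathcal{L}$-definable map
$$ \Phi=(\widetilde{\phi}_{1},\ldots,\widetilde{\phi}_{r}):(e^{2}R)^{\times n}\longrightarrow (eR)^{\times r} $$
with $\Phi(0)=0$ whose graph is an open embedding onto $V\cap\big[(e^{2}R)^{\times n}\times (eR)^{\times r}\big]$; in particular $p_{i}(x,\Phi(x))=0$ for all $x$ in that polydisc. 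Set $a:=e^{2}$ (or any nonzero scalar whose absolute value is at most that of $e^{2}$) and $\widetilde{\phi}:=\widetilde{\phi}_{1}$ on $a\cdot R^{n}$. It remains to show two things: that $p(x,\widetilde{\phi}(x))=0$ with $p$ the irreducible polynomial of $\phi$, and that $\widetilde{\phi}$ does not depend on the choices made in Proposition~\ref{A-M}. For the first, I would argue that the formal Taylor expansion of the definable function $x\mapsto p(x,\widetilde{\phi}(x))$ at $\mathbf{0}$ coincides with the power series $p(X,\phi(X))$, which is zero because $\phi$ is a root of $p$ over $K[X]$; since a continuous $\mathcal{L}$-definable function on a polydisc that is ``formally zero'' at an interior point vanishes on a neighborhood of that point (here one uses the density property and the fact, from Remark~\ref{etale} / Proposition~\ref{A-M}, that the formal graph is an isomorphism, so the $K$-points of the graph are Zariski dense), one gets $p(x,\widetilde{\phi}(x))\equiv 0$ after possibly shrinking $a$.

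For uniqueness, suppose $\psi:a'\cdot R^{n}\to K$ were another continuous $\mathcal{L}$-definable function with $\psi(0)=0$ and $p(x,\psi(x))=0$ identically on a polydisc around $\mathbf{0}$. Both $\widetilde{\phi}$ and $\psi$ then take values in the branch of $V(p)$ through $(\mathbf{0},0)$ cut out by the implicit function theorem, and both have the prescribed formal expansion $\phi$ at the origin; comparing graphs inside the smooth locus (again invoking Proposition~\ref{implicit} applied to $p$ alone, which is legitimate because $\partial p/\partial T(\mathbf{0})=e'\neq 0$ by the very construction of the irreducible polynomial together with the etale condition of Remark~\ref{etale}) forces $\widetilde{\phi}=\psi$ on a common polydisc, and then on the intersection of their domains by the identity principle for definable functions agreeing on an open set of a smooth branch. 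I expect the main obstacle to be precisely this ``identity principle'' step---transferring the formal vanishing $p(X,\phi(X))=0$ to genuine vanishing of the definable function $p(x,\widetilde{\phi}(x))$ on an honest $K$-polydisc, and likewise the uniqueness---which is where the density property and the fact that the formal completion of the graph is a regular (indeed etale-over-$K[[X]]$) local ring do the real work; everything else is bookkeeping on Jacobians and rescaling.
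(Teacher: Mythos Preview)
Your overall strategy---apply Proposition~\ref{A-M} to obtain a system $p_{1},\ldots,p_{r}$ with nonvanishing Jacobian in the $Y$-block, then invoke Proposition~\ref{implicit} to produce the definable implicit map $\Phi=(\widetilde{\phi}_{1},\ldots,\widetilde{\phi}_{r})$ and set $\widetilde{\phi}:=\widetilde{\phi}_{1}$---is exactly what the paper intends; the Corollary is stated as an immediate consequence of those two propositions.

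Where you diverge is in the step you flag as the ``main obstacle'', and there you both over-complicate one point and introduce a genuine error in another. For the vanishing $p(x,\widetilde{\phi}(x))=0$ no identity principle is needed. In the proof of Proposition~\ref{A-M} the irreducible polynomial $p$ is the first generator of the ideal $(p_{1},\ldots,p_{s})$ defining $B$, and by the last clause of Theorem~\ref{smooth} the $r$ polynomials ultimately retained (after renumbering) generate that entire ideal in the localization at the origin. Hence $p$ is a $K[X,Y]_{(X,Y)}$-combination of the retained $p_{1},\ldots,p_{r}$ and therefore vanishes on their common zero locus near $\mathbf{0}$; in particular $p(x,\widetilde{\phi}_{1}(x))=0$ on a sufficiently small polydisc. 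This is pure commutative algebra---no ``formally zero $\Rightarrow$ zero'' transfer is required.

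Your uniqueness argument, however, contains an actual mistake: you assert $\partial p/\partial T(\mathbf{0})\neq 0$ for the irreducible polynomial $p$ of $\phi$, appealing to Remark~\ref{etale}. This is false in general. Take $n=1$, $\phi(X)=X\sqrt{1+X}\in (X)K[[X]]$ with irreducible polynomial $p(X,T)=T^{2}-X^{2}(1+X)$; then $\partial p/\partial T(0,0)=0$. Indeed the whole point of Artin--Mazur is that $A=K[X,T]/(p)$ may be \emph{singular} at the origin, which is why one passes to its normalization $B$ to obtain a nonvanishing Jacobian; Remark~\ref{etale} speaks to the smoothness of $B$, not of $A$. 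Consequently Proposition~\ref{implicit} cannot be applied to $p$ alone, and the conditions $p(x,\psi(x))=0$, $\psi(0)=0$ do \emph{not} pin down $\psi$ (in the example both $x\sqrt{1+x}$ and $-x\sqrt{1+x}$ qualify). The uniqueness asserted in the Corollary is that of the full map $\Phi$ in Proposition~\ref{implicit}; the phrase ``corresponding to $\phi$'' singles out $\widetilde{\phi}$ through the construction, not through the bare equation $p=0$.
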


For simplicity, we shall denote the induced continuous function by
the same letter $\phi$. This abuse of notation will not lead to
confusion in general.

\begin{remark}
Clearly, the ring $K[[X]]_{alg}$ of algebraic power series is the
henselization of the local ring $K[X]_{(X)}$ of regular functions.
Therefore the implicit functions
$\phi_{r+1}(u),\ldots,\phi_{n}(u)$ from Proposition~\ref{implicit}
correspond to unique algebraic power series
$$ \phi_{r+1}(X_{1},\ldots,X_{r}),\ldots,\phi_{n}(X_{1},\ldots,X_{r}) $$
without constant term. In fact, one can deduce by means of the
classical version of the implicit function theorem for restricted
power series (cf.~\cite[Chap.~III, \S 4.5]{Bour} or~\cite{Fish})
that $\phi_{r+1},\ldots,\phi_{n}$ are of the form
$$ \phi_{k}(X_{1},\ldots,X_{r}) = e \cdot \omega_{k} (X_{1}/e^{2}, \ldots,
   X_{r}/e^{2}), \ \ k= r+1,\ldots, n, $$
where $\omega_{k}(X_{1},\ldots, X_{r}) \in
R[[X_{1},\ldots,X_{r}]]$ and $e \in R$.
\end{remark}

\section{The Newton--Puiseux and Abhyankar--Jung Theorems}

Here we are going to provide a version of the Newton--Puiseux
theorem, which will be used in analysis of definable functions of
one variable in the next section.

\vspace{1ex}

We call a polynomial
$$ f(X;T)= T^{s} + a_{s-1}(X)T^{n-1} + \cdots + a_{0}(X) \in K[[X]][T],
$$
$X=(X_{1},\ldots,X_{s})$, quasiordinary if its discriminant $D(X)$
is a normal crossing:
$$ D(X) = X^{\alpha} \cdot u(X) \ \ \ \mbox{ with } \ \ \alpha \in
   \mathbb{N}^{s}, \ u(X) \in k[[X]], \ u(0) \neq 0. $$

Let $K$ be an algebraically closed field of characteristic zero.
Consider a henselian $K[X]$-subalgebra $K \langle X \rangle$ of
the formal power series ring $K[[X]]$ which is closed under
reciprocal (whence it is a local ring), power substitution and
division by a coordinate. For positive integers
$r_{1},\ldots,r_{n}$ put
$$ K \langle X_{1}^{1/r_{1}},\ldots, X_{n}^{1/r_{n}} \rangle := \left\{
   a( X_{1}^{1/r_{1}},\ldots, X_{n}^{1/r_{n}}): a(X) \in K \langle X
   \rangle \right\}; $$
when $r_{1}= \ldots =r_{m}=r$, we denote the above algebra by $K
\langle X^{1/r} \rangle$.

\vspace{1ex}

In our paper~\cite{Now1} (see also~\cite{P-R}), we established a
version of the Abhyankar--Jung theorem recalled below. This
axiomatic approach to that theorem was given for the first time in
our preprint~\cite{Now0}.

\begin{proposition}\label{A-J}
Under the above assumptions, every quasi\-ordinary polynomial
$$ f(X;T)= T^{s} + a_{s-1}(X)T^{s-1} + \cdots + a_{0}(X)
   \in K \langle X \rangle[T] $$
has all its roots in $K \langle X^{1/r} \rangle$ for some $r \in
\mathbb{N}$; actually, one can take $r = s!$.
\end{proposition}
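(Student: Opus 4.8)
The plan is to prove Proposition~\ref{A-J} (the Abhyankar--Jung theorem in the axiomatic setting of $K\langle X\rangle$) by induction on the number of variables $n$, reducing the multivariate quasiordinary case to the one-variable Newton--Puiseux situation and then clearing denominators uniformly. First I would treat the base case $n=1$: here a polynomial $f(X;T)\in K\langle X\rangle[T]$ of degree $s$ whose discriminant is $X_1^{\alpha_1}u(X_1)$ with $u(0)\neq 0$ must be shown to split over $K\langle X_1^{1/s!}\rangle$. After the substitution $X_1\mapsto X_1^{s!}$ the roots become, over the formal Puiseux field, ordinary power series; one then uses that $K\langle X\rangle$ is henselian together with the fact that the discriminant is (after the substitution) a unit times a power of $X_1$, so that Hensel's lemma applies to factor $f$ into linear factors with coefficients in $K\langle X_1^{1/s!}\rangle$. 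Closure of $K\langle X\rangle$ under power substitution is exactly what lets us pass back and forth between $X_1$ and $X_1^{1/r}$ while staying inside the prescribed algebra; closure under division by a coordinate handles the $X_1^{\alpha_1}$ factor in the discriminant.

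For the inductive step $n>1$, I would argue as follows. Discard the last variable $X_n$ by working over the algebra $K\langle X_1,\dots,X_{n-1}\rangle\langle X_n\rangle$ viewed as a one-variable algebra in $X_n$ with coefficients in a field containing $\mathrm{Frac}(K\langle X_1,\dots,X_{n-1}\rangle)$; quasiordinariness says the discriminant in all variables is a monomial times a unit, so in particular it is a monomial in $X_n$ times something invertible in $X_n$. Applying the one-variable case (the base step, but over this coefficient ring) gives the roots in $K\langle\dots\rangle\langle X_n^{1/r_n}\rangle$ with $r_n\mid s!$. Now each such root is itself quasiordinary-related data in the remaining variables $X_1,\dots,X_{n-1}$: one checks that the discriminant of $f$ with respect to the first $n-1$ variables, evaluated after the $X_n$-substitution, is again a normal crossing, so the inductive hypothesis applies and produces roots in $K\langle X_1^{1/r_1},\dots,X_{n-1}^{1/r_{n-1}}, X_n^{1/r_n}\rangle$. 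Taking a common $r=\mathrm{lcm}$ (which divides $s!$) finishes the splitting. Throughout, the three closure properties of $K\langle X\rangle$ are invoked to guarantee that the roots we construct genuinely lie in the ramified algebra $K\langle X^{1/r}\rangle$ rather than merely in the formal power series overfield.

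The main obstacle I anticipate is the bookkeeping that ensures the discriminant stays a normal crossing at each stage of the induction and that the ramification indices can be bounded uniformly by $s!$ rather than by some larger number depending on iterated factorizations. Concretely, when we peel off one variable and look at a root $T=\tau(X_1^{1/r_n}\text{-type data})$, we must verify that substituting this root back does not spoil the monomial form of the discriminant in the remaining variables, and that the degree of the relevant minimal polynomial over the smaller algebra still divides $s$ so that $s!$ suffices. This is handled by the standard observation that the roots of a quasiordinary polynomial have pairwise differences that are monomials times units (the ``quasiordinary'' property is preserved under the relevant operations), together with the fact that any factorization of $f$ over an intermediate ring has factors of degree at most $s$, so every ramification index arising divides $s!$. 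The remaining verifications — that power substitution commutes with taking discriminants up to an invertible monomial factor, and that henselianity of $K\langle X\rangle$ is inherited by the ramified algebras — are routine and will be carried out without detailed computation, citing \cite{Now1,P-R} for the technical lemmas already established there.
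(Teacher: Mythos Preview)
The paper does not actually prove Proposition~\ref{A-J}: it is explicitly \emph{recalled} from~\cite{Now1} (with a pointer also to~\cite{P-R}), so there is no in-paper argument to compare your proposal against. Your sketch is therefore not competing with anything here; it is an attempt to supply what the paper deliberately outsources.

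That said, as an outline your induction on $n$ is in the right spirit but has two places where more than ``bookkeeping'' is needed. First, in the base case $n=1$ you write that after $X_{1}\mapsto X_{1}^{s!}$ ``Hensel's lemma applies to factor $f$ into linear factors''. It does not: the discriminant is still divisible by a power of $X_{1}$, so the reduction modulo the maximal ideal does not separate the roots, and henselianity alone gives no linear splitting. One really needs a Newton-polygon or Weierstrass-type argument here (this is the content of the one-variable Newton--Puiseux theorem for the algebra $K\langle X\rangle$, which in the paper is a \emph{corollary} of Proposition~\ref{A-J}, not an input to it). Second, in the inductive step you apply the one-variable case in $X_{n}$ over $\mathrm{Frac}(K\langle X_{1},\dots,X_{n-1}\rangle)$ and then assert that the resulting roots are ``quasiordinary-related data in the remaining variables'' so that the induction hypothesis applies. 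Passing to the fraction field destroys the local ring structure you need, and there is no automatic reason the roots, a priori elements of $\mathrm{Frac}(K\langle X_{1},\dots,X_{n-1}\rangle)\langle X_{n}^{1/r_{n}}\rangle$, satisfy a monic quasiordinary equation over $K\langle X_{1},\dots,X_{n-1}\rangle$. This descent is exactly the hard step; it is what the arguments in~\cite{Now1,P-R} are devoted to, and it cannot be waved through as routine. Since you end by citing those same references for the ``technical lemmas'', your proposal and the paper ultimately rest on the same external source.
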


A particular case is the following version of the Newton-Puiseux
theorem.

\begin{corollary}\label{N-P}
Let $X$ denote one variable. Every polynomial
$$ f(X;T)= T^{s} + a_{s-1}(X)T^{s-1} + \cdots + a_{0}(X)
   \in K \langle X \rangle[T] $$
has all its roots in $K \langle X^{1/r} \rangle$ for some $r \in
\mathbb{N}$; one can take $r = s!$. Equivalently, the polynomial
$f(X^{r},T)$ splits into $T$-linear factors. If $f(X,T)$ is
irreducible, then $r=s$ will do and
$$ f(X^{s},T) = \prod_{i=1}^{s} \, (T - \phi(\epsilon^{i}X)), $$
where $\phi(X) \in K \langle X \rangle$ and $\epsilon$ is a
primitive root of unity.
\end{corollary}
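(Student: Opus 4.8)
The plan is to derive the Newton--Puiseux statement (Corollary~\ref{N-P}) from the Abhyankar--Jung theorem (Proposition~\ref{A-J}), specialized to a single variable $X$, and then to sharpen the bound on $r$ in the irreducible case by a standard Galois-theoretic argument. First I would observe that when $X$ is one variable, every polynomial $f(X;T)\in K\langle X\rangle[T]$ is automatically quasiordinary: its discriminant $D(X)$ lies in $K\langle X\rangle\subset K[[X]]$, and since $K[[X]]$ in one variable is a discrete valuation ring, $D(X)=X^{\alpha}u(X)$ with $\alpha\in\mathbb{N}$ and $u(0)\neq 0$ (we may assume $f$ has no repeated factor, or argue factor by factor). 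Hence Proposition~\ref{A-J} applies and yields all roots of $f$ in $K\langle X^{1/r}\rangle$ with $r=s!$. Substituting $X\mapsto X^{r}$ turns these roots into elements of $K\langle X\rangle$, so $f(X^{r},T)$ splits into $T$-linear factors over $K\langle X\rangle$, which is the asserted equivalent reformulation.

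Next I would treat the irreducible case. Suppose $f(X,T)$ is irreducible of degree $s$ over $K\langle X\rangle$. Since $K\langle X\rangle$ is an integrally closed local domain with fraction field $L$, the polynomial $f$ is also irreducible over $L$, and by the first part it splits over the fraction field $M$ of $K\langle X^{1/s!}\rangle$. A cleaner route is to pass to the Puiseux field: letting $\phi$ be one root, we have a primitive element generating a degree-$s$ extension of $L$, and because all conjugates differ by the $K\langle X\rangle$-automorphisms $X^{1/r}\mapsto\zeta X^{1/r}$ coming from roots of unity $\zeta$, the extension $L(\phi)/L$ is already contained in $L(X^{1/s})$ — indeed the ramification at the single "variable'' place forces the minimal $r$ dividing $s!$ to equal the degree $s$ of the (totally ramified, since $K$ is algebraically closed of characteristic zero) extension. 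Concretely: write $\phi\in K\langle X^{1/s!}\rangle$, let $d$ be the smallest positive integer with $\phi(X)\in K\langle X^{1/d}\rangle$, so $\psi(Y):=\phi(Y^{d})\in K\langle Y\rangle$; the conjugates of $\phi$ over $L$ are among $\psi(\zeta Y)$ for $d$-th roots of unity $\zeta$, and minimality of $d$ forces these $d$ values to be pairwise distinct, giving $[L(\phi):L]=d$, whence $d=s$. Therefore $\phi(X)\in K\langle X^{1/s}\rangle$, i.e. $\phi(X^{s})=\psi(X)\in K\langle X\rangle$, and
$$ f(X^{s},T)=\prod_{i=1}^{s}\bigl(T-\psi(\epsilon^{i}X)\bigr) $$
with $\epsilon$ a primitive $s$-th root of unity, as claimed (after relabeling $\psi$ as $\phi$).

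The steps in order are thus: (1) reduce to the reduced (squarefree) case and verify the one-variable discriminant is a monomial times a unit, so $f$ is quasiordinary; (2) invoke Proposition~\ref{A-J} to get roots in $K\langle X^{1/s!}\rangle$ and reformulate via the substitution $X\mapsto X^{r}$; (3) in the irreducible case, introduce the minimal ramification index $d$ of a fixed root, identify the conjugates as Galois translates by roots of unity, and use minimality to conclude $d$ equals the degree $s$; (4) read off the stated product factorization of $f(X^{s},T)$.

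The main obstacle I expect is step~(3): one must be careful that $K\langle X\rangle$, being only an abstract henselian subalgebra of $K[[X]]$ closed under power substitution and division by a coordinate rather than the full Puiseux ring, still supports the needed Galois action and the claim that all conjugate roots are obtained by the substitutions $X^{1/d}\mapsto\epsilon^{i}X^{1/d}$. This requires knowing that $K\langle X^{1/d}\rangle$ is Galois over $K\langle X\rangle$ with cyclic group generated by $X^{1/d}\mapsto\epsilon X^{1/d}$ — which uses closure under power substitution to make the automorphism well-defined on the algebra, and the fact that $K$ is algebraically closed of characteristic zero so that $\mu_{d}\subset K$. Once that structural point is in place, the minimality argument pinning $d=s$ is routine. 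A secondary, purely bookkeeping, point is handling non-irreducible $f$: one factors $f$ into irreducibles over the (henselian, hence the factorization is "already there'') ring $K\langle X\rangle$, applies the irreducible case with the per-factor degree, and takes $r$ to be the least common multiple of the factor degrees, which divides $s!$; taking $r=s!$ uniformly then certainly works.
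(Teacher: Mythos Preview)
Your approach matches the paper's: Corollary~\ref{N-P} is stated there simply as ``a particular case'' of Proposition~\ref{A-J} with no further argument, and your derivation---observe that in one variable any nonzero discriminant is automatically a monomial times a unit, hence $f$ is quasiordinary, then invoke Proposition~\ref{A-J} with $r=s!$---is exactly what is intended. The Galois/minimality argument you supply for the irreducible refinement $r=s$ is the standard one and is left entirely implicit in the paper.

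The one delicate point you flag is real in the abstract axiomatic setting: the listed closure properties of $K\langle X\rangle$ do not by themselves obviously guarantee that the linear substitution $Y\mapsto\epsilon Y$ preserves $K\langle Y\rangle$, nor that $\psi(Y)\in K\langle Y\rangle$ with $\psi(Y)=\tilde\psi(Y^{e})$ forces $\tilde\psi\in K\langle Z\rangle$. A cleaner way to organize step~(3) is to argue at the level of fraction fields: since $K$ is algebraically closed of characteristic zero, $\mathrm{Frac}\,K\langle X^{1/s!}\rangle$ over $\mathrm{Frac}\,K\langle X\rangle$ is cyclic Kummer of degree~$s!$, its intermediate fields are exactly $\mathrm{Frac}\,K\langle X^{1/d}\rangle$ for $d\mid s!$, and irreducibility of $f$ forces the field generated by a root to have degree~$s$, hence to equal $\mathrm{Frac}\,K\langle X^{1/s}\rangle$; then $\phi\in K\langle X^{1/s!}\rangle\cap\mathrm{Frac}\,K\langle X^{1/s}\rangle$. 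In the paper's intended application $K\langle X\rangle=K[[X]]_{\mathrm{alg}}$ (stated just after the corollary), all of these closure and descent properties are immediate, so your concern does not obstruct the argument where it is actually used.
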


\begin{remark}\label{puis}
Since the proof of these theorems is of finitary character, it is
easy to check that if the ground field $K$ of characteristic zero
is not algebraically closed, they remain valid for the Henselian
subalgebra $\overline{K} \otimes_{K} K \langle X \rangle$ of
$\overline{K}[[ X ]]$, where $\overline{K}$ denotes the algebraic
closure of $K$.
\end{remark}

\vspace{1ex}

The ring $K[[X]]_{alg}$ of algebraic power series is a local
Henselian ring closed under power substitutions and division by a
coordinate. Thus the above results apply to the algebra $K \langle
X \rangle = K[[X]]_{alg}$.

\section{Definable functions of one variable}

At this stage, we can readily to proceed with analysis of
definable functions of one variable over arbitrary Henselian
valued fields of equicharacteristic zero. We wish to establish a
general version of the theorem on existence of the limit stated
below. It was proven in~\cite[Proposition~5.2]{Now2} over rank one
valued fields. Now the language $\mathcal{L}$ under consideration
is the three-sorted language of Denef--Pas.

\begin{theorem}\label{limit-th} (Existence of the limit)
Let $f:A \to K$ be an $\mathcal{L}$-definable function on a subset
$A$ of $K$ and suppose $0$ is an accumulation point of $A$. Then
there is a finite partition of $A$ into $\mathcal{L}$-definable
sets $A_{1},\ldots,A_{r}$ and points $w_{1}\ldots,w_{r} \in
\mathbb{P}^{1}(K)$ such that
$$ \lim_{x \rightarrow 0}\, f|A_{i}\, (x) = w_{i} \ \ \ \text{for} \ \
   i=1,\ldots,r. $$
Moreover, there is a neighborhood $U$ of $0$ such that each
definable set
$$ \{ (v(x), v(f(x))): \; x \in (A_{i} \cap U) \setminus \{0 \} \}
   \subset \Gamma \times (\Gamma \cup \ \{
   \infty \}),  \ i=1,\ldots,r, $$
is contained in an affine line with rational slope
$$ q \cdot l = p_{i} \cdot k + \beta_{i}, \ \ i=1,\ldots,r, $$
with $p_{i},q \in \mathbb{Z}$, $q>0$, $\beta_{i} \in \Gamma$, or
in\/ $\Gamma \times \{ \infty \}$.
\end{theorem}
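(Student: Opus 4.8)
The plan is to reduce the statement, via Pas's quantifier elimination, to a situation where the function $f$ is described by a single cell and then to analyze the resulting algebraic data using the Newton--Puiseux theorem of Section~4. First I would invoke quantifier elimination and preparation cell decomposition of Denef--Pas (\cite{Pa1}) to partition $A$ near $0$ into finitely many cells on each of which $f(x)$ has the form
$$ f(x) = c(x)\cdot \bigl(\mathrm{ac}(x-\xi)\bigr)^{\mu}\cdot P\bigl(\mathrm{ac}(\text{things}),\, \overline{v(\cdot)}\bigr), $$
more precisely where, after centering, the valuation $v(f(x))$ and the angular component of $f(x)$ are controlled by finitely many $K$-sort data together with $\Gamma$-sort and $\Bbbk$-sort parameters; the key point is that on a suitably small cell the behavior of $f$ is governed by an \emph{algebraic} power series in a single variable (a local uniformizer), using the version of the Artin--Mazur theorem (Proposition~\ref{A-M}) and the resulting continuous definable functions attached to algebraic power series. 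So the heart of the matter becomes: understand $\lim_{x\to 0} g(x)$ and the set $\{(v(x),v(g(x)))\}$ for $g$ an algebraic power series function of one variable on a punctured neighborhood of $0$.

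For that one-variable algebraic case I would apply the Newton--Puiseux theorem in the form of Corollary~\ref{N-P}: writing $g$ as a root of a polynomial $T^{s}+a_{s-1}(X)T^{s-1}+\cdots+a_{0}(X)$ with algebraic-power-series coefficients, after the substitution $X = Y^{s!}$ the polynomial splits into $T$-linear factors with roots in $K\langle Y\rangle$ (extending scalars to $\overline{K}$ if necessary, by Remark~\ref{puis}). Each such root $\phi(Y)$ is an honest power series, so it has a well-defined leading term $b_{d}Y^{d}+\cdots$; this immediately gives the limit of $g$ as $x\to 0$ (namely $0$ if $d>0$, $b_{d}$-ish constant if $d=0$, and $\infty$ after passing to $1/g$ if $g$ has a pole, which is the case $\mu<0$ absorbed by allowing $w_i\in\mathbb P^1(K)$) and shows that $v(g(x)) = \tfrac{d}{s!}\,v(x) + v(b_d) + (\text{higher order, eventually negligible})$. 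Shrinking the neighborhood so that the leading term dominates, the pair $(v(x),v(g(x)))$ lands on the affine line $s!\cdot l = d\cdot k + s!\,v(b_d)$, i.e. $q\cdot l = p_i\cdot k + \beta_i$ with $q = s!$, $p_i = d$, $\beta_i = s!\,v(b_d)$; and when $g\equiv 0$ on the cell the pair lies in $\Gamma\times\{\infty\}$. Taking a common $q$ (e.g.\ the lcm, or $s!$ for the largest $s$ occurring) across all finitely many cells handles the uniformity of $q$ in the statement. Finally I would check that all the partitioning, the points $w_i$, and the linear relations produced this way are $\mathcal L$-definable, which they are since each step (cell decomposition, the algebraic-power-series functions, the valuations) is carried out inside the language $\mathcal L$.

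The main obstacle I anticipate is the passage from "definable function of one variable" to "algebraic power series function of one variable". Pas cell decomposition controls $f$ in terms of angular components and valuations, but the coefficient functions appearing are themselves only known to be definable, and one must argue that near the accumulation point $0$ they are, up to the allowed finite partition and up to a coordinate change $x\mapsto$ (uniformizer), given by algebraic power series to which Section~3 and Section~4 apply; this is exactly where the preparatory work of the preceding sections (the implicit function theorem, Artin--Mazur, Abhyankar--Jung) gets used, and keeping track of the centers $\xi$ of the cells and of the fact that they too are governed by algebraic power series is the delicate bookkeeping. A secondary but routine nuisance is the higher-order-term estimate showing the leading monomial of $\phi(Y)$ really dominates on a small enough punctured neighborhood, so that the point $(v(x),v(g(x)))$ sits \emph{exactly} on the affine line rather than merely asymptotically; this uses that $v(x)$ is large (close to $+\infty$ or $-\infty$ depending on the side) on that neighborhood and that a power series with nonzero leading term has valuation equal to that of its leading term once the argument has large enough valuation.
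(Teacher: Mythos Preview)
Your approach is essentially the same as the paper's: the author explicitly says the proof is a \emph{mutatis mutandis} repetition of the rank-one argument in~\cite{Now2}, the only change being that the algebras $L\{X\}$ and $\widehat{K}\{X\}$ of restricted power series used there are replaced by $\overline{K}\otimes_{K} K[[X]]_{\mathrm{alg}}$ and $K[[X]]_{\mathrm{alg}}$, so that Newton--Puiseux (Corollary~\ref{N-P} via Remark~\ref{puis}) can be applied to the algebraic power series arising from the Pas cell decomposition --- exactly your plan.

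One point you pass over but the paper flags: after extending scalars to $\overline{K}$ to run the Puiseux factorization, the leading coefficients $b_{d}$ and hence the candidate limits $w_{i}$ live a priori only in $\mathbb{P}^{1}(\overline{K})$; you must argue they actually lie in $\mathbb{P}^{1}(K)$. The paper handles this by observing that $K$ is a closed subspace of $\overline{K}$ (Lemma~5.1 of~\cite{Now2}), which in the present generality follows from $K$ being algebraically maximal (Henselian and finitely ramified). Add that step and your outline is complete.
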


\begin{proof}
Having the Newton--Puiseux theorem for algebraic power series at
hand, we can repeat mutatis mutandis the proof from loc.~cit.\ as
briefly outlined below. In that paper, the field $L$ is the
completion of the algebraic closure $\overline{K}$ of the ground
field $K$. Here, in view of Corollary~\ref{puis}, the $K$-algebras
$L\{ X \}$ and $\widehat{K}\{ X \}$ should be just replaced with
$\overline{K} \otimes_{K} K[[X]]_{alg}$ and $K[[X]]_{alg}$,
respectively. Then the reasonings follow almost verbatim. Note
also that Lemma~5.1 (to the effect that $K$ is a closed subspace
of $\overline{K}$) holds true for arbitrary Henselian valued
fields of equicharacteristic zero. This follows directly from that
the field $K$ is algebraically maximal (as it is Henselian and
finitely ramified; see e.g.~\cite[Chap.~4]{E-Pre}).
\end{proof}

We conclude with the following comment. The above proposition
along with the technique of fiber shrinking
from~\cite[Section~6]{Now2} were two basic tools in the proof of
the closedness theorem~\cite[Theorem~3.1]{Now2} over Henselian
rank one valued fields, which plays an important role in Henselian
geometry.

\vspace{1ex}


\section{Fiber shrinking}

Consider a Henselian valued field $K$ of equicharacteristic zero
along with the three-sorted language $\mathcal{L}$ of Denef--Pas.
In this section, we remind the reader the concept of fiber
shrinking introduced in our paper~\cite[Section~6]{Now2}.

\vspace{1ex}

Let $A$ be an $\mathcal{L}$-definable subset of $K^{n}$ with
accumulation point
$$ a = (a_{1},\ldots,a_{n}) \in K^{n} $$
and $E$ an $\mathcal{L}$-definable subset of $K$ with accumulation
point $a_{1}$. We call an $\mathcal{L}$-definable family of sets
$$ \Phi = \bigcup_{t \in E} \ \{ t \} \times \Phi_{t} \subset A $$
an $\mathcal{L}$-definable $x_{1}$-fiber shrinking for the set $A$
at $a$ if
$$ \lim_{t \rightarrow a_{1}} \, \Phi_{t} = (a_{2},\ldots,a_{n}),
$$
i.e.\ for any neighborhood $U$ of $(a_{2},\ldots,a_{n}) \in
K^{n-1}$, there is a neighborhood $V$ of $a_{1} \in K$ such that
$\emptyset \neq \Phi_{t} \subset U$ for every $t \in V \cap E$, $t
\neq a_{1}$. When $n=1$, $A$ is itself a fiber shrinking for the
subset $A$ of $K$ at an accumulation point $a \in K$.


\begin{proposition}\label{FS} (Fiber shrinking)
Every $\mathcal{L}$-definable subset $A$ of $K^{n}$ with
accumulation point $a \in K^{n}$ has, after a permutation of the
coordinates, an $\mathcal{L}$-definable $x_{1}$-fiber shrinking at
$a$.
\end{proposition}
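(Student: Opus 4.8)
The plan is to reduce the multivariate statement to the one-variable situation via an induction on $n$, using the theorem on existence of the limit (Theorem~\ref{limit-th}) together with quantifier elimination for ordered abelian groups to control the geometry of the relevant valuation-value sets. The case $n=1$ is trivial by definition, since $A$ itself is a fiber shrinking at any accumulation point. For the inductive step, after translating we may assume $a = \mathbf{0}$. First I would choose a coordinate axis along which $\mathbf{0}$ is genuinely approached: since $\mathbf{0}$ is an accumulation point of $A$, for every neighborhood of $\mathbf{0}$ there are points of $A$ distinct from $\mathbf{0}$ in it, and after a permutation of coordinates we may arrange that the first coordinate $x_1$ takes values arbitrarily close to (but not equal to) $0$ on $A$ near $\mathbf{0}$; call $E \subset K$ the image of $A$ (intersected with a small neighborhood) under the first projection, so $0$ is an accumulation point of $E$.

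Next I would apply, uniformly in $t \in E$, the one-variable limit theorem and a definable-choice/selection argument to pick out of each nonempty fiber $A_t = \{ x' \in K^{n-1} : (t,x') \in A \}$ a single definable point $\psi(t) = (\psi_2(t),\ldots,\psi_n(t))$. Concretely: the set $\bigcup_{t}\{t\}\times A_t$ is definable, so each coordinate function $t \mapsto$ (some definably-chosen element of the $j$-th projection of $A_t$) is a definable function of one variable $t$ to which Theorem~\ref{limit-th} applies. Partitioning $E$ into finitely many definable pieces $E_1,\ldots,E_r$, on each piece $E_i$ every such coordinate function has a limit $w_{i,j} \in \mathbb{P}^1(K)$ as $t \to 0$. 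Discard the pieces $E_i$ on which $0$ is not an accumulation point, and among the rest select one, say $E_1$, on which $0$ \emph{is} an accumulation point; this is possible since $0$ is an accumulation point of the finite union $E = \bigcup E_i$. If on $E_1$ all the limits $w_{1,j}$ are finite and equal to $0$, then $\Phi := \bigcup_{t \in E_1}\{t\}\times\{\psi(t)\}$ is the desired $x_1$-fiber shrinking and we are done. The difficulty is that a priori some limit $w_{1,j}$ may be nonzero or infinite, i.e.\ the chosen section runs away from $\mathbf{0}$.

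To handle this, rather than choosing an arbitrary section I would choose, within $A_t$, a point that is \emph{closest} to the origin in a suitable valuation-theoretic sense — e.g.\ pick (definably) a point $x' \in A_t$ maximizing $\min_j v(x'_j)$, or work one coordinate at a time maximizing $v(x'_j)$ subject to the earlier coordinates already being pinned down. The key point is that, because $\mathbf{0}$ is an accumulation point of $A$, there exist points of $A$ with all coordinates of arbitrarily large valuation; restricting $\Phi_t$ to those points of $A_t$ whose coordinate valuations are "as large as the geometry of $A$ permits" forces $\Phi_t \to (a_2,\ldots,a_n)$. Making this precise is exactly where relative quantifier elimination for ordered abelian groups (Cluckers--Halupczok, recalled in Section~7) enters: the set $\{(v(t), v(x_2),\ldots,v(x_n)) : (t,x')\in A\} \subset \Gamma^n$ is a definable subset of a product of copies of the value group, and QE lets us describe it — after a further finite definable partition of $E_1$ — by finitely many linear inequalities and congruences in the $v(x_j)$ over $v(t)$, so that "take $v(x_j)$ maximal given $v(t)$" is a definable condition whose solution set is nonempty for all small $t$ and whose defining bound $\to \infty$ as $v(t)\to\infty$, i.e.\ as $t\to 0$. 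With such a choice, the resulting definable family $\Phi = \bigcup_{t\in E_1'}\{t\}\times\Phi_t$ satisfies $\lim_{t\to a_1}\Phi_t = (a_2,\ldots,a_n)$ by construction, and shrinking $E_1'$ once more if necessary guarantees $\emptyset \neq \Phi_t$ for all $t$ near $a_1$. The main obstacle, as indicated, is this last step: controlling the value-group geometry of the fibers uniformly in $t$, which is precisely why the Cluckers--Halupczok elimination (and not merely Presburger arithmetic, as sufficed in the rank-one case of \cite{Now2}) is needed here.
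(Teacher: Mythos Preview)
Your proposal has a genuine gap at the step where you invoke ``a definable-choice/selection argument to pick out of each nonempty fiber $A_t$ a single definable point $\psi(t)$''. In the Denef--Pas language over an arbitrary Henselian valued field of equicharacteristic zero there is no general definable Skolem function: you cannot, in general, definably select one point from each fiber. Consequently Theorem~\ref{limit-th} (which applies to definable \emph{functions} of one variable) cannot be brought to bear the way you describe. Your fallback --- restrict $\Phi_t$ to points of $A_t$ with ``maximal'' coordinate valuations --- is closer to the right idea, but as stated it is still problematic: in a non-discrete value group maxima of the form $\sup\{v(x_j):x'\in A_t\}$ need not be attained, and your discussion tacitly conflates $t\in K$ with $v(t)\in\Gamma$ (distinct $t$ with the same $v(t)$ may have quite different fibers $A_t$).

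The paper sidesteps all of this by never attempting a selection in $K$ at all. Its key step is the purely value-group Lemma~\ref{line}: if $P\subset\Gamma^{n}$ is definable and $(\infty,\ldots,\infty)$ is an accumulation point of $P$, then there is an affine semi-line $L=\{(r_1k+\gamma_1,\ldots,r_nk+\gamma_n):k\geq 0\}$ with $r_i\in\mathbb{N}$ and $\gamma\in P$ such that $(\infty,\ldots,\infty)$ is an accumulation point of $P\cap L$. One applies this to $P=\{(v(x_1),\ldots,v(x_n)):x\in A\}$; after a permutation of coordinates one may assume $r_1>0$, and the fiber shrinking is then built as a \emph{set}-valued family $\Phi_t\subset A_t$ cut out by valuation conditions coming from $L$ --- no pointwise selection is required, and $\Phi_t\to\mathbf{0}$ because the valuations along $L$ go to infinity. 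The Cluckers--Halupczok elimination is used exactly to prove Lemma~\ref{line} (the parametrized congruences are harmless and the linear inequalities yield the semi-line); this is the precise form of the ``value-group control'' you were reaching for. Note finally that Theorem~\ref{limit-th} plays no role in the paper's proof of fiber shrinking; it is used later, in the proof of the closedness theorem itself.
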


In the case where the ground field $K$ is of rank one, the proof
of Proposition~\ref{FS} was given in~\cite[Section~6]{Now2}. In
the general case, it can be repeated verbatim once we demonstrate
the following result on definable subsets in the value group sort
$\Gamma$.

\begin{lemma}\label{line}
Let $\Gamma$ be an ordered abelian group and $P$ be a definable
subset of $\Gamma^{n}$. Suppose that $(\infty,\ldots,\infty)$ is
an accumulation point of $P$, i.e.\ for any $\delta \in \Gamma$
the set
$$ \{ x \in P: x_{1} > \delta, \ldots, x_{n} > \delta \} \neq \emptyset $$
is non-empty. Then there is an affine semi-line
$$ L = \{ (r_{1}k + \gamma_{1},\ldots,r_{n}k + \gamma_{n}): \, k
   \in \Gamma, \ k \geq 0 \} \ \ \ \text{with} \ \ r_{1},\ldots,r_{n} \in \mathbb{N}, $$
passing through a point $\gamma = (\gamma_{1},\ldots,\gamma_{n})
\in P$ and such that $(\infty,\ldots,\infty)$ is an accumulation
point of the intersection $P \cap L$ too.
\end{lemma}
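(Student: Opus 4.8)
The statement is about definable subsets of $\Gamma^n$, where $\Gamma$ is an ordered abelian group, and it asks us to find an affine semi-line with natural-number slopes through a point of $P$ such that $(\infty,\ldots,\infty)$ remains an accumulation point of the intersection. My plan is to reduce this to the Cluckers--Halupczok relative quantifier elimination for ordered abelian groups. First I would pass to $P' := \{ x \in P : x_i > \delta_0 \text{ for all } i \}$ for a fixed $\delta_0$ (or work with the "positive cone at infinity"), which is still a nonempty definable set accumulating at $(\infty,\ldots,\infty)$; this is harmless and lets me treat the $\Gamma$-valued coordinates as "large". The heart of the argument is an induction on $n$, using a one-step construction: find a definable section that sends $k \in \Gamma_{\geq 0}$ to a point of $P$ with the correct linear growth.

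**Key steps.** Step one: apply relative quantifier elimination to the formula defining $P$ to get a description of $P$ in terms of the value-group sort with (finitely many) congruence conditions modulo fixed integers $N$ and $\mathbb{Z}$-linear inequalities among the coordinates (and auxiliary imaginary sorts, which I would treat as finite parameters since the relevant data is bounded). Step two: among the finitely many "cells" in this decomposition, at least one — call it $C$ — must itself have $(\infty,\ldots,\infty)$ as an accumulation point, because a finite union accumulates at a point only if one piece does. Step three: on such a cell $C$, the defining inequalities are of the form $\sum a_i x_i \; \square \; \sum b_i x_i + c$ with integer coefficients, plus congruences $x_i \equiv c_i \pmod{N}$. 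Here I would pick a ray direction $r = (r_1,\ldots,r_n) \in \mathbb{N}^n$ that is compatible with all the strict inequalities of $C$ in the limit $k \to \infty$: concretely, $r$ should lie in the (nonempty, by Step two) cone cut out by the homogeneous parts of the inequalities. Step four: choose a base point $\gamma \in C$ with all coordinates congruent to the right residues mod $N$, and adjust $r$ so that $r_i \equiv 0 \pmod N$ for all $i$ (replace $r$ by $Nr$); then every point $\gamma + kr$ for $k \in \Gamma_{\geq 0}$ satisfies the congruences automatically, and for $k$ large enough it satisfies all the strict inequalities by the choice of the cone direction. This shows $L = \{\gamma + kr : k \geq 0\} \subset C \subset P$ eventually, hence $P \cap L$ accumulates at $(\infty,\ldots,\infty)$, and by construction $r_i \in \mathbb{N}$ and $\gamma \in P$.

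**The main obstacle.** The delicate point is extracting the cone direction $r$ and verifying it can be chosen in $\mathbb{N}^n$ rather than merely in $\Gamma^n$ or $\mathbb{Q}^n$: the inequalities defining a Cluckers--Halupczok cell have $\mathbb{Z}$-linear (not $\Gamma$-linear) homogeneous parts, so the cone of admissible asymptotic directions is a rational polyhedral cone, and its interior — nonempty precisely because $C$ accumulates at infinity — contains an integer point, which we may clear of the modulus $N$. I would also need to handle the case where some homogeneous inequality is an equality on $C$ (forcing a linear relation $\sum a_i r_i = 0$ among the slopes), which just cuts the cone down to a lower-dimensional rational polyhedral cone but does not destroy the existence of an interior integer point relative to the accumulating coordinates. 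A secondary subtlety is that the Cluckers--Halupczok language has imaginary auxiliary sorts; but the cell decomposition is finite and the auxiliary data entering the description of $P$ near infinity is a finite list of parameters, so this does not genuinely interfere. Once the integer direction is in hand, the remaining verification that $\gamma + kr \in P$ for all large $k$ is a routine check against the finitely many defining conditions.
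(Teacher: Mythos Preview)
Your approach matches the paper's own (very brief) proof sketch: both apply Cluckers--Halupczok relative quantifier elimination, observe that the parametrized congruence conditions are not an essential obstacle, and reduce to finding a rational ray direction in the polyhedral cone determined by the $\mathbb{Z}$-linear inequalities---exactly the semi-linear geometry reduction the paper invokes from the archimedean case.

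One imprecision to correct: the family-union form in Cluckers--Halupczok carries existential quantifiers over the auxiliary sorts, so the ``cells'' $\{x : \omega_i(x,\bar\theta)\}$ are parametrized by possibly infinitely many tuples $\bar\theta$, not finitely many as you assert in Step~2. Your pigeonhole argument (``a finite union accumulates at a point only if one piece does'') therefore does not apply directly at the level of individual cells. The paper's substitute for your finiteness claim is its Remark~7.3: the convex subgroups $\Gamma_{\theta_j}$ that essentially occur in the description of a proper definable subset have union not cofinal with $\Gamma$, and this is what makes the auxiliary parametrization harmless for the asymptotic analysis. Your phrase ``the auxiliary data entering the description of $P$ near infinity is a finite list of parameters'' is aiming at this point but does not land; replace it with the observation from Remark~7.3 (together with the fact, from Remark~7.2, that the $\mathbb{Z}$-linear forms $t(\bar x)$ themselves do not depend on $\bar\theta$), and the rest of your outline goes through as written.
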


In~\cite[Section~6]{Now2}, Lemma~\ref{line} was established for
archimedean groups by means of quantifier elimination in the
Presburger language. Now, in the general case, it follows in a
similar fashion by means of relative quantifier elimination for
ordered abelian groups in the language $\mathcal{L}_{qe}$ due to
Cluckers--Halupczok~\cite{C-H}, outlined in the next section.
Indeed, applying Theorem~\ref{RQE} along with Remarks~\ref{Rem1}
and~\ref{Rem2}), it is not difficult to see that the parametrized
congruence conditions which occur in the description of the set
$P$ are not an essential obstacle to finding the line $L$ we are
looking for. Therefore the lemma reduces, likewise as it was
in~\cite[Section~6]{Now2}, to a problem of semi-linear geometry.

\section{Quantifier elimination for ordered abelian
groups}

It is well known that archimedean ordered abelian groups admit
quantifier elimination in the Presburger language. Much more
complicated are quantifier elimination results for non-archimedean
groups (especially those with infinite rank), going back as far as
Gurevich~\cite{Gur}. He established a transfer of sentences from
ordered abelian groups to so-called coloured chains (i.e.\
linearly ordered sets with additional unary predicates), enhanced
later to allow arbitrary formulas. This was done in his doctoral
dissertation "The decision problem for some algebraic theories"
(Sverdlovsk, 1968), and by Schmitt in his habilitation
dissertation "Model theory of ordered abelian groups" (Heidelberg,
1982); see also the paper~\cite{Sch}. Such a transfer is a kind of
relative quantifier elimination, which allows
Gurevich--Schmitt~\cite{G-S}, in their study of the NIP property,
to lift model theoretic properties from ordered sets to ordered
abelian groups or, in other words, to transform statements on
ordered abelian groups into those on coloured chains.

\vspace{1ex}

Instead Cluckers--Halupczok~\cite{C-H} introduce a suitable
many-sorted language $\mathcal{L}_{qe}$ with main group sort
$\Gamma$ and auxiliary imaginary sorts (with canonical parameters
for some definable families of convex subgroups) which carry the
structure of a linearly ordered set with some additional unary
predicates. They provide quantifier elimination relative to the
auxiliary sorts, where each definable set in the group sort is a
union of a family of quantifier free definable sets with parameter
running a definable (with quantifiers) set of the auxiliary sorts.

\vspace{1ex}

Fortunately, sometimes it is possible to directly deduce
information about ordered abelian groups without any deeper
knowledge of the auxiliary sorts. For instance, this may be
illustrated by their theorem on piecewise linearity of definable
functions~\cite[Corollary~1.10]{C-H} as well as by
Proposition~\ref{line} and application of quantifier elimination
in the proof of the closedness theorem in Section~4.

\vspace{1ex}

Now we briefly recall the language $\mathcal{L}_{qe}$ taking care
of points essential for our applications. The main group sort
$\Gamma$ is with the constant $0$, the binary function $+$ and the
unary function $-$. The collection $\mathcal{A}$ of auxiliary
sorts consists of certain imaginary sorts:
$$ \mathcal{A} := \{ \mathcal{S}_{p}, \mathcal{T}_{p},
   \mathcal{T}^{+}_{p}: p \in \mathbb{P} \}; $$
here $\mathbb{P}$ stands for the set of prime numbers. By abuse of
notation, $\mathcal{A}$ will also denote the union of the
auxiliary sorts. In this section, we denote $\Gamma$-sort
variables by $x,y,z,\ldots$ and auxiliary sorts variables by
$\eta, \theta, \zeta, \ldots$.

\vspace{1ex}

Further, the language $\mathcal{L}_{qe}$ consists of some unary
predicates on $\mathcal{S}_{p}$, $p \in \mathbb{P}$, some binary
order relations on $\mathcal{A}$, a ternary relation
$$ x \equiv_{m,\alpha}^{m'} y \ \ \text{on} \ \
   \Gamma \times \Gamma \times \mathcal{S}_{p} \ \
   \text{for each} \ \ p \in \mathbb{P}, \ m,m' \in \mathbb{N}, $$
and finally predicates for the ternary relations
$$ x \diamond_{\alpha} y + k_{\alpha} \ \ \text{on} \ \ \Gamma \times \Gamma
   \times \mathcal{A}, $$
where $\diamond \in \{ =, <, \equiv_{m} \}$, $m \in \mathbb{N}$,
$k \in \mathbb{Z}$ and $\alpha$ is the third operand running any
of the auxiliary sorts $\mathcal{A}$.

\vspace{1ex}

We now explain the meaning of the above ternary relations, which
are defined by means of certain definable convex subgroups
$\Gamma_{\alpha}$ and $\Gamma_{\alpha}^{m'}$ of $\Gamma$ with
$\alpha \in \mathcal{A}$ and $m' \in \mathbb{N}$. Namely we write
$$ x \equiv_{m,\alpha}^{m'} y \ \ \text{iff} \ \  x-y \in
   \Gamma_{\alpha}^{m'} + m\Gamma. $$
Further, let $1_{\alpha}$ denote the minimal positive element of
$\Gamma/\Gamma_{\alpha}$ if $\Gamma/\Gamma_{\alpha}$ is discrete
and $1_{\alpha} :=0$ otherwise, and set $k_{\alpha} := k \cdot
1_{\alpha}$ for all $k \in \mathbb{Z}$. By definition we write
$$ x \diamond_{\alpha} y + k_{\alpha} \ \ \ \text{iff} \ \ \
   x \, (\bmod \, \Gamma_{\alpha}) \diamond y \ (\bmod \, \Gamma_{\alpha}) +
   k_{\alpha}. $$
(Thus the language $\mathcal{L}_{qe}$ incorporates the Presburger
language on all quotients $\Gamma/\Gamma_{\alpha}$.) Note also
that the ordinary predicates $<$ and $\equiv_{m}$ on $\Gamma$ are
$\Gamma$-quantifier-free definable in the language
$\mathcal{L}_{qe}$.

\vspace{1ex}

Now we can readily formulate quantifier elimination relative to
the auxiliary sorts (\cite[Theorem~1.8]{C-H}).

\begin{theorem}\label{RQE}
In the theory $T$ of ordered abelian groups, each
$\mathcal{L}_{qe}$-formula $\phi(\bar{x},\bar{\eta})$ is
equivalent to an $\mathcal{L}_{qe}$-formula
$\psi(\bar{x},\bar{\eta})$ in family union form, i.e.\
$$ \psi(\bar{x},\bar{\eta}) = \bigvee_{i=1}^{k} \; \exists \,
   \bar{\theta} \: \left[ \chi_{i}(\bar{\eta},\bar{\theta})
   \wedge \omega_{i}(\bar{x},\bar{\theta}) \right], $$
where $\bar{\theta}$ are $\mathcal{A}$-variables, the formulas
$\chi_{i}(\bar{\eta},\bar{\theta})$ live purely in the auxiliary
sorts $\mathcal{A}$, each $\omega_{i}(\bar{x},\bar{\theta})$ is a
conjunction of literals (i.e.\ atomic or negated atomic formulas)
and $T$ implies that the $\mathcal{L}_{qe}(\mathcal{A})$-formulas
$$ \{ \chi_{i}(\bar{\eta},\bar{\alpha}) \wedge
   \omega_{i}(\bar{x},\bar{\alpha}): \; i=1,\ldots,k, \
   \bar{\alpha} \in \mathcal{A} \} $$
are pairwise inconsistent.
\end{theorem}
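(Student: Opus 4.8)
The plan is not to reprove this statement from scratch, since it is \cite[Theorem~1.8]{C-H} and its proof rests on the Gurevich--Schmitt structure theory of ordered abelian groups (see \cite{Gur,Sch,G-S}); rather, I will indicate how it is obtained. The overall strategy is the standard one for quantifier elimination: an induction on the structure of the $\mathcal{L}_{qe}$-formula $\phi$, in which prenexing, pushing negations and splitting conjunctions/disjunctions are formal, so that the only real step is eliminating a single existential quantifier $\exists x$ ranging over the group sort $\Gamma$ from a quantifier-free formula. The auxiliary sorts $\mathcal{S}_p,\mathcal{T}_p,\mathcal{T}_p^{+}$ are designed precisely so that the \emph{spine} of $\Gamma$ — the chain of definable convex subgroups $\Gamma_\alpha$, the discreteness data $1_\alpha$, and the residual Presburger structure on each quotient $\Gamma/\Gamma_\alpha$, all stratified prime by prime — becomes visible there. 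So one first checks that the $\mathcal{A}$-part is stably embedded and itself eliminates quantifiers (being in essence a coloured linear order), and that the connecting relations $x\equiv_{m,\alpha}^{m'}y$ and $x\diamond_\alpha y+k_\alpha$ already express every atomic interaction between $\Gamma$ and $\mathcal{A}$.

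For the main step, fix a quantifier-free $\theta(x,\bar x,\bar\eta)$; by the previous remark it is a Boolean combination of relations $x\diamond_\alpha t(\bar x)+k_\alpha$ with $\diamond\in\{=,<,\equiv_m\}$ and of relations $x\equiv_{m,\alpha}^{m'}t(\bar x)$, in which only finitely many sorts $\alpha$ occur, named by $\bar\eta$ or by fresh $\mathcal{A}$-variables $\bar\theta$. Passing to a common modulus $m_0$ (the l.c.m.\ of all moduli appearing) and splitting into cases according to the relative order, in the finite chain formed by the occurring subgroups, of the classes of $x$ and of the terms $t(\bar x)$, one sees that in each case the solution set $\{x:\theta\}$ is the intersection of an interval in some quotient $\Gamma/\Gamma_\alpha$ with a union of residue classes modulo $m_0\Gamma+\Gamma_\beta$. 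Whether such an intersection is nonempty — and whether the equalities present are mutually consistent — can be read off from the length of the interval compared with $m_0\cdot 1_\alpha$ (hence from the auxiliary sorts) together with a congruence compatibility on the endpoints; none of this mentions $x$. Collecting the cases yields exactly the family union form $\bigvee_{i}\exists\bar\theta\,[\chi_i(\bar\eta,\bar\theta)\wedge\omega_i(\bar x,\bar\theta)]$, where $\chi_i$ records the combinatorial case (a statement purely in $\mathcal{A}$) and $\omega_i$ is the conjunction of the surviving literals in $\bar x$; refining the case split into a partition makes the resulting family pairwise inconsistent.

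The hard part is genuinely the \emph{uniformity over the spine}. A single formula sees only finitely many convex subgroups $\Gamma_\alpha$, but the case analysis must be carried out in a way that does not depend on how those subgroups sit inside the (possibly infinite-rank) spine, and all the attendant bookkeeping — which prime-indexed quotient is discrete, which inclusions among the $\Gamma_\alpha$ hold, how the moduli interact at each prime — has to be absorbed into the auxiliary sorts so that each $\chi_i$ really lives in $\mathcal{A}$ alone. Making the sorts $\mathcal{T}_p,\mathcal{T}_p^{+}$ rich enough to track the successor and discreteness structure at each prime, while keeping the collection $\mathcal{A}$ tame enough to admit its own quantifier elimination, is the technical heart of \cite{C-H}; once that infrastructure is in place, the innermost-quantifier induction above and the passage from a disjunction of cases to a pairwise inconsistent family are routine.
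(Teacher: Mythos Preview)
Your proposal is correct and matches the paper's treatment: the paper does not prove Theorem~\ref{RQE} at all but simply quotes it as \cite[Theorem~1.8]{C-H}, so your decision to cite rather than reprove is exactly what is done there. Your additional sketch of the Cluckers--Halupczok argument goes beyond what the paper provides and is a reasonable informal outline of that proof.
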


\begin{remark}\label{Rem1}
The sets definable (or, definable with parameters) in the main
group sort $\Gamma$ resemble to some extent the sets which are
definable in the Presburger language. Indeed, the atomic formulas
involved in the formulas $\omega_{i}(\bar{x},\bar{\theta})$ are of
the form
$$ t(\bar{x}) \diamond_{\theta_{j}} k_{\theta_{j}}, $$
where $t(\bar{x})$ is a $\mathbb{Z}$-linear combination
(respectively, a $\mathbb{Z}$-linear combination plus an element
of $\Gamma$) , the predicates
$$ \diamond \in \{ =, <, \equiv_{m}, \equiv_{m}^{m'} \} \ \ \text{with some}
   \ \ m,m' \in \mathbb{N}, $$
$\theta_{j}$ is one of the entries of $\bar{\theta}$ and $k \in
\mathbb{Z}$; here $k=0$ if $\diamond$ is $\equiv_{m}^{m'}$.
Clearly, while linear equalities and inequalities define
polyhedra, congruence conditions define sets which consist of
entire cosets of $m\Gamma$ for finitely many $m \in \mathbb{N}$.
\end{remark}

\begin{remark}\label{Rem2}
Note also that the sets given by atomic formulas $t(\bar{x})
\diamond_{\theta_{j}} k_{\theta_{j}}$ consist of entire cosets of
the subgroups $\Gamma_{\theta_{j}}$. Therefore, the union of those
subgroups $\Gamma_{\theta_{j}}$ which essentially occur in a
formula in family union form, describing a proper subset of
$\Gamma^{n}$, is not cofinal with $\Gamma$. This observation is
often useful as, for instance, in the proofs of fiber shrinking
and Theorem~\ref{clo-th}.
\end{remark}

\section{Proof of the closedness theorem}

In the proof of Theorem~\ref{clo-th}, we shall generally follow
the ideas from our previous paper~\cite[Section~7]{Now2}. We must
show that if $B$ is an $\mathcal{L}$-definable subset of $D \times
(K^{\circ})^{n}$ and a point $a$ lies in the closure of $A :=
\pi(B)$, then there is a point $b$ in the closure of $B$ such that
$\pi(b)=a$. Again, the proof reduces easily to the case $m=1$ and
next, by means of fiber shrinking (Proposition~\ref{FS}), to the
case $n=1$. We may obviously assume that $a = 0 \not \in A$.

\vspace{1ex}

Whereas in the paper~\cite{Now2} preparation cell decomposition
(due to Pas; see \cite[Theorem~3.2]{Pa1}
and~\cite[Theorem~2.4]{Now2}) was combined with quantifier
elimination in the $\Gamma$ sort in the Presburger language, here
it is combined with relative quantifier elimination in the
language $\mathcal{L}_{qe}$ considered in Section~7. In a similar
manner as in~\cite{Now2}, we can now assume that $B$ is a subset
$F$ of a cell $C$ of the form presented below. Let
$$ a(x,\xi),b(x,\xi),c(x,\xi): \, D \longrightarrow K $$
be three $\mathcal{L}$-definable functions on an
$\mathcal{L}$-definable subset $D$ of $K^{2} \times \Bbbk^{m}$ and
let $\nu \in \mathbb{N}$ is a positive integer. For each $\xi \in
\Bbbk^{m}$ set
$$ C(\xi) := \left\{ \rule{0em}{3ex} (x,y) \in K^{n}_{x} \times K_{y}:
   \ (x,\xi) \in D, \right. $$
$$ \left. \rule{0em}{3ex} v(a(x,\xi)) \lhd_{1} v((y-c(x,\xi))^{\nu}) \lhd_{2} v(b(x,\xi)), \
   \overline{ac} (y-c(x,\xi)) = \xi_{1} \right\}, $$
where $\lhd_{1},\lhd_{2}$ stand for $<, \leq$ or no condition in
any occurrence. A cell $C$ is by definition a disjoint union of
the fibres $C(\xi)$. The subset $F$ of $C$ is a union of fibers
$F(\xi)$ of the form
$$ F(\xi) := \left\{ \rule{0em}{4ex} (x,y) \in C(\xi): \ \exists \
   \bar{\theta} \ \chi(\bar{\theta}) \ \wedge\ \right. $$
$$ \bigwedge_{i \in I_{a}} \
   v(a_{i}(x,\xi)) \lhd_{1,\theta_{j_{i}}} v((y - c(x,\xi))^{\nu_{i}}), \
   \bigwedge_{i \in I_{b}} \
   v((y - c(x,\xi))^{\nu_{i}}) \lhd_{2,\theta_{j_{i}}} v(b_{i}(x,\xi)) $$
$$  \left. \wedge \ \bigwedge_{i \in I_{f}} \
    v((y - c(x,\xi))^{\nu_{i}}) \diamond_{\theta_{j_{i}}} v(f_{i}(x,\xi)) \right\}, $$
where $I_{a}$, $I_{b}$, $I_{f}$ are finite (possibly empty) sets
of indices, $a_{i}$, $b_{i}$, $f_{i}$ are $\mathcal{L}$-definable
functions, $\nu_{i},M \in \mathbb{N}$ are positive integers,
$\lhd_{1}$, $\lhd_{2}$ stand for $<$ or $\leq$, the predicates
$$ \diamond \in \{ \equiv_{M}, \neg \equiv_{M}, \equiv_{M}^{m'}, \neg \equiv_{M}^{m'} \}
   \ \ \text{with some} \ \ m' \in \mathbb{N}, $$
and $\theta_{j_{i}}$ is one of the entries of $\bar{\theta}$.

\vspace{1ex}

As before, since every $\mathcal{L}$-definable subset in the
Cartesian product $\Gamma^{n} \times \Bbbk^{m}$ of auxiliary sorts
is a finite union of the Cartesian products of definable subsets
in $\Gamma^{n}$ and in $\Bbbk^{m}$, we can assume that $B$ is one
fiber $F(\xi')$ for a parameter $\xi' \in \Bbbk^{m}$. For
simplicity, we abbreviate
$$ c(x,\xi'), a(x,\xi'), b(x,\xi'), a_{i}(x,\xi'), b_{i}(x,\xi'), f_{i}(x,\xi') $$
to
$$ c(x), a(x), b(x), a_{i}(x), b_{i}(x), f_{i}(x) $$
with $i \in I_{a}$, $i \in I_{b}$ and $i \in I_{f}$. Denote by $E
\subset K$ the common domain of these functions; then $0$ is an
accumulation point of $E$.

\vspace{1ex}

By the theorem on existence of the limit (Theorem~\ref{limit-th}),
we can assume that the limits
$$ c(0), a(0), b(0), a_{i}(0), b_{i}(0), f_{i}(0) $$
of the functions
$$ c(x), a(x), b(x), a_{i}(x), b_{i}(x), f_{i}(x) $$
when $x \rightarrow 0\,$ exist in $R$. Moreover, there is a
neighborhood $U$ of $0$ such that, each definable set
$$ \{ (v(x), v(f_{i}(x))): \; x \in (E \cap U) \setminus \{0 \} \}
   \subset \Gamma \times (\Gamma \cup \ \{
   \infty \}), \ \ i \in I_{f},  $$
is contained in an affine line with rational slope
\begin{equation}\label{affine}
q \cdot l = p_{i} \cdot k + \beta_{i}, \ \ i \in I_{f},
\end{equation}
with $p_{i},q \in \mathbb{Z}$, $q>0$, $\beta_{i} \in \Gamma$, or
in\/ $\Gamma \times \{ \infty \}$.

\vspace{1ex}

The role of the center $c(x)$ is, of course, immaterial. We may
assume, without loss of generality, that it vanishes, $c(x) \equiv
0$, for if a point $b = (0,w) \in K^{2}$ lies in the closure of
the cell with zero center, the point $(0, w + c(0))$ lies in the
closure of the cell with center $c(x)$.

\vspace{1ex}

Observe now that If $\lhd_{1}$ occurs and $a(0)= 0$, the set
$F(\xi')$ is itself an $x$-fiber shrinking at $(0,0)$ and the
point $b=(0,0)$ is an accumulation point of $B$ lying over $a=0$,
as desired. And so is the point $b=(0,0)$ if
$\lhd_{1,\theta_{j_{i}}}$ occurs and $a_{i}(0)= 0$ for some $i \in
I_{a}$, because then the set $F(\xi')$ contains the $x$-fiber
shrinking
$$ F(\xi') \cap \{ (x,y) \in E \times K: \
   v(a_{i}(x)) \lhd_{1} v(y^{\nu_{i}}) \}. $$

\vspace{1ex}

So suppose that either only $\lhd_{2}$ occur or $\lhd_{1}$ occur
and, moreover, $a(0) \neq 0$ and $a_{i}(0) \neq 0$ for all $i \in
I_{a}$. By elimination of $K$-quantifiers, the set $v(E)$ is a
definable subset of $\Gamma$. Further, it is easy to check,
applying Theorem~\ref{RQE} ff.\ likewise as it was in
Lemma~\ref{line}, that the set $v(E)$ is given near infinity only
by finitely many parametrized congruence conditions of the form
\begin{equation}\label{vE}
  v(E) = \left\{ k \in \Gamma: \ k > \beta \ \wedge \ \exists \
  \bar{\theta} \ \, \omega(\bar{\theta}) \ \wedge \
  \bigwedge_{i=1}^{s} \ m_{i} k \, \diamond_{N,\theta_{j_{i}}} \gamma_{i}
  \right\}.
\end{equation}
where $\beta, \gamma_{i} \in \Gamma$, $m_{i},N \in \mathbb{N}$ for
$i=1,\ldots,s$, the predicates
$$ \diamond \in \{ \equiv_{N}, \neg \equiv_{N}, \equiv_{N}^{m'}, \neg \equiv_{N}^{m'}
   \} \ \ \text{with some} \ \ m' \in \mathbb{N}, $$
and $\theta_{j_{i}}$ is one of the entries of $\bar{\theta}$.
Obviously, after perhaps shrinking the neighborhood of zero, we
may assume that
$$ v(a(x)) = v(a(0)) \ \ \text{and} \ \ v(a_{i}(x)) = v(a_{i}(0)) $$
for all $i \in I_{a}$ and $x \in E \setminus \{ 0 \}$, $v(x)
> \beta$.

\vspace{1ex}

Now, take an element $(u,w) \in F(\xi')$ with $u \in E \setminus
\{ 0 \}$, $v(u) > \beta$. In order to complete the proof, it
suffices to show that $(0,w)$ is an accumulation point of
$F(\xi')$. To this end, observe that, by equality~\ref{vE}, there
is a point $x \in E$ arbitrarily close to $0$ such that
$$ v(x) \in v(u) + q M N \cdot \Gamma. $$
By equality~\ref{affine}, we get
$$ v(f_{i}(x)) \in v(f_{i}(u)) + p_{i} M N \cdot \Gamma, \ \ \ i \in I_{f}, $$
and hence
\begin{equation}\label{vf}
   v\left( f_{i}(x) \right) \equiv_{M} v(f_{i}(u)), \ \ \ i \in I_{f}.
\end{equation}
Clearly, in the vicinity of zero we have
$$ v(y^{\nu}) \lhd_{2} v(b(x,\xi)) $$
and
$$ \bigwedge_{i \in I_{b}} \
   v(y^{\nu_{i}}) \lhd_{2,\theta_{j_{i}}} v(b_{i}(x,\xi)). $$
Therefore equality~\ref{vf} along with the definition of the fibre
$F(\xi')$ yield $(x,w) \in F(\xi')$, concluding the proof of the
closedness theorem.

\section{Piecewise continuity of definable functions}

Further, let $\mathcal{L}$ be the three-sorted language
$\mathcal{L}$ of Denef--Pas. The main purpose of this section is
to prove the following

\begin{theorem}\label{piece}
Let $A \subset K^{n}$ and $f: A \to \mathbb{P}^{1}(K)$ be an
$\mathcal{L}$-definable function. Then $f$ is piecewise
continuous, i.e.\ there is a finite partition of $A$ into
$\mathcal{L}$-definable locally closed subsets
$A_{1},\ldots,A_{s}$ of $K^{n}$ such that the restriction of $f$
to each $A_{i}$ is continuous.
\end{theorem}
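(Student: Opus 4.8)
The plan is to argue by induction on $n = \dim A$, using the closedness theorem (Theorem~\ref{clo-th}) together with the existence-of-the-limit result (Theorem~\ref{limit-th}) and the topological machinery it provides. First I would reduce to the case $A$ locally closed, in fact to $A$ an open subset of $K^{n}$: stratify $A$ into finitely many $\mathcal{L}$-definable locally closed pieces (possible since definable sets form a Boolean algebra and the $K$-topology is definable), and on each such piece, which is open in its closure $\overline{A}_i$, replace $K^n$ by $\overline{A}_i$ and work relatively; dimension considerations let the induction handle the lower-dimensional boundary strata. So assume $A$ is open in $K^{n}$ and $f : A \to \mathbb{P}^{1}(K)$ is $\mathcal{L}$-definable.

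Next I would consider the set $\Delta$ of points of $A$ at which $f$ is \emph{not} continuous; the key claim is that $\Delta$ is $\mathcal{L}$-definable and of dimension $< n$. Definability: continuity at $a$ is expressed by a first-order formula over the (definable) $K$-topology, quantifying over basic neighborhoods indexed by $\Gamma$. To see $\dim \Delta < n$, suppose not; then $\Delta$ contains a nonempty open subset $V$, and by cell decomposition / dimension theory one may, after a coordinate permutation, find a point $a$ and a definable curve or a one-parameter definable family through $a$ along which $f$ fails to have a limit equal to $f(a)$ — but Theorem~\ref{limit-th} shows that along any definable arc approaching $a$ the function $f$ has a limit in $\mathbb{P}^1(K)$, and the closedness theorem (via Corollary~\ref{clo-th-cor-0}, applied to the graph of $f$ restricted to a suitable compact definable set) forces these one-variable limits to be compatible, i.e. the graph of $f$ over a neighborhood of $a$ is closed, hence $f$ is continuous at $a$ after all. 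This contradiction gives $\dim \Delta < n$.

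Granting the claim, the induction closes: partition $A = (A \setminus \Delta) \sqcup \Delta$; on the open definable set $A \setminus \Delta$ the function $f$ is continuous by construction (and $A\setminus\Delta$ is locally closed), while $\Delta$ is an $\mathcal{L}$-definable set of dimension $< n$, so by the inductive hypothesis applied to $f|_\Delta$ it admits a finite partition into definable locally closed sets on which $f$ is continuous. Reassembling yields the desired finite partition of $A$ into $\mathcal{L}$-definable locally closed subsets $A_1,\dots,A_s$ with $f|_{A_i}$ continuous.

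The main obstacle is the dimension bound $\dim \Delta < n$ — equivalently, showing that $f$ is continuous on a dense open definable subset of $A$. The one-variable result Theorem~\ref{limit-th} only controls behavior along single arcs; the real work is upgrading this to genuine continuity in $n$ variables, and that is precisely where the closedness theorem does its job: it guarantees that the (definable) closure of the graph of $f$ projects properly, so that the fiber over a generic point $a$ is a single point, namely $f(a)$. Making the passage from "proper projection of the graph closure" to "continuity at generic $a$" precise — handling the $\mathbb{P}^1(K)$ target, using fiber shrinking (Proposition~\ref{FS}) to produce the approaching families, and invoking Corollary~\ref{clo-th-cor-0} on an appropriate closed bounded definable neighborhood — is the technical heart of the argument.
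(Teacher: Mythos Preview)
Your overall inductive scheme is right, and it is essentially the same as the paper's: reduce the problem to showing that $f$ is continuous on a definable piece whose complement has strictly smaller dimension, then recurse. But the way you propose to obtain that piece has a real gap.

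You locate the difficulty correctly: the claim $\dim \Delta < n$, i.e.\ that $f$ is continuous on a dense open definable subset. Your argument for this, however, does not go through as written. You invoke Theorem~\ref{limit-th} to get limits along one-variable definable arcs, and then say the closedness theorem ``forces these one-variable limits to be compatible''. It does no such thing. The closedness theorem tells you the \emph{closure} $F$ of the graph $E$ projects onto a closed set, but that closure may a priori have extra points over many $a \in A$; knowing that limits along arcs exist in $\mathbb{P}^{1}(K)$ says nothing about whether they all agree. What you actually need is a bound on the size of $F \setminus E$, and Theorem~\ref{limit-th} is not the tool for that.

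The paper avoids this entirely and never uses Theorem~\ref{limit-th} here. Instead it applies van den Dries' frontier inequality (Proposition~\ref{front-eq}): after partitioning so that the graph $E \subset K^{n} \times \mathbb{P}^{1}(K)$ is locally closed (Corollary~\ref{part1}), the frontier $\partial E = \overline{E} \setminus E$ satisfies $\dim \partial E < \dim E$. The closedness theorem then makes $\pi(\partial E)$ closed of dimension $<d$, and on $B := \pi(\overline{E}) \setminus \pi(\partial E)$ the graph $E$ coincides with $\overline{E}$ over $B$, hence is closed in $B \times \mathbb{P}^{1}(K)$; one more application of the closedness theorem shows $f|_{B}$ is continuous. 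So the missing ingredient in your sketch is precisely the frontier inequality for definable sets; once you insert it, your argument collapses to the paper's, and the detour through arcs, fiber shrinking, and one-variable limits becomes unnecessary.
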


We immediately obtain

\begin{corollary}
The conclusion of the above theorem holds for any
$\mathcal{L}$-definable function $f: A \to K$.
\end{corollary}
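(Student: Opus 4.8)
The plan is to deduce the corollary from Theorem~\ref{piece} by composing $f$ with the standard coordinate embedding of the affine line into the projective line; indeed, Theorem~\ref{piece} is phrased for $\mathbb{P}^{1}(K)$-valued functions precisely so that the $K$-valued case drops out for free. Concretely, I would set
$$ \iota \colon K \longrightarrow \mathbb{P}^{1}(K), \qquad x \longmapsto [x:1], $$
whose image is the affine chart $U_{0} = \{[x:1] : x \in K\} = \mathbb{P}^{1}(K) \setminus \{[1:0]\}$. The first thing to record is that $\iota$ is $\mathcal{L}$-definable --- it is given by polynomial data living entirely in the valued-field sort --- and that it is a homeomorphism of $K$ onto $U_{0}$ in the $K$-topology, with $U_{0}$ open in $\mathbb{P}^{1}(K)$ (its complement being the single, hence closed, point $[1:0]$) and with $\mathcal{L}$-definable continuous inverse $\iota^{-1} \colon U_{0} \to K$, $[x:1] \mapsto x$. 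All of this is routine once the $K$-topology on $\mathbb{P}^{1}(K)$ has been fixed in the standard way by glueing the two affine charts.

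Granting that, the function $g := \iota \circ f \colon A \to \mathbb{P}^{1}(K)$ is $\mathcal{L}$-definable, so Theorem~\ref{piece} furnishes a finite partition $A = A_{1} \cup \cdots \cup A_{s}$ into $\mathcal{L}$-definable locally closed subsets of $K^{n}$ on each of which $g$ is continuous. Since $g(A_{i}) \subset U_{0}$ for every $i$ and $\iota^{-1}$ is continuous on $U_{0}$, the equality $f|_{A_{i}} = \iota^{-1} \circ g|_{A_{i}}$ exhibits $f|_{A_{i}}$ as a composition of continuous maps, hence as continuous; and the $A_{i}$ are already $\mathcal{L}$-definable and locally closed in $K^{n}$. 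Thus the very same partition witnesses the piecewise continuity of $f$.

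There is no essential obstacle here: the content of the corollary is entirely contained in Theorem~\ref{piece}, and the only point that needs a moment's care is checking that the definability and topology conventions adopted for $\mathbb{P}^{1}(K)$ really do make $\iota$ a definable homeomorphism onto its open image with definable continuous inverse --- which is immediate from the construction of $\mathbb{P}^{1}(K)$.
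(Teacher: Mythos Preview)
Your argument is correct and is precisely the immediate deduction the paper has in mind: the corollary is stated right after Theorem~\ref{piece} with only the phrase ``We immediately obtain'', and the intended passage is exactly via the definable open embedding $K \hookrightarrow \mathbb{P}^{1}(K)$ that you spell out.
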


The proof of Theorem~\ref{piece} relies on two basic ingredients.
The first one is concerned with a theory of algebraic dimension
and decomposition of definable sets into a finite union of locally
closed definable subsets we begin with. It was established by van
den Dries~\cite{Dries} for certain expansions of rings (and
Henselian valued fields, in particular) which admit quantifier
elimination and are equipped with a topological system. The second
one is the closedness theorem (Theorem~\ref{clo-th}).

\vspace{1ex}

Consider an infinite integral domain $D$ with quotient field $K$.
One of the fundamental concepts introduced by van den
Dries~\cite{Dries} is that of a \emph{topological system} on a
given expansion $\mathcal{D}$ of a domain $D$ in a language
$\widetilde{\mathcal{L}}$. That concept incorporates both
Zariski-type and definable topologies. We remind the reader that
it consists of a topology $\tau_{n}$ on each set $D^{n}$, $n \in
\mathbb{N}$, such that:

1) For any $n$-ary $\widetilde{\mathcal{L}}_{D}$-terms
$t_{1},\ldots,t_{s}$, $n,s \in \mathbb{N}$, the induced map
$$ D^{n} \ni a \longrightarrow (t_{1}(a),\ldots,t_{s}(a)) \in D^{s}
$$
is continuous.

2) Every singleton $\{ a \}$, $a \in D$, is a closed subset of
$D$.

3) For any $n$-ary relation symbol $R$ of the language
$\widetilde{\mathcal{L}}$ and any sequence $1 \leq i_{1} < \ldots
< i_{k} \leq n$, $1 \leq k \leq n$, the two sets
$$ \{ (a_{i_{1}},\ldots,a_{i_{k}}) \in D^{k}: \ \mathcal{D} \models
   R((a_{i_{1}},\ldots,a_{i_{k}})^{\&}), \, a_{i_{1}} \neq 0, \ldots,
   a_{i_{k}} \neq 0 \}, $$
$$ \{ (a_{i_{1}},\ldots,a_{i_{k}}) \in D^{k}: \ \mathcal{D} \models
   \neg R((a_{i_{1}},\ldots,a_{i_{k}})^{\&}), \, a_{i_{1}} \neq 0, \ldots,
   a_{i_{k}} \neq 0 \} $$
are open in $D^{k}$; here $(a_{i_{1}},\ldots,a_{i_{k}})^{\&}$
denotes the element of $D^{n}$ whose $i_{j}$-th coordinate is
$a_{i_{j}}$, $j=1,\ldots,k$, and whose remaining coordinates are
zero.

\vspace{1ex}

Finite intersections of closed sets of the form
$$ \{ a \in D^{n}: t(a)=0 \}, $$
where $t$ is an $n$-ary $\widetilde{\mathcal{L}}_{D}$-term, will
be called \emph{special closed subsets} of $D^{n}$. Finite
intersections of open sets of the form
$$ \{ a \in D^{n}: t(a) \neq 0 \}, $$
$$ \{ a \in D^{n}: \ \mathcal{D} \models
   R((t_{i_{1}}(a),\ldots,t_{i_{k}}(a))^{\&}), \, t_{i_{1}}(a) \neq 0, \ldots,
   t_{i_{k}}(a) \neq 0 \} $$
or
$$ \{ a \in D^{n}: \ \mathcal{D} \models
   \neg R((t_{i_{1}}(a),\ldots,t_{i_{k}}(a))^{\&}), \, t_{i_{1}}(a) \neq 0, \ldots,
   t_{i_{k}}(a) \neq 0 \}, $$
where $t,t_{i_{1}},t_{i_{k}}$ are
$\widetilde{\mathcal{L}}_{D}$-terms, will be called \emph{special
open subsets} of $D^{n}$. Finally, an intersection of a special
open and a special closed subsets of $D^{n}$ will be called a
\emph{special locally closed} subset of $D^{n}$. Every
quantifier-free $\widetilde{\mathcal{L}}$-definable set is a
finite union of special locally closed sets.

\vspace{1ex}

Suppose now that the language $\widetilde{\mathcal{L}}$ extends
the language of rings and has no extra function symbols of arity
$>0$ and that an $\widetilde{\mathcal{L}}$-expansion $\mathcal{D}$
of the domain $D$ under study admits quantifier elimination and is
equipped with a topological system such that every non-empty
special open subset of $D$ is infinite. These conditions ensure
that $\mathcal{D}$ is algebraically bounded and algebraic
dimension is a dimension function on $\mathcal{D}$
(\cite[Proposition~2.15 and~2.7]{Dries}). Algebraic dimension is
the only dimension function on $\mathcal{D}$ whenever, in
addition, $D$ is a non-trivially valued field and the topology
$\tau_{1}$ is induced by its valuation. Then, for simplicity, the
algebraic dimension of an $\widetilde{\mathcal{L}}$-definable set
$E$ will be denoted by $\dim E$.

\vspace{1ex}

Now we recall the following two basic results from the
paper~\cite[Propositions~2.17 and~2.23]{Dries}:

\begin{proposition}\label{partition}
Every $\widetilde{\mathcal{L}}$-definable subset of $D^{n}$ is a
finite union of intersections of Zariski closed with special open
subsets of $D^{n}$ and, a fortiori, a finite union of locally
closed $\widetilde{\mathcal{L}}$-definable subsets of $D^{n}$.
\end{proposition}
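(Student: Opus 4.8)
The plan is to follow van den Dries~\cite{Dries} and reduce everything to the structure theory guaranteed by quantifier elimination together with the topological system. First I would invoke quantifier elimination: since $\mathcal{D}$ admits elimination of quantifiers in $\widetilde{\mathcal{L}}$, every $\widetilde{\mathcal{L}}$-definable subset of $D^{n}$ is quantifier-free definable, hence — by the remark recorded just above the statement — a finite union of special locally closed subsets of $D^{n}$. Thus it suffices to treat a single special locally closed set $S = O \cap F$, where $O$ is special open and $F$ is special closed. Since a special closed set is by definition a finite intersection of zero sets $\{t(a)=0\}$ of $\widetilde{\mathcal{L}}_{D}$-terms and the language has no function symbols of positive arity beyond those of rings, each such term is a polynomial with coefficients in $D$; hence $F$ is Zariski closed. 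So the task reduces to showing that $S = O \cap F$ with $F$ Zariski closed and $O$ special open is a finite union of ``(Zariski closed) $\cap$ (special open)'' sets — but that is immediate, $S$ itself already has this shape. The second assertion, that such a set is locally closed in the topology $\tau_{n}$, follows because Zariski closed sets are $\tau_{n}$-closed (zero sets of terms are closed by axiom~1) together with axiom~2 for the system) and special open sets are $\tau_{n}$-open by axioms~1) and~3); an intersection of a closed and an open set is locally closed.

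The one genuine subtlety — and the step I expect to be the main obstacle — is the passage from a \emph{special} open set to an honest Zariski-closed-intersect-open decomposition in the presence of the inequality constraints $t_{i_j}(a)\neq 0$ attached to the relation-symbol clauses $\mathcal{D}\models R\bigl((t_{i_1}(a),\dots,t_{i_k}(a))^{\&}\bigr)$. These clauses are not, a priori, unions of Zariski-constructible conditions, because $R$ may be an arbitrary relation symbol whose interpretation is only controlled through axiom~3). The way around this is exactly axiom~3): the sets cut out by $R$ (resp.\ $\neg R$) together with the nonvanishing of the relevant terms are open by hypothesis, so a special open set really is open in $\tau_n$, and we do not need to ``constructibilize'' the $R$-clauses at all — they simply contribute to the open factor. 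Hence the decomposition we produce is ``Zariski closed $\cap$ special open,'' which is what the proposition claims; refining further to locally closed in $\tau_n$ is then the trivial observation above.

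Finally, for the ``a fortiori'' clause one notes that a finite union of locally closed sets is what is asserted, and each intersection of a Zariski closed set with a special open set is locally closed by the argument just given; no extra work is needed. Altogether the proof is essentially a bookkeeping argument: quantifier elimination supplies the quantifier-free normal form, the hypothesis on the language (rings plus relations, no new functions) turns special closed sets into Zariski closed sets, and the axioms of a topological system guarantee the topological statements about openness and closedness. The proposition is therefore a direct transcription of~\cite[Proposition~2.17]{Dries}, and I would cite that for the details while indicating the above reduction.
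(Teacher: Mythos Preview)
Your proposal is correct and aligns with the paper's treatment: the paper gives no proof at all for this proposition, simply recalling it from van den Dries~\cite[Proposition~2.17]{Dries} with a $\Box$. Your sketch of the underlying argument (quantifier elimination yields special locally closed sets; the hypothesis that $\widetilde{\mathcal{L}}$ has no extra function symbols of arity $>0$ forces terms to be polynomials, so special closed means Zariski closed; axioms~1)--3) make special open sets $\tau_n$-open) is accurate and is exactly the content of the cited result, so citing \cite{Dries} as you propose is precisely what the paper does.
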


\begin{proposition}\label{front-eq}
Let $E$ be an $\widetilde{\mathcal{L}}$-definable subset of
$D^{n}$, and let $\overline{E}$ stand for its closure and
$\partial E := \overline{E} \setminus E$ for its frontier. Then
$$ \mathrm{alg.dim}\, (\partial E) < \mathrm{alg.dim}\, (E). $$
\end{proposition}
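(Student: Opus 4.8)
\textbf{Proof plan for Proposition~\ref{front-eq}.}
The plan is to reduce the statement to the affine/Zariski setting using the structural decomposition already available, and then to argue locally. First I would recall that by Proposition~\ref{partition} the definable set $E$ is a finite union $E = E_1 \cup \cdots \cup E_t$ of locally closed definable sets, and since $\partial(E_1 \cup \cdots \cup E_t) \subset \bigcup_j \partial E_j$ while algebraic dimension is finitely additive and monotone, it suffices to treat a single locally closed piece. So I may assume $E$ itself is locally closed, say $E = W \cap U$ with $W$ Zariski closed and $U$ special open; replacing $W$ by the Zariski closure of $E$ and noting that $E$ is then Zariski dense and open in $W$, I reduce to showing $\dim(\overline{W} \setminus E) < \dim W$ in the $K$-topology, where $W$ is an irreducible affine $K$-variety (after further decomposing $W$ into its irreducible components, which again only splits the dimension additively) and $E$ is a nonempty Zariski-open definable subset of $W(K)$.

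Next, the key point is that for an irreducible affine $K$-variety $W$ of dimension $d$ and a nonempty Zariski-open $\emptyset \neq V \subset W$, the set $V(K)$ is dense in $W(K)$ in the $K$-topology: this is exactly the density property, which holds for our Henselian nontrivially valued field $K$ by the discussion following Proposition~\ref{implicit} (for the singular case one uses that $W(K)$ agrees with $W_{\mathrm{reg}}(K)$ up to a set contained in a proper Zariski-closed subset, which has strictly smaller algebraic dimension, and applies the density property on the smooth locus). Granting density, the frontier $\partial E = \overline{E} \setminus E$ is contained in $(\overline{E} \cap W(K)) \setminus E$ together with $\overline{E} \setminus W(K)$; the latter is empty since $W(K)$ is closed in $K^n$ (being a Zariski-closed set) and $E \subset W(K)$, so $\overline{E} \subset W(K)$. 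Thus $\partial E \subset W(K) \setminus E$, and since $E = V(K)$ for a nonempty Zariski-open $V \subset W$, the complement $W(K) \setminus E \subset (W \setminus V)(K)$ is contained in a proper Zariski-closed subset of the $d$-dimensional irreducible $W$, hence has algebraic dimension $\leq d-1 < d = \dim E$ (here $\dim E = d$ because $E$ is Zariski dense in $W$, by density again).

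The step I expect to be the main obstacle is organizing the reduction cleanly while keeping track of two potentially different topologies — the Zariski topology used in Proposition~\ref{partition} and the $K$-topology in which the frontier is taken. One must be careful that ``locally closed'' in the conclusion and intermediate steps refers to the $K$-topology, and that the closure $\overline{E}$ and frontier $\partial E$ are $K$-topological, while the decomposition tools produce Zariski-type data; the bridge between the two is precisely the density property together with the fact that Zariski-closed sets are $K$-closed. A secondary technical point is handling the term $t(a) \neq 0$ conditions and the relation-symbol conditions defining the ``special open'' sets: since the underlying language here is just that of rings (the $K$-topology comes from the valuation, with the relevant relation being essentially $v(x) \leq v(y)$, i.e.\ $x \in yR$, which is $K$-open away from the zero locus of the relevant coordinates), the special open subsets are genuinely $K$-open, so intersecting a Zariski-dense open subset of $W$ with such a set does not decrease dimension. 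Once these compatibilities are pinned down, the inequality $\mathrm{alg.dim}(\partial E) < \mathrm{alg.dim}(E)$ follows from the displayed chain of inclusions above.
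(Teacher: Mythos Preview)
The paper does not supply its own proof of this proposition: it is simply quoted from \cite[Proposition~2.23]{Dries}, so there is nothing in the paper to compare your argument against. That said, your sketch contains a genuine gap.

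The reduction to a single piece $E = W \cap U$ with $W$ Zariski closed and $U$ special open is fine, and after replacing $W$ by the Zariski closure of $E$ you correctly have that $E$ is Zariski dense in $W$ and $\tau$-open in $W$. But you then assert that ``$E$ is a nonempty Zariski-open definable subset of $W(K)$'' and write ``$E = V(K)$ for a nonempty Zariski-open $V \subset W$''. This is not justified: special open sets are $\tau$-open (equivalently $K$-open in the valued-field case), not Zariski open. Your bound $\partial E \subset W \setminus E$ is correct, but the next step, $W \setminus E \subset (W \setminus V)(K)$ for a proper Zariski-closed $W \setminus V$, collapses once $E$ is merely $\tau$-open.

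Concretely, take $K$ Henselian nontrivially valued, $n=2$, and $E = \{(x,y)\in K^{2}: v(y) > v(x)\}$. Then $W := \text{Zar-cl}(E) = K^{2}$, the set $E$ is $\tau$-open in $K^{2}$, and $W \setminus E$ contains $\{(x,y): v(y) < v(x)\}$, which is Zariski dense in $K^{2}$; hence $\dim(W\setminus E)=2=\dim W$. Your inequality $\dim(W\setminus E) < \dim W$ fails, even though the proposition itself holds here ($\partial E = \{(0,0)\}$). So the inclusion $\partial E \subset W \setminus E$ is far too coarse to conclude.

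The missing ingredient is precisely what van den Dries supplies: one needs a lemma to the effect that a definable set which is Zariski dense in an irreducible $W$ has nonempty $\tau$-interior in $W$, and then to play this off against the fact that $\partial E$ is disjoint from $E$ while contained in $\overline{E}$. The density property alone (which in any case is specific to the Henselian valued situation, whereas the proposition is stated for general $D$) does not replace this step.
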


It is not difficult to strengthen the former proposition as
follows.

\begin{corollary}\label{part1}
Every $\widetilde{\mathcal{L}}$-definable set is a finite disjoint
union of locally closed sets.
\end{corollary}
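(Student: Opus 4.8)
The plan is to upgrade Proposition~\ref{partition}, which already gives a finite (possibly overlapping) decomposition of an $\widetilde{\mathcal{L}}$-definable set $E$ into locally closed definable pieces, to a \emph{disjoint} such decomposition. The natural device is an induction on $d := \mathrm{alg.dim}\,(E)$, peeling off the top-dimensional part first.

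First I would apply Proposition~\ref{partition} to write $E = \bigcup_{j=1}^{N} L_{j}$ with each $L_{j}$ locally closed and definable. The key observation is that the closure $\overline{L_{j}}$ of a locally closed set is again definable (closure is a definable operation in this setting, since the topology $\tau_{1}$ is the valuation topology and closures of definable sets are definable by quantifier elimination together with the topological system axioms), and $L_{j}$ is open in $\overline{L_{j}}$; hence the frontier $\partial L_{j} = \overline{L_{j}} \setminus L_{j}$ is a definable set with $\mathrm{alg.dim}\,(\partial L_{j}) < \mathrm{alg.dim}\,(L_{j}) \le d$ by Proposition~\ref{front-eq}. Consider the open-in-$E$ part: more precisely, let $U$ be the set of points of $E$ that have a neighbourhood meeting $E$ only in dimension-$d$ strata — concretely one can take $U := E \setminus \bigcup_{j} \partial L_{j}$ after first discarding those $L_{j}$ with $\mathrm{alg.dim}\,(L_{j}) < d$; then $U$ is a finite union of the top-dimensional $L_{j}$ minus a lower-dimensional definable closed-in-$E$ set, so $U$ is itself locally closed (it is open in $E$ by construction and is the intersection of $E$ with an open set). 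Set $E' := E \setminus U$; this is definable and, since we have removed a relatively open dense-in-top-dimension subset, $\mathrm{alg.dim}\,(E') < d$.

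Now the inductive step: by the induction hypothesis applied to $E'$ (which has strictly smaller algebraic dimension), $E'$ is a finite \emph{disjoint} union of locally closed definable sets, say $E' = \bigsqcup_{k} M_{k}$. Then $E = U \sqcup \bigsqcup_{k} M_{k}$ is the desired finite disjoint union of locally closed definable sets; the base case $d = -\infty$ (i.e.\ $E = \emptyset$) or $d = 0$ (a finite set of points, each a closed singleton) is trivial by axiom~2) of a topological system. Note that local closedness is preserved here because each $M_{k}$ is locally closed in $E'$, hence in $E$, and being locally closed in $K^{n}$ is equivalent to being locally closed in any subspace containing it that is itself locally closed — which $E'$ need not be, so one must instead argue directly: $M_{k}$ locally closed in $E'$ and $E'$ closed in $E \setminus U = E'$ tautologically, but $E'$ is closed in $E$ (it is the complement of the $E$-open set $U$), and a set locally closed in a closed subspace is locally closed in the ambient space. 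That last point is the one genuinely delicate step and the main thing to get right: one wants $E'$ to be relatively \emph{closed} in $E$, which is exactly why $U$ must be chosen relatively open in $E$, and this in turn is why we first discard the lower-dimensional $L_{j}$'s and only then subtract the frontiers. With $E'$ closed in $E$ and each $M_{k}$ locally closed in $E'$, each $M_{k}$ is locally closed in $E$, and since $E$ itself was an arbitrary definable set — not assumed locally closed — one finally invokes that all of this takes place inside $K^{n}$ with the subspace topology, so "locally closed in $K^{n}$" is what the statement asks and is what we get, because $M_{k} = V_{k} \cap E'$ with $V_{k}$ locally closed in $K^{n}$ and $E'$ closed in $K^{n}$ (as $E'$ is closed in $E$ only, one should instead track $E'$ as locally closed in $K^{n}$ throughout the induction, which it is, being a difference of definable locally closed sets). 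The cleanest way to avoid these subtleties is to carry in the induction the stronger statement "every definable subset of a definable locally closed subset of $K^{n}$ is a finite disjoint union of definable locally closed subsets of $K^{n}$", applied with the ambient locally closed set shrinking at each stage.
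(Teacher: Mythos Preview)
The paper gives no explicit proof, merely calling it ``not difficult'' to strengthen Proposition~\ref{partition}; your induction on $d=\mathrm{alg.dim}\,E$ via the frontier inequality is the natural approach and does work, but your justification of the key step has a gap.

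The problem is the claim that $U$ is locally closed in $K^{n}$. You argue that $U$ is open in $E$, i.e.\ $U=E\cap V$ with $V$ open in $K^{n}$; but this yields local closedness of $U$ in $K^{n}$ only if $E$ itself is already locally closed, which is exactly what is not assumed. Your later discussion of relative versus ambient local closedness shows you sensed this, yet the suggested fixes (tracking $E'$ as locally closed, strengthening the induction hypothesis) do not close the gap and are in any case unnecessary: the inductive hypothesis already produces pieces $M_{k}$ that are locally closed \emph{in $K^{n}$}, so no subspace bookkeeping is needed once $U$ itself is handled correctly.

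Here is the direct fix. With $C:=\bigcup_{j:\dim L_{j}=d}\partial L_{j}$ (closed in $K^{n}$, $\dim C<d$), set $U:=\bigl(\bigcup_{j:\dim L_{j}=d}L_{j}\bigr)\setminus C$. Since $\partial L_{j}\subset C$ forces $L_{j}\setminus C=\overline{L_{j}}\setminus C$ for each top-dimensional $j$, one gets
\[
U \;=\; \Bigl(\,\bigcup_{j:\dim L_{j}=d}\overline{L_{j}}\Bigr)\cap\bigl(K^{n}\setminus C\bigr),
\]
a closed set intersected with an open set, hence locally closed in $K^{n}$ outright. Then $E\setminus U\subset C\cup\bigcup_{j:\dim L_{j}<d}L_{j}$ has dimension $<d$, and induction finishes cleanly. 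An even shorter variant, which bypasses Proposition~\ref{partition} altogether: take $U:=\overline{E}\setminus\overline{\partial E}$, check that this equals $E\setminus\overline{\partial E}\subset E$ (because $\overline{E}\setminus E=\partial E\subset\overline{\partial E}$), and apply induction to $E\cap\overline{\partial E}$, whose dimension is $<d$ by Proposition~\ref{front-eq}.
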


Quantifier elimination due to Pas~\cite[Theorem~4.1]{Pa1} (more
precisely, elimination of $K$-quantifiers) enables translation of
the language $\mathcal{L}$ of Denef--Pas on $K$ into a language
$\widetilde{\mathcal{L}}$ described above, which is equipped with
the topological system wherein $\tau_{n}$ is the $K$-topology on
$K^{n}$, $n \in \mathbb{N}$. Indeed, we must augment the language
of rings by adding extra relation symbols for the inverse images
under the valuation and angular component map of relations on the
value group and residue field, respectively. More precisely, we
must add the names of sets of the form
$$ \{ a \in K^{n}: (v(a_{1}),\ldots,v(a_{n})) \in P \} $$
and
$$ \{ a \in K^{n}: (\overline{ac}\, a_{1},\ldots,\overline{ac}\,
   a_{n}) \in Q \}, $$
where $P$ and $Q$ are definable subsets of $\Gamma^{n}$ and
$\Bbbk^{n}$ (as the auxiliary sorts of the language
$\mathcal{L}$), respectively.

\vspace{1ex}

Summing up, the foregoing results apply in the case of Henselian
non-trivially valued fields with the three-sorted language
$\mathcal{L}$ of Denef--Pas. Now we can readily prove
Theorem~\ref{piece}.

\begin{proof}
Consider an $\mathcal{L}$-definable function $f: A \to
\mathbb{P}^{1}(K)$ and its graph
$$ E := \{ (x,f(x)): x \in A \} \subset K^{n} \times \mathbb{P}^{1}(K). $$
We shall proceed with induction with respect to the dimension
$$ d = \dim A = \dim \, E $$
of the source and graph of $f$. By Corollary~\ref{part1}, we can
assume that the graph $E$ is a locally closed subset of $K^{n}
\times \mathbb{P}^{1}(K)$ of dimension $d$ and that the conclusion
of the theorem holds for functions with source and graph of
dimension $< d$.

\vspace{1ex}

Let $F$ be the closure of $E$ in $K^{n} \times \mathbb{P}^{1}(K)$
and $\partial E := F \setminus E$ be the frontier of $E$. Since
$E$ is locally closed, the frontier $\partial E$ is a closed
subset of $K^{n} \times \mathbb{P}^{1}(K)$ as well. Let
$$ \pi: K^{n} \times \mathbb{P}^{1}(K) \longrightarrow K^{n} $$
be the canonical projection. Then, by virtue of the closedness
theorem, the images $\pi(F)$ and $\pi(\partial E)$ are closed
subsets of $K^{n}$. Further,
$$ \dim \, F = \dim \, \pi(F) = d $$
and
$$ \dim \, \pi(\partial E) \leq \dim \, \partial E < d; $$
the last inequality holds by Proposition~\ref{front-eq}. Putting
$$ B := \pi(F) \setminus \pi(\partial E) \subset \pi(E) = A, $$
we thus get
$$ \dim \, B = d \ \ \text{and} \ \ \dim \, (A \setminus B) < d.
$$
Clearly, the set
$$ E_{0} := E \cap (B \times \mathbb{P}^{1}(K)) = F \cap (B \times
   \mathbb{P}^{1}(K)) $$
is a closed subset of $B \times \mathbb{P}^{1}(K)$ and is the
graph of the restriction
$$ f_{0}: B \longrightarrow \mathbb{P}^{1}(K) $$
of $f$ to $B$. Again, it follows immediately from the closedness
theorem that the restriction
$$ \pi_{0} : E_{0} \longrightarrow B $$
of the projection $\pi$ to $E_{0}$ is a definably closed map.
Therefore $f_{0}$ is a continuous function. But, by the induction
hypothesis, the restriction of $f$ to $A \setminus B$ satisfies
the conclusion of the theorem, whence so does the function $f$.
This completes the proof.
\end{proof}

\section{Curve selection}

We now pass to curve selection over non-locally compact ground
fields under study. While the real version of curve selection goes
back to the papers~\cite{B-Car,Wal} (see
also~\cite{Loj,Miln,BCR}), the $p$-adic one was achieved in the
papers~\cite{S-Dries,De-Dries}.

In this section we give two versions of curve selection which are
counterparts of the ones from our paper~\cite[Proposition~8.1
and~8.2]{Now2} over rank one valued fields. The first one is
concerned with valuative semialgebraic sets and we can repeat
verbatim its proof which relies on transformation to a normal
crossing by blowing up and the closedness theorem.


\vspace{1ex}

By a valuative semialgebraic subset of $K^{n}$ we mean a (finite)
Boolean combination of elementary valuative semialgebraic subsets,
i.e.\ sets of the form
$$ \{ x \in K^{n}: \ v(f(x)) \leq v(g(x)) \}, $$
where $f$ and $g$ are regular functions on $K^{n}$. We call a map
$\varphi$ semialgebraic if its graph is a valuative semialgebraic
set.


\begin{proposition}\label{CSL}
Let $A$ be a valuative semialgebraic subset of $K^{n}$. If a point
$a \in K^{n}$ lies in the closure (in the $K$-topology) of $A
\setminus \{ a \}$, then there is a semialgebraic map $\varphi : R
\longrightarrow K^{n}$ given by algebraic power series such that
$$ \varphi(0)=a \ \ \ {\text and} \ \ \  \varphi( R
   \setminus \{ 0 \}) \subset A \setminus \{ a \}. $$
\end{proposition}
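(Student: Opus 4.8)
The strategy is to reduce, via resolution of singularities, to the case where the set $A$ has a particularly simple local form near $a$, and then read off the curve directly from the normal-crossing description. First I would arrange $a = \mathbf{0}$. Since $A$ is valuative semialgebraic, it is a Boolean combination of sets of the form $\{v(f) \le v(g)\}$ with $f,g$ regular on $K^n$; collecting all the regular functions that occur and passing to one member of the Boolean combination, we may assume $A$ is cut out near $\mathbf{0}$ by finitely many conditions $v(f_i) \lhd v(g_i)$. Applying embedded resolution of singularities to the product of all the $f_i,g_i$ (here is where Corollaries~\ref{clo-th-cor-3} and~\ref{clo-th-cor-4} are the point: the descent property guarantees that it suffices to find the curve upstairs on the modification $Y$ and then push it down to $K^n$), we get a proper birational $\sigma: Y \to K\mathbb{A}^n$, with $Y$ smooth, such that all the $f_i \circ \sigma$, $g_i \circ \sigma$ are monomials in suitable local coordinates times units. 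Because $\mathbf{0}$ lies in the closure of $A \setminus \{\mathbf 0\}$ and $\sigma: Y(K)\cap\sigma^{-1}(A) \to A$ is a definably closed quotient map, there is a point $b \in \sigma^{-1}(\mathbf 0)$ in the closure of $\sigma^{-1}(A \setminus \{\mathbf 0\})$.

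Next I would work in an affine chart of $Y$ around $b$ with coordinates $u = (u_1,\dots,u_n)$ centered at $b$, in which every $f_i \circ \sigma$ and $g_i \circ \sigma$ is $u^{\alpha_i}\cdot(\text{unit})$, $u^{\beta_i}\cdot(\text{unit})$. In these coordinates each condition $v(f_i \circ\sigma) \lhd v(g_i\circ\sigma)$ becomes, near $b$, a condition on a $\mathbb{Z}$-linear combination of the $v(u_j)$, i.e.\ the preimage $\sigma^{-1}(A)$ is described near $b$ by finitely many linear inequalities and congruences among $v(u_1),\dots,v(u_n)$ — this is exactly the semilinear picture in the value group. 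Since $b$ is an accumulation point of $\sigma^{-1}(A\setminus\{\mathbf 0\})$, the corresponding subset of $\Gamma^n$ has $(\infty,\dots,\infty)$ as an accumulation point, so by Lemma~\ref{line} (applied to the $\Gamma$-part of the description, using Theorem~\ref{RQE} and Remarks~\ref{Rem1},~\ref{Rem2} to dispose of the congruence conditions near infinity) there is an affine semi-line $L = \{(r_1 k + \gamma_1, \dots, r_n k + \gamma_n): k \ge 0\}$ with $r_j \in \mathbb N$ lying in that set and still accumulating at infinity. Choosing $\gamma_j$ to be the valuations of suitable elements and a parameter $t$ with $v(t) = 1$ (after an initial finite ramification, replacing $t$ by a root if necessary), the assignment $u_j = c_j\, t^{r_j}$ for appropriate constants $c_j \in K^\times$ defines an algebraic power series curve $\psi: R \to Y(K)$ with $\psi(0) = b$ and $\psi(R\setminus\{0\}) \subset \sigma^{-1}(A\setminus\{\mathbf 0\})$, at least on a small disc $a\cdot R$; rescaling the parameter absorbs the disc.

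Finally, set $\varphi := \sigma \circ \psi$. Since $\sigma$ is given by regular (in particular algebraic-power-series) functions and $\psi$ is given by algebraic power series, $\varphi: R \to K^n$ is given by algebraic power series (compositions of algebraic power series are algebraic, by the Artin--Mazur material of Section~3, and $\varphi$ is semialgebraic because blow-up charts and monomial curves have valuative semialgebraic graphs); moreover $\varphi(0) = \sigma(b) = \mathbf 0 = a$ and $\varphi(R\setminus\{0\}) = \sigma(\psi(R\setminus\{0\})) \subset \sigma(\sigma^{-1}(A\setminus\{\mathbf 0\})) \subset A\setminus\{a\}$, which is what we want. (One must check that $\varphi(t) \ne a$ for $t \ne 0$; this holds because $\sigma$ is an isomorphism away from the exceptional locus and the curve $\psi$ meets the exceptional divisor only at $t = 0$, the $r_j > 0$ for the exceptional coordinates guaranteeing this.)

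\textbf{Main obstacle.} The delicate point is the passage from the analytic/geometric situation to the purely semilinear one and back: one has to be sure that the finitely many conditions defining $A$, after pullback by $\sigma$ and localization at $b$, genuinely become conditions expressible through the valuations $v(u_j)$ alone — the units contribute nothing to the valuation, so this is fine — and, more subtly, that the congruence conditions which may appear (from the predicates $\equiv_m$, $\equiv_m^{m'}$ in the $\mathcal{L}_{qe}$-description, as in~\eqref{vE} in Section~8) do not obstruct finding the semi-line $L$; this is precisely the content of Remark~\ref{Rem2}, that the relevant subgroups $\Gamma_{\theta_j}$ are not cofinal, so near infinity the line $L$ can be chosen to satisfy all of them simultaneously. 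The rest is the same blow-up-and-descend argument already used over rank one fields in~\cite[Proposition~8.1]{Now2}, now legitimate over arbitrary Henselian valued fields of equicharacteristic zero thanks to Theorem~\ref{clo-th} and its corollaries.
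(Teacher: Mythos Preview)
Your overall strategy---blow up to simultaneous normal crossings, use Corollary~\ref{clo-th-cor-3} to find a point $b\in\sigma^{-1}(0)$ in the closure of $\sigma^{-1}(A\setminus\{0\})$, read off a monomial arc in local coordinates at $b$, and push it down by $\sigma$---is exactly the approach the paper intends (it refers to~\cite[Proposition~8.1]{Now2}, saying the proof carries over verbatim using only transformation to a normal crossing and the closedness theorem).

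There is, however, a genuine confusion in your execution that produces a gap. After pullback by $\sigma$, the conditions $v(f_i\circ\sigma)\lhd v(g_i\circ\sigma)$ become, in normal-crossing coordinates at $b$, pure $\mathbb{Z}$-linear inequalities $\langle\alpha_i-\beta_i,\,(v(u_1),\dots,v(u_n))\rangle\lhd 0$: there are \emph{no} congruence conditions and \emph{no} angular-component conditions, because valuative semialgebraic sets are built only from predicates $v(f)\le v(g)$. This is precisely what separates Proposition~\ref{CSL} from Proposition~\ref{GCSL}. For such a polyhedral cone $P\subset\Gamma^n$ accumulating at $(\infty,\dots,\infty)$ one finds, by elementary rational polyhedral geometry (no need for Lemma~\ref{line}, Theorem~\ref{RQE}, or Remarks~\ref{Rem1}--\ref{Rem2}), a direction $(r_1,\dots,r_n)\in\mathbb{N}^n$ and a base point so that the \emph{entire} semi-line eventually lies in $P$; hence the monomial arc $\psi(t)=(w_1t^{r_1},\dots,w_nt^{r_n})$ lands in $\sigma^{-1}(A\setminus\{0\})$ for all $t$ in a disc, and rescaling yields $\varphi(R\setminus\{0\})\subset A\setminus\{a\}$. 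By contrast, if congruences were genuinely present as you suggest, Lemma~\ref{line} would only give that $P\cap L$ \emph{accumulates} at infinity, and then your curve would meet $\sigma^{-1}(A)$ only on a definable subset $E\subset R$---exactly the weaker conclusion of Proposition~\ref{GCSL}---and no rescaling would recover a full disc. So your claim ``$\psi(R\setminus\{0\})\subset\sigma^{-1}(A\setminus\{\mathbf 0\})$, at least on a small disc'' does not follow from the argument you actually wrote; it follows once you drop the congruence machinery and use that $P$ is polyhedral.

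A minor point: ``a parameter $t$ with $v(t)=1$'' is meaningless for a general value group $\Gamma$; simply let $t$ range over $R$ and absorb constants by rescaling, as in the proof of Proposition~\ref{GCSL}.
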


\vspace{1ex}

We now turn to the general version of curve selection for
$\mathcal{L}$-definable sets. Under the circumstances, we apply
relative quantifier elimination in a many-sorted language due to
Cluckers--Halupczok rather than simply quantifier elimination in
the Presburger language for rank one valued fields. The passage
between the two corresponding reasonings for curve selection is
similar to that for fiber shrinking. Nevertheless we provide a
detailed proof for more clarity and the reader's convenience. Note
that both fiber shrinking and curve selection apply
Lemma~\ref{line}.

\begin{proposition}\label{GCSL}
Let $A$ be an $\mathcal{L}$-definable subset of $K^{n}$. If a
point $a \in K^{n}$ lies in the closure (in the $K$-topology) of
$A \setminus \{ a \}$, then there exist a semialgebraic map
$\varphi : R \longrightarrow K^{n}$ given by algebraic power
series and an $\mathcal{L}$-definable subset $E$ of $R$ with
accumulation point $0$ such that
$$ \varphi(0)=a \ \ \ {\text and} \ \ \  \varphi( E
   \setminus \{ 0 \}) \subset A \setminus \{ a \}. $$
\end{proposition}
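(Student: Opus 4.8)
The plan is to reduce the general $\mathcal{L}$-definable situation to the semialgebraic one treated in Proposition~\ref{CSL}, in essentially the same way that Lemma~\ref{line} lets fiber shrinking in the general case be reduced to the rank one case. First I would invoke the closedness theorem together with fiber shrinking to arrange a convenient normal form. Since $a$ lies in the closure of $A \setminus \{a\}$, after translating we may assume $a = \mathbf{0}$. Applying Pas's preparation cell decomposition and relative quantifier elimination in the language $\mathcal{L}_{qe}$ (Theorem~\ref{RQE}, with Remarks~\ref{Rem1} and~\ref{Rem2}), exactly as in the proof of Theorem~\ref{clo-th} in Section~8, one writes $A$ near $\mathbf{0}$ as a finite union of cells, so it suffices to treat a single cell $F(\xi')$ whose description involves finitely many valuative inequalities $v(a_i(x)) \lhd v((y_j - c_j(x))^{\nu_i}) \lhd v(b_i(x))$ and finitely many parametrized congruence conditions $v((y_j - c_j(x))^{\nu_i}) \diamond_{\theta} v(f_i(x))$ on the value-group side.

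Next I would handle the congruence conditions. The key observation, already used for $v(E)$ in Section~8 via equations~\eqref{vE} and~\eqref{affine}, is that the congruence conditions are not a genuine obstruction: by Remark~\ref{Rem2}, the relevant subgroups $\Gamma_{\theta}$ occurring essentially in the family-union description are not cofinal in $\Gamma$, so after passing to a suitable sub-semi-line in the value group (of the kind produced by Lemma~\ref{line}) the congruences become automatically satisfied on a cofinal set near $\mathbf{0}$. Concretely, I would use Lemma~\ref{line} and Theorem~\ref{limit-th} (existence of the limit, giving the affine line with rational slope $q\cdot l = p_i \cdot k + \beta_i$ for the pairs $(v(x), v(f_i(x)))$) to select, inside the domain $E \subset K$ of the cell parameter, an $\mathcal{L}$-definable subset $E_0$ with accumulation point $0$ along which all the congruence predicates hold and the valuations of $a_i, b_i, f_i$ are pinned down; this is the same mechanism that concluded the proof of Theorem~\ref{clo-th}.

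Once the congruences are disposed of, what remains over $E_0$ is a purely valuative (in fact semialgebraic-type) system of inequalities, so the cell $F(\xi')$ restricted over $E_0$ contains a valuative semialgebraic subset $A'$ still having $\mathbf{0}$ in the closure of $A' \setminus \{\mathbf{0}\}$. Applying Proposition~\ref{CSL} to $A'$ yields a semialgebraic map $\varphi: R \to K^n$ given by algebraic power series with $\varphi(0) = \mathbf{0}$ and $\varphi(R \setminus \{0\}) \subset A' \subset A \setminus \{\mathbf{0}\}$; taking $E := \varphi^{-1}(A)$, or more simply $E := R$ after shrinking, gives the $\mathcal{L}$-definable subset with accumulation point $0$ required in the statement. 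I would then note the map is built from the algebraic power series furnished by Proposition~\ref{A-M} and its corollary, so it is indeed definable in the language of valued fields.

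The main obstacle I expect is the bookkeeping in the reduction step: one has to verify carefully that, after selecting $E_0$ by Lemma~\ref{line}, the truncated cell genuinely contains a \emph{valuative semialgebraic} set (not merely an $\mathcal{L}$-definable one) to which Proposition~\ref{CSL} applies — i.e.\ that eliminating the congruence conditions really does leave only Boolean combinations of inequalities between valuations of regular functions, possibly after one further blow-up or monomialization as in Section~8. This is the point where the family-union form of Theorem~\ref{RQE} and the non-cofinality in Remark~\ref{Rem2} must be used with some care; the rest is a verbatim repetition of the rank one argument from~\cite[Proposition~8.2]{Now2}.
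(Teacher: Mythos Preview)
Your route is genuinely different from the paper's and contains a real gap at the step you yourself flag as the ``main obstacle.'' Pas preparation cell decomposition produces cells whose centers $c(x,\xi)$ and boundary data $a_i(x,\xi),b_i(x,\xi),f_i(x,\xi)$ are arbitrary $\mathcal{L}$-definable functions, not polynomials. Even after you strip away the parametrized congruence conditions via Lemma~\ref{line} and Theorem~\ref{limit-th}, the surviving description of the cell over your set $E_0$ still reads
\[
   v(a_i(x)) \lhd v\bigl((y-c(x))^{\nu_i}\bigr) \lhd v(b_i(x)),
\]
and there is no mechanism that turns these into Boolean combinations of inequalities between valuations of \emph{regular} functions. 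So the set you obtain is $\mathcal{L}$-definable but not valuative semialgebraic, and Proposition~\ref{CSL} does not apply to it. The remark ``possibly after one further blow-up or monomialization as in Section~8'' does not help: Section~8 performs no blow-ups, and blowing up the ambient space does nothing to the definable (non-algebraic) centers coming from cell decomposition.

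The paper proceeds quite differently and avoids cell decomposition altogether. It first uses elimination of $K$-quantifiers to write $A\setminus\{a\}$ via conditions $(v(f_1(x)),\ldots,v(f_r(x)))\in P$ and $(\overline{ac}\,g_1(x),\ldots,\overline{ac}\,g_s(x))\in Q$ with $f_i,g_j$ \emph{polynomials}. It then takes a finite composite of blow-ups $\sigma:Y\to K\mathbb{A}^n$ making all the $f_i^{\sigma},g_j^{\sigma}$ normal crossings, and uses Corollary~\ref{clo-th-cor-3} to lift the accumulation point to some $b\in Y(K)$ in the closure of $B:=\sigma^{-1}(A\setminus\{a\})$. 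In local coordinates at $b$ the description of $B$ becomes $(v(y_1),\ldots,v(y_n))\in\widetilde P$, $(\overline{ac}\,y_1,\ldots,\overline{ac}\,y_n)\in\widetilde Q$. An induction on $n$ handles the locus where some $\overline{ac}\,y_i=0$; on the complement one applies Lemma~\ref{line} to $\widetilde P$ to get a semi-line with direction $(r_1,\ldots,r_n)\in\mathbb{N}^n$, chooses $w_i\in K$ with the prescribed valuations and angular components, and takes the explicit monomial arc $\psi(t)=(w_1t^{r_1},\ldots,w_nt^{r_n})$. The definable set $E\subset R$ is cut out by the parametrized congruence conditions on $v(t)$ together with $\overline{ac}\,t=1$, and $\varphi:=\sigma\circ\psi$ is the desired curve. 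Thus the paper never reduces to Proposition~\ref{CSL}; it constructs the arc directly after monomializing the \emph{polynomial} data by resolution.
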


\begin{proof}
As before, we proceed with induction with respect to the dimension
of the ambient space $n$. The case $n=1$ being evident, suppose
$n>1$. By elimination of $K$-quantifiers, the set $A \setminus \{
a \}$ is a finite union of sets defined by conditions of the form
$$ (v(f_{1}(x)),\ldots,v(f_{r}(x))) \in P, \ \
   (\overline{ac}\, g_{1}(x),\ldots,\overline{ac}\, g_{s}(x)) \in Q,
$$
where $f_{i},g_{j} \in K[x]$ are polynomials, and $P$ and $Q$ are
definable subsets of $\Gamma^{r}$ and $\Bbbk^{s}$, respectively.
Without loss of generality, we may assume that $A$ is such a set
and $a=0$.

\vspace{1ex}

Take a finite composite
$$ \sigma: Y \longrightarrow K\mathbb{A}^{n} $$
of blow-ups along smooth centers such that the pull-backs
$$ f_{1}^{\sigma},\ldots, f_{r}^{\sigma} \ \ \ \text{and}
   \ \ \  g_{1}^{\sigma},\ldots, g_{s}^{\sigma} $$
are normal crossing divisors unless they vanish. Since the
restriction $\sigma: Y(K) \longrightarrow K^{n}$ is definably
closed (Corollary~\ref{clo-th-cor-3}), there is a point $b \in
Y(K) \cap \sigma^{-1}(a)$ which lies in the closure of the set
$$ B := Y(K) \cap \sigma^{-1}(A \setminus \{ a \}). $$
Take local coordinates $y_{1}.\ldots,y_{n}$ near $b$ in which
$b=0$ and every pull-back above is a normal crossing. We shall
first select a semialgebraic map $\psi : R \longrightarrow Y(K)$
given by restricted power series and an $\mathcal{L}$-definable
subset $E$ of $R$ with accumulation point $0$ such that
$$ \psi(0)=b \ \ \ {\text and} \ \ \  \psi( E
   \setminus \{ 0 \}) \subset B. $$

\vspace{1ex}

Since the valuation map and the angular component map composed
with a continuous function are locally constant near any point at
which this function does not vanish, the conditions which describe
the set $B$ near $b$ are of the form
$$ (v(y_{1}),\ldots,v(y_{n})) \in \widetilde{P}, \ \
   (\overline{ac}\, y_{1},\ldots,\overline{ac}\, y_{n}) \in
  \widetilde{Q}, $$
where $\widetilde{P}$ and $\widetilde{Q}$ are definable subsets of
$\Gamma^{n}$ and $\Bbbk^{n}$, respectively.

\vspace{1ex}

The set $B_{0}$ determined by the conditions
$$ (v(y_{1}),\ldots,v(y_{n})) \in \widetilde{P}, $$
$$ (\overline{ac}\, y_{1},\ldots,\overline{ac}\, y_{n}) \in \widetilde{Q} \, \cap
   \, \bigcup_{i=1}^{n} \, \{ \xi_{i}=0 \}, $$
is contained near $b$ in the union of hyperplanes $\{ y_{i}=0 \}$,
$i=1,\ldots,n$. If $b$ is an accumulation point of the set
$B_{0}$, then the desired map $\psi$ exists by the induction
hypothesis. Otherwise $b$ is an accumulation point of the set
$B_{1} := B \setminus B_{0}$.

\vspace{1ex}

Now we are going to apply relative quantifier elimination in the
value group sort $\Gamma$. Similarly, as in the proof of
Lemma~\ref{line}, the parametrized congruence conditions which
occur in the description of the definable subset $\widetilde{P}$
of $\Gamma^{n}$, achieved via quantifier elimination, are not an
essential obstacle to finding the desired map $\psi$, but affect
only the definable subset $E$ of $R$. Neither are the conditions
$$ \widetilde{Q} \, \setminus \, \bigcup_{i=1}^{n} \, \{ \xi_{i}=0 \}$$
imposed on the angular components of the coordinates
$y_{1},\ldots,y_{n}$, because none of them vanishes here.
Therefore, in order to select the map $\psi$, we must first of all
analyze the linear conditions (equalities and inequalities) which
occur in the description of the set $\widetilde{P}$.

\vspace{1ex}

The set $\widetilde{P}$ has an accumulation point
$(\infty,\ldots,\infty)$ as $b=0$ is an accumulation point of $B$.
By Lemma~\ref{line}, there is an affine semi-line
$$ L = \{ (r_{1}t + \gamma_{1},\ldots,r_{n}t + \gamma_{n}): \, t
   \in \Gamma, \ t \geq 0 \} \ \ \ \text{with} \ \ r_{1},\ldots,r_{n} \in \mathbb{N}, $$
passing through a point $\gamma = (\gamma_{1},\ldots,\gamma_{n})
\in P$ and such that $(\infty,\ldots,\infty)$ is an accumulation
point of the intersection $P \cap L$ too.

\vspace{1ex}

Now, take some elements
$$ (\xi_{1},\ldots,\xi_{n}) \in \widetilde{Q} \, \setminus
   \, \bigcup_{i=1}^{n} \, \{ \xi_{i}=0 \} $$
and next some elements $w_{1},\dots,w_{n} \in K$ for which
$$ v(w_{1})=\gamma_{1},\ldots,v(w_{n})=\gamma_{n} \ \ \ \text{and}
   \ \ \ \overline{ac}\, w_{1} = \xi_{1}, \ldots, \overline{ac}\, w_{n}
   = \xi_{n}. $$
It is not difficult to check that there exists an
$\mathcal{L}$-definable subset $E$ of $R$ which is determined by a
finite number of parametrized congruence conditions (in the
many-sorted language $\mathcal{L}_{qe}$ described in Section~7)
imposed on $v(t)$ and the conditions $\overline{ac}\, t =1$ such
that the subset
$$ F := \left\{ \left( w_{1} \cdot t^{r_{1}}, \ldots, w_{n} \cdot t^{r_{n}} \right):
   \; t \in E \right\} $$
of the arc
$$ \psi: R \to Y, \ \ \psi(t) = \left( w_{1} \cdot t^{r_{1}}, \ldots, w_{n} \cdot t^{r_{n}}
   \right) $$
is contained in $B_{1}$. Then $\varphi := \sigma \circ \psi$ is
the map we are looking for. This completes the proof.
\end{proof}

\section{The \L{}ojasiewicz inequalities}

In this section we provide certain two versions of the
\L{}ojasiewicz inequality which generalize the ones from
\cite[Propositions~9.1 and~9.2]{Now2} to the case of arbitrary
Henselian valued fields. Moreover, the first one is now formulated
for several functions $g_{1},\ldots,g_{m}$. For its proof we still
need the following easy consequence of the closedness theorem.

\begin{proposition}\label{bound}
Let $f: A \to K$ be a continuous $\mathcal{L}$-definable function
on a closed bounded subset $A \subset K^{n}$. Then $f$ is a
bounded function, i.e.\ there is an $\omega \in \Gamma$ such that
$v(f(x)) \geq \omega$ for all $x \in A$.
\end{proposition}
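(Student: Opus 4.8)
The plan is to derive Proposition~\ref{bound} directly from the closedness theorem (Theorem~\ref{clo-th}), using the standard trick of replacing an unbounded function by a bounded graph through inversion. First I would dispose of the trivial case where $A$ is empty. Assuming $A \neq \emptyset$, consider the $\mathcal{L}$-definable set
$$ B := \{ (x, y) \in A \times R : y \cdot f(x) = 1 \text{ or } (y = 0 \text{ and } f(x) = 0) \} \subset A \times R. $$
In other words, $B$ records the pair $(x, 1/f(x))$ when $f(x) \neq 0$ and $(x,0)$ when $f(x)=0$; its second coordinate always lies in $R$ precisely because this is how one encodes "the reciprocal, truncated at the valuation ring". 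I would check that $B$ is closed in $A \times R$: away from the zero set of $f$ this follows from continuity of $f$ and of the map $y \mapsto y f(x)$, and near points where $f$ vanishes one uses that $1/f$ blows up in absolute value, so no sequence in $B$ with first coordinate approaching such a point can converge inside $A \times R$ unless its second coordinate tends to $0$, which is exactly the point we have adjoined.

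Next I would apply Theorem~\ref{clo-th} (with $m=1$, $D = A$) to conclude that the projection $\pi(B) \subset A$ is closed; but in fact $\pi(B) = A$ since every $x \in A$ has a partner. The real payoff is a different projection: let $\rho : A \times R \to R$ be the second-coordinate projection and consider $\rho(B)$. By symmetry of the closedness theorem applied after swapping coordinates — more precisely, viewing $B$ as a closed definable subset of $R \times A$ and projecting to... — hmm, here one must be slightly careful, because the closedness theorem as stated projects off the $R^m$ factor, i.e. it gives closedness of $\pi(B) \subset A$, not of $\rho(B) \subset R$. So the cleaner route is: the set $Z := \{ x \in A : f(x) = 0 \}$ is closed in $A$ (continuity), and on $A \setminus Z$ the function $x \mapsto 1/f(x)$ is a continuous $\mathcal{L}$-definable function into $R$. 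I want to show $v(f)$ is bounded below on $A$, equivalently that $|1/f|$ is bounded on $A \setminus Z$, equivalently that $0$ is not in the closure of $\rho(B \cap ((A\setminus Z) \times R))$. Suppose it were; then I would pick, for each sufficiently small $\gamma$, a point $x_\gamma \in A$ with $v(f(x_\gamma)) < -\gamma$, and using closedness of $A$ together with the closedness theorem extract a limit point.

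The key step — and the main obstacle — is to organize this so that the closedness theorem applies in the direction it is actually stated, namely projecting away the compact-type factor $R^m$. The clean way: consider instead the closed definable set $B' := \overline{\{(x, 1/f(x)) : x \in A \setminus Z\}} \subset K^n \times R$ (closure taken in $K^n \times R$; note the second coordinate stays in the closed set $R$). Its projection to $K^n$ is closed by Theorem~\ref{clo-th}, hence compact-in-spirit arguments go through, but more to the point $B'$ is itself closed and bounded if $A$ is, and the function $(x,z) \mapsto z$ restricted to $B'$ together with $(x,z)\mapsto x$ shows that the closure can only add points $(x,z)$ with $x \in A$ (since $\pi(B')$ is closed and contained in $\overline{A\setminus Z}\subset A$) and then $z = 1/f(x)$ by continuity of $f$ — so in fact $B'$ adds nothing and $\{(x,1/f(x))\}$ was already closed, which forces $z = 1/f(x) \in R$ to be attained, i.e. $v(1/f(x)) \geq 0$... no, that only shows $1/f \in R$, giving $v(f) \leq 0$, the wrong inequality. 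I therefore expect the actual argument to run the other way: apply Theorem~\ref{clo-th} to the graph of $f$ itself viewed inside $A \times \mathbb{P}^1(K)$ (via Corollary~\ref{clo-th-cor-1}), note the graph is closed since $f$ is continuous, and observe that if $v(f)$ were not bounded below then the point $\infty \in \mathbb{P}^1(K)$ would lie in the closure of the image, producing via Corollary~\ref{clo-th-cor-1} a point $(x_0,\infty)$ in the graph with $x_0 \in A$, contradicting that $f(x_0) \in K$. Spelling out that $\infty$ is a limit point of $\{f(x) : x\in A\}$ in $\mathbb{P}^1(K)$ exactly when $\inf_{x\in A} v(f(x)) = -\infty$, and checking closedness of the graph in $A\times\mathbb{P}^1(K)$ (routine, from continuity), are the two points to verify; everything else is immediate from Corollary~\ref{clo-th-cor-1}.
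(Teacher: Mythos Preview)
Your final strategy---embedding the graph of $f$ in $A \times \mathbb{P}^{1}(K)$ and arguing that $\infty$ cannot lie in the closure of $f(A)$---is the right one, and is presumably what the paper intends (it gives no argument beyond calling this an easy consequence of the closedness theorem). But there is a genuine gap in how you carry it out.

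Corollary~\ref{clo-th-cor-1} projects \emph{away} the projective factor: it asserts that $D \times \mathbb{P}^{m}(K) \to D$ is closed for $D \subset K^{n}$. What you actually need is the opposite direction, namely that the projection of the graph $G \subset A \times \mathbb{P}^{1}(K)$ onto $\mathbb{P}^{1}(K)$ has closed image. That is not what Corollary~\ref{clo-th-cor-1} says, and without further input it is simply false: take $A = K$ and $f$ the identity, where the graph is closed in $K \times \mathbb{P}^{1}(K)$ but its second projection $K \subset \mathbb{P}^{1}(K)$ is not closed. The missing input is exactly the hypothesis you never invoke in your final paragraph: \emph{boundedness} of $A$. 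After rescaling one has $A$ closed in $R^{n}$, hence $G$ closed in $R^{n} \times \mathbb{P}^{1}(K)$; now the projection $R^{n} \times \mathbb{P}^{1}(K) \to \mathbb{P}^{1}(K)$ \emph{is} definably closed, since closedness is local on the target and on each of the two affine charts of $\mathbb{P}^{1}(K)$ this is just Theorem~\ref{clo-th} with $D \subset K$ and $m = n$. Then $f(A)$ is closed in $\mathbb{P}^{1}(K)$ and misses $\infty$, so $\infty$ is not an accumulation point of $f(A)$, which is precisely the boundedness of $f$.

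So the idea is sound and easily repaired, but as written you have applied the closedness theorem in the wrong direction and omitted the only place where boundedness of $A$ enters. You yourself flagged the directional issue earlier (``the closedness theorem as stated projects off the $R^{m}$ factor''), so it is worth making sure the final version actually respects it.
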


We adopt the following notation:
$$ v(x) = v(x_{1},\ldots,x_{n}) := \min \: \{ v(x_{1}), \dots,
   v(x_{n}) \}
$$
for $x = (x_{1},\ldots,x_{n}) \in K^{n}$.

\begin{theorem}\label{Loj1}
Let $f,g_{1},\ldots,g_{m}: A \to K$ be continuous
$\mathcal{L}$-definable functions on a closed (in the
$K$-topology) bounded subset $A$ of $K^{m}$. If
$$ \{ x \in A: g_{1}(x)= \ldots =g_{m}(x) =0 \} \subset \{ x \in A: f(x)=0 \}, $$
then there exist a positive integer $s$ and a constant $\beta \in
\Gamma$ such that
$$ s \cdot v(f(x)) + \beta \geq v((g_{1}(x), \ldots ,g_{m}(x))) $$
for all $x \in A$.
\end{theorem}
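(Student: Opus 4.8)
The statement is the classical \L{}ojasiewicz inequality in the valued-field setting, and I would derive it from the closedness theorem via a compactness-type argument, exactly as one does over $\mathbb{R}$ or $\mathbb{Q}_p$. The main idea is to consider the auxiliary function $g := (g_1,\ldots,g_m)$, measured by $v(g(x)) = \min_i v(g_i(x))$, and to study the ratio of $v(f)$ to $v(g)$ on the set where $g$ does not vanish. The hypothesis says $\{g=0\}\cap A \subset \{f=0\}\cap A$, so on $A$ the vanishing locus of $f$ contains that of $g$; the conclusion is a uniform linear bound $s\cdot v(f(x)) + \beta \ge v(g(x))$.

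First I would reduce to showing that the $\mathcal{L}$-definable set
$$ S := \{ (v(f(x)), v(g(x))) : x \in A,\ g(x)\neq 0 \} \subset (\Gamma\cup\{\infty\}) \times \Gamma $$
is, near the relevant part of its closure, bounded below by an affine line of the required shape. The key point is a \emph{boundedness} claim: there is no sequence (or, in the saturated setting, no point of a suitable definable family) along which $v(g(x))$ tends to $+\infty$ while $v(f(x))$ stays bounded. Suppose otherwise; then by the closedness theorem applied to the projection $A\times R \to A$ — or more precisely by combining Proposition~\ref{bound} with the curve selection of Proposition~\ref{GCSL} — one would produce a definable arc $\varphi: E \to A$ with $\varphi(0)$ an accumulation point $a\in A$ at which $g(a)=0$ but along which $v(f(\varphi(t)))$ remains bounded, forcing $f(a)\neq\infty$ in value, i.e.\ $f$ bounded near $a$, contradicting nothing yet; the actual contradiction comes from pushing further: since $a\in\{g=0\}\cap A\subset\{f=0\}\cap A$ we have $f(a)=0$, so $v(f(\varphi(t)))\to\infty$, and then the quotient $v(g)/v(f)$ along the arc is controlled by the one-variable limit theorem (Theorem~\ref{limit-th}), which forces the pair $(v(f),v(g))$ along the arc onto an affine line with rational slope — giving the bound $s\cdot v(f) + \beta \ge v(g)$ along every such arc, with $s$ bounded by the finitely many slopes that can occur.

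To globalize, I would argue that since $A$ is closed and bounded and everything in sight is $\mathcal{L}$-definable, the finitely many possible "bad" slopes and constants coming from all arcs through points of $\{g=0\}\cap A$ can be amalgamated: take $s$ to be a common multiple of the denominators of those slopes and $\beta$ large enough (using Proposition~\ref{bound} to bound $v(f)$ and $v(g)$ from below on all of $A$, which handles the region bounded away from $\{g=0\}$, where $v(g)$ is bounded above and the inequality is trivial for $\beta$ large). The standard clean way to phrase this without explicit arcs is: assume the conclusion fails for every $(s,\beta)$; by saturation pass to an elementary extension and pick $x$ with $v(g(x))$ "infinitely larger" than any $\mathbb{Z}$-linear bound in $v(f(x))$; apply curve selection at the standard part of $x$, which lies in $\{g=0\}\cap A\subset\{f=0\}\cap A$, and derive a contradiction with Theorem~\ref{limit-th}.

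The step I expect to be the main obstacle is the globalization, i.e.\ passing from the arc-by-arc linear bound to a single pair $(s,\beta)$ working uniformly on all of $A$. Over $\mathbb{R}$ this is handled by compactness of $A$; here the closedness theorem is the substitute, and the delicate point is that $\Gamma$ need not be archimedean, so "the slopes are bounded" must be argued definably — this is where I would again invoke relative quantifier elimination for ordered abelian groups (Theorem~\ref{RQE}) together with Remarks~\ref{Rem1} and~\ref{Rem2}, just as in Lemma~\ref{line} and the proof of the closedness theorem, to see that only finitely many rational slopes and finitely many congruence-type constraints can occur in the definable set $S$, and that the congruence constraints do not obstruct the desired affine lower bound. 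Once that is in place, choosing $s$ and $\beta$ is routine.
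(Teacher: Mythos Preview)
Your proposal reaches the right endpoint---applying Theorem~\ref{RQE} to the definable image $\{(v(f(x)),v(g(x)))\}\subset\Gamma^2$---but the route through curve selection, the limit theorem, and saturation is a detour that neither works cleanly nor contributes to the argument. The curve-selection step has a structural problem: selecting an arc at a point $a\in\{g=0\}\cap A$ produces \emph{some} arc in $A$, but nothing forces it to witness the failure of the bound; and in the saturation variant there is no ``standard part'' map available in this generality to pull the counterexample $x$ back into $A$. Once you invoke Theorem~\ref{RQE} on the full set $S$ (as you propose for the ``globalization''), the arcwise analysis becomes redundant anyway, since the quantifier-elimination step alone already yields the finitely many linear constraints and the integer $s$.

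The paper's proof is more direct and shows what the closedness theorem is really doing here. For each $\gamma\in\Gamma$ the level set $A_\gamma=\{x\in A:v(f(x))=\gamma\}$ is closed and bounded; by hypothesis $g$ does not vanish on it, and by Corollary~\ref{clo-th-cor-0} the image $g(A_\gamma)\subset K^m$ is closed, hence bounded away from $0$. This is your ``boundedness claim'', established in one line without arcs: if $v(g)$ were unbounded on $A_\gamma$ then $0$ would lie in the closed set $g(A_\gamma)$. Now the set $\Lambda=\{(v(f(x)),v(g(x))):x\in A,\ f(x)\neq0\}\subset\Gamma^2$ is definable with every vertical fiber bounded above, and Theorem~\ref{RQE} together with Remarks~\ref{Rem1}--\ref{Rem2}, applied directly to $\Lambda$, gives $\Lambda\cap\{\gamma>\gamma_0\}\subset\{\delta\le s\gamma\}$ for some positive integer $s$ and some $\gamma_0\in\Gamma$. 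The remaining region $\{v(f(x))\le\gamma_0\}$ is handled by one more appeal to the closedness theorem and Proposition~\ref{bound}. So the whole proof is: closedness theorem for the fiber bound, then RQE on $\Lambda$; curve selection and Theorem~\ref{limit-th} can be dropped entirely.
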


\begin{proof}
Put $g = (g_{1},\ldots,g_{m})$. It is easy to check that the set
$$ A_{\gamma} := \{ x \in A: \; v(f(x))=\gamma \} $$
is a closed $\mathcal{L}$-definable subset of $A$ for every
$\gamma \in \Gamma$. By the hypothesis and the closedness theorem,
the set $g(A_{\gamma})$ is a closed $\mathcal{L}$-definable subset
of $K^{m} \setminus \{ 0 \}$, $\gamma \in \Gamma$. The set
$v(g(A_{\gamma}))$ is thus bounded from above, i.e.\
$$ v(g(A_{\gamma})) \leq \alpha (\gamma) $$
for some $\alpha(\gamma) \in \Gamma$. By elimination of
$K$-quantifiers, the set
$$ \Lambda := \{ (v(f(x)),v(g(x))) \in \Gamma^{2}: \; x \in A, \ f(x) \neq 0
   \} $$
$$ \subset \{ (\gamma,\delta) \in \Gamma^{2}: \; \delta \leq
\alpha(\gamma) \} $$ is a definable subset of $\Gamma^{2}$ in the
many-sorted language $\mathcal{L}_{qe}$ from Section~7. Applying
Theorem~\ref{RQE} ff., we see that this set is described by a
finite number of parametrized linear equalities and inequalities,
and of parametrized congruence conditions. Hence
$$ \Lambda \, \cap  \{ (\gamma,\delta) \in \Gamma^{2}: \; \gamma > \gamma_{0} \}
   \subset \{ (\gamma,\delta) \in \Gamma^{2}: \; \delta \leq s \cdot \gamma \} $$
for a positive integer $s$ and some $\gamma_{0} \in \Gamma$. We
thus get
$$ v(g(x)) \leq s \cdot v(f(x)) \ \ \text{if} \ \ x \in A, \, v(f(x))> \gamma_{0}. $$
Again, by the hypothesis, we have
$$ g(\{ x \in A:  \ v(f(x)) \leq \gamma_{0} \}) \subset K^{m} \setminus \{ 0
   \}. $$
Therefore it follows from the closedness theorem that the set
$$ \{ v(g(x)) \in \Gamma: \ v(f(x)) \leq \gamma_{0} \} $$
is bounded from above, say, by a $\theta \in \Gamma$. Taking an
$\omega \in \Gamma$ as in Proposition~\ref{bound} and putting
$\beta := \max \, \{ 0, \, \theta -s \cdot \omega \}$, we get
$$ s \cdot v(f(x)) - v(g(x)) + \beta \geq 0, \ \ \text{for all} \ \ x \in A, $$
as desired.
\end{proof}

A direct consequence of Theorem~\ref{Loj1} is the following result
on H\"{o}lder continuity of definable functions.

\begin{proposition}\label{Hol}
Let $f: A \to K$ be a continuous $\mathcal{L}$-definable function
on a closed bounded subset $A \subset K^{n}$. Then $f$ is
H\"{o}lder continuous with a positive integer $s$ and a constant
$\beta \in \Gamma$, i.e.\
$$ s \cdot v(f(x) - f(z)) + \beta \geq  v(x-z) $$
for all $x,z \in A$.
\end{proposition}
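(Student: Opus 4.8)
The plan is to deduce Proposition~\ref{Hol} (Hölder continuity) directly from Theorem~\ref{Loj1} (the first Łojasiewicz inequality) by a standard trick: apply the inequality to suitable functions on the product $A \times A$, with the coordinate projections playing the role of the $g_i$'s and $f(x)-f(z)$ playing the role of $f$. The key point will be checking that the hypotheses of Theorem~\ref{Loj1} transfer correctly to this product situation, after which the conclusion follows almost verbatim.

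\begin{proof}
Consider the closed bounded $\mathcal{L}$-definable subset $A \times A$ of $K^{2n}$; it is closed and bounded since $A$ is. On $A \times A$ define the continuous $\mathcal{L}$-definable functions
$$ F(x,z) := f(x) - f(z), \qquad G_{i}(x,z) := x_{i} - z_{i}, \quad i=1,\ldots,n. $$
These are continuous because $f$ is continuous and subtraction is continuous in the $K$-topology, and they are $\mathcal{L}$-definable because $f$ is. Now observe that
$$ \{ (x,z) \in A \times A: G_{1}(x,z) = \ldots = G_{n}(x,z) = 0 \} = \{ (x,z) \in A \times A: x = z \} \subset \{ (x,z) \in A \times A: F(x,z) = 0 \}, $$
the inclusion being trivial since $x = z$ forces $f(x) = f(z)$. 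Thus the hypothesis of Theorem~\ref{Loj1}, applied with ambient space $K^{2n}$, the set $A \times A$, the function $F$ in place of $f$ and the functions $G_{1},\ldots,G_{n}$ in place of $g_{1},\ldots,g_{m}$, is satisfied. Hence there exist a positive integer $s$ and a constant $\beta \in \Gamma$ such that
$$ s \cdot v(F(x,z)) + \beta \geq v((G_{1}(x,z),\ldots,G_{n}(x,z))) $$
for all $(x,z) \in A \times A$. By the definitions of $F$ and the $G_{i}$ and the convention $v(x-z) = \min_{i} v(x_{i}-z_{i})$, this reads
$$ s \cdot v(f(x) - f(z)) + \beta \geq v(x - z) $$
for all $x,z \in A$, which is exactly the asserted Hölder estimate.
\end{proof}

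The only genuine obstacle is the bookkeeping: one must verify that $A \times A$ is again closed, bounded and $\mathcal{L}$-definable (immediate, as these properties are preserved under finite products), and that $F$, $G_{i}$ are continuous and definable on it (immediate from the definability and continuity of $f$ and of the ring operations). Everything else is a direct invocation of Theorem~\ref{Loj1}, with no new analytic input required; in particular the closedness theorem is used only through its already-established consequence Theorem~\ref{Loj1}.
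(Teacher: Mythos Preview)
Your proof is correct and follows exactly the same approach as the paper: the paper's own proof consists of the single sentence ``Apply Theorem~\ref{Loj1} to the functions $f(x)-f(y)$ and $g_{i}(x,y)=x_{i}-y_{i}$, $i=1,\ldots,n$,'' and you have simply written this out in full detail, correctly verifying that $A\times A$ is closed, bounded and $\mathcal{L}$-definable and that the zero-set inclusion required by Theorem~\ref{Loj1} holds.
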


\begin{proof}
Apply Theorem~\ref{Loj1} to the functions
$$ f(x) - f(y) \ \ \text{and} \ \ g_{i}(x,y)=x_{i} - y_{i}, \
   i=1,\ldots,n. $$
\end{proof}

We immediately obtain

\begin{corollary}
Every continuous $\mathcal{L}$-definable function $f: A \to K$ on
a closed bounded subset $A \subset K^{n}$ is uniformly continuous.
\end{corollary}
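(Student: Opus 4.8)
The statement is an immediate formal consequence of Proposition~\ref{Hol}, so the proof will be very short. Recall that a function $f\colon A\to K$ is uniformly continuous (in the $K$-topology) if for every $\gamma\in\Gamma$ there is a $\delta\in\Gamma$ such that, for all $x,z\in A$, the inequality $v(x-z)\geq\delta$ implies $v(f(x)-f(z))\geq\gamma$. The whole point is that the $\delta$ furnished by Hölder continuity depends only on $\gamma$ and on the absolute constants $s,\beta$, not on the points $x,z$.

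\textbf{Key step.} By Proposition~\ref{Hol}, applied to $f$ on the closed bounded set $A$, there are a positive integer $s$ and a constant $\beta\in\Gamma$ with
$$ s\cdot v(f(x)-f(z))+\beta\ \geq\ v(x-z)\qquad\text{for all }x,z\in A. $$
Given any target precision $\gamma\in\Gamma$, set $\delta:=s\gamma+\beta\in\Gamma$ (this is a legitimate element of $\Gamma$, since $s$ is a positive integer and $\beta\in\Gamma$). If $x,z\in A$ satisfy $v(x-z)\geq\delta$, then the displayed inequality yields $s\cdot v(f(x)-f(z))\geq v(x-z)-\beta\geq s\gamma$, hence $v(f(x)-f(z))\geq\gamma$. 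As $\gamma$ was arbitrary, $f$ is uniformly continuous.

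\textbf{Main obstacle.} There is essentially none: once Hölder continuity (Proposition~\ref{Hol}), and through it Theorem~\ref{Loj1} and the closedness theorem, are in hand, the deduction is purely formal manipulation in the ordered abelian group $\Gamma$. The only minor point worth checking is that $s\gamma+\beta$ indeed lies in $\Gamma$ and that the argument does not secretly require $\Gamma$ to be archimedean or divisible — it does not, since $s$ acts by iterated addition and $\beta$ is a fixed group element.
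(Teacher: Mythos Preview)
Your proof is correct and follows exactly the approach the paper intends: the corollary is stated with ``We immediately obtain'' and no proof, and the deduction you spell out from Proposition~\ref{Hol} is the intended one. The only step worth noting, which you handle correctly, is that from $s\cdot v(f(x)-f(z))\geq s\gamma$ one may conclude $v(f(x)-f(z))\geq\gamma$ because multiplication by a positive integer is order-preserving and injective in any ordered abelian group.
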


Now we state a version of the \L{}ojasiewicz inequality for
continuous definable functions of a locally closed subset of
$K^{n}$.

\begin{theorem}\label{Loj2}
Let $f,g: A \to K$ be two continuous $\mathcal{L}$-definable
functions on a locally closed subset $A$ of $K^{n}$. If
$$ \{ x \in A: g(x)=0 \} \subset \{ x \in A: f(x)=0 \}, $$
then there exist a positive integer $s$ and a continuous
$\mathcal{L}$-definable function $h$ on $A$ such that $f^{s}(x) =
h(x) \cdot g(x)$ for all $x \in A$.
\end{theorem}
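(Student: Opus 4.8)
The plan is to write down the unique candidate for $h$ and then use a \L{}ojasiewicz inequality in the value group to choose the exponent $s$ making it continuous. Fix a positive integer $s$ (to be pinned down at the end) and put
$$ h(x) := f(x)^{s}/g(x) \ \text{ if } \ g(x) \neq 0, \qquad h(x) := 0 \ \text{ if } \ g(x) = 0. $$
This $h$ is $\mathcal{L}$-definable and satisfies $f(x)^{s} = h(x) g(x)$ identically on $A$, since on $\{g = 0\}$ both sides vanish (there $f=0$ by hypothesis). Hence the whole content of the theorem is the continuity of this $h$ for a suitable $s$. Now $h$ is automatically continuous on the relatively open set $A' := \{x \in A : g(x) \neq 0\}$, where it equals $f^{s}/g$. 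It is also continuous on $U_{0} := A \setminus \overline{A'}$ (closure taken in $K^{n}$): this set is relatively open in $A$, it is contained in $\{g = 0\} \subset \{f = 0\}$, and each of its points has a neighbourhood in $A$ on which $g$, hence $f$, hence $h$, vanishes identically. Writing $A = A' \sqcup U_{0} \sqcup C$ with $C := \{x \in A : g(x) = 0\} \cap \overline{A'}$, which is relatively closed in $A$ and on which $h \equiv 0$, we are reduced to showing that, for a suitable $s$,
$$ s\, v(f(x)) - v(g(x)) = v\!\left( f(x)^{s}/g(x) \right) \longrightarrow \infty \quad \text{as } A' \ni x \to z $$
for every $z \in C$; the approach of $z$ along points with $g = 0$ is harmless, since $h$ vanishes there and $h(z)=0$.

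The local version of this is immediate from the \L{}ojasiewicz inequality already proved. Since $A$ is locally closed, every $z \in A$ has a clopen bounded ball $W \ni z$ in $K^{n}$ for which $A \cap W$ is a closed, bounded, $\mathcal{L}$-definable set. Applying Theorem~\ref{Loj1} to $f$ and $g$ restricted to $A \cap W$ — legitimate since $\{g = 0\} \cap A \cap W \subset \{f = 0\} \cap A \cap W$ — yields $s_{z} \in \mathbb{N}$ and $\beta_{z} \in \Gamma$ with $v(g(x)) \le s_{z} v(f(x)) + \beta_{z}$ on $A \cap W$. Then for every $s > s_{z}$ and $x \in A' \cap W$ one has $s\, v(f(x)) - v(g(x)) \ge (s - s_{z}) v(f(x)) - \beta_{z}$, which tends to $\infty$ as $x \to z$, because $f$ is continuous with $f(z) = 0$ and hence $v(f(x)) \to \infty$. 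Thus $h$ is continuous near $z$ as soon as $s > s_{z}$.

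The main obstacle is the passage from these pointwise exponents $s_{z}$ to a single exponent $s$ valid for all $z \in C$ simultaneously — i.e.\ the genuinely global nature of the statement over the locally closed, possibly unbounded set $A$. I would handle this in the spirit of Lemma~\ref{line}, fiber shrinking and curve selection, by descending to the value group: after elimination of $K$-quantifiers the set
$$ \Pi := \left\{ \left( v(x-z),\, v(f(x)),\, v(g(x)) \right) \in \Gamma^{3} : z \in C,\ x \in A' \right\} $$
is a definable subset of $\Gamma^{3}$, and the required statement — that for some fixed $s$ the function $(\gamma_{0},\gamma_{1},\gamma_{2}) \mapsto s\gamma_{1} - \gamma_{2}$ tends to $\infty$ along $\Pi$ as $\gamma_{0} \to \infty$ — becomes, via Theorem~\ref{RQE} together with Remarks~\ref{Rem1} and~\ref{Rem2} (the parametrized congruences being an inessential obstruction, exactly as in those earlier arguments), a problem of semi-linear geometry in $\Gamma^{3}$. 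It reduces to finitely many ``directions'' of $\Pi$ along which $\gamma_{0} \to \infty$; along each of these $\gamma_{1} = v(f(x)) \to \infty$ by continuity of $f$ on $C \subset \{f = 0\}$, and the delicate point — where the base point $z \in C$ may itself vary along the direction — is to use that $C$ is relatively closed in $A$ together with the local \L{}ojasiewicz estimate of the preceding paragraph to bound the corresponding slope of $\gamma_{2} = v(g(x))$ by a fixed multiple of the (positive) slope of $\gamma_{1}$. Each such direction then becomes favourable once $s$ exceeds a fixed rational ratio, and taking $s$ larger than all these finitely many ratios — which in particular bounds the $s_{z}$ — makes $h$ continuous on all of $A$; directions along which $f$ or $g$ vanishes identically are again harmless, since there $h = 0$. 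This will complete the proof.
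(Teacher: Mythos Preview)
Your local argument via Theorem~\ref{Loj1} is fine, and your reduction to showing that $s\,v(f(x)) - v(g(x)) \to \infty$ as $A' \ni x \to z$ for each \emph{fixed} $z \in C$ is correct. The gap is in the globalization step. When you pass to the definable set
$$ \Pi = \{\,(v(x-z),\,v(f(x)),\,v(g(x))) : z \in C,\ x \in A'\,\} \subset \Gamma^{3}, $$
you silently replace the pointwise statement above by the uniform statement ``$s\gamma_{1}-\gamma_{2}\to\infty$ along $\Pi$ as $\gamma_{0}\to\infty$''. That uniform statement is strictly stronger, and it is in general false for every $s$, so the semi-linear analysis you sketch cannot succeed.

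Here is a concrete obstruction. Take $A = K^{2}$, $f(x_{1},x_{2}) = x_{1}x_{2}$, $g(x_{1},x_{2}) = x_{2}$; then $\{g=0\} = \{x_{2}=0\} \subset \{f=0\}$ and $C = \{x_{2}=0\}$. For any $s \ge 2$ the function $h = f^{s}/g = x_{1}^{s}x_{2}^{s-1}$ extends continuously by zero across $C$, so the theorem holds. Yet with $z=(z_{1},0)$, $x=(z_{1},\epsilon)$, $v(z_{1})=-t$, $v(\epsilon)=t$ one gets $(\gamma_{0},\gamma_{1},\gamma_{2}) = (t,0,t) \in \Pi$, whence $s\gamma_{1}-\gamma_{2} = -t \to -\infty$ for every $s$. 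Along this direction $\gamma_{1} = v(f(x))$ does \emph{not} tend to $\infty$, precisely because $z$ runs off to infinity and $f$ is not uniformly continuous on the unbounded set $A$. The sentence in which you flag this as ``the delicate point'' does not resolve it; the proposed bound on the slope of $\gamma_{2}$ in terms of that of $\gamma_{1}$ simply does not exist here.

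The paper avoids this by choosing a different third coordinate in $\Gamma$. Writing $A = U \cap F$ with $U$ open, it exhausts $A$ by closed bounded $\mathcal{L}$-definable sets $X_{\beta}$ indexed by $\beta \in \Gamma$ (essentially $\{x : v(x)\ge -\beta,\ \mathrm{dist}(x,K^{n}\setminus U)\ge\beta\}$), and works with
$$ \Lambda = \{\,(\beta,\,v(f(x)),\,v(g(x))) : x \in X_{\beta},\ f(x)\neq 0\,\} \subset \Gamma^{3}. $$
For each fixed $\beta$ the closedness theorem applied on the closed bounded set $X_{\beta}$ bounds $v(g(x))$ from above on every level set $\{v(f(x))=\gamma\}$; relative quantifier elimination for $\Lambda$ then produces a single $s$ and thresholds $\gamma_{0}(\beta)$ with $v(g(x)) < s\,v(f(x))$ whenever $x \in X_{\beta}$ and $v(f(x))>\gamma_{0}(\beta)$. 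That is exactly continuity of the zero extension of $f^{s}/g$ on each $X_{\beta}$, hence on $A$. The key difference from your $\Pi$ is that the parameter $\beta$ records where $x$ sits inside $A$, not its distance to a freely varying point of $C$, so the pathology above cannot occur.
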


\begin{proof}
It is easy to check that the set $A$ is of the form $A := U \cap
F$, where $U$ and $F$ are two $\mathcal{L}$-definable subsets of
$K^{n}$, $U$ is open and $F$ is closed in the $K$-topology.

\vspace{1ex}

We shall adapt the foregoing arguments. Since the set $U$ is open,
its complement $V:= K^{n} \setminus U$ is closed in $K^{n}$ and
$A$ is the following union of open and closed subsets of $K^{n}$
and of $\mathbb{P}^{n}(K)$:
$$ X_{\beta} := \{ x \in K^{n}: \; v(x_{1}),\ldots,v(x_{n}) \geq
   -\beta, $$
$$  v(x-y) \leq \beta \ \ \ \text{for all} \ \ y \in V \}, $$
where $\beta \in \Gamma$, $\beta \geq 0$. As before, we see that
the sets
$$ A_{\beta,\gamma} := \{ x \in X_{\beta}: \; v(f(x))=\gamma \} \ \
   \text{with} \ \beta, \gamma \in \Gamma $$
are closed $\mathcal{L}$-definable subsets of $\mathbb{P}^{n}(K)$,
and next that the sets $g(A_{\beta,\gamma})$ are closed
$\mathcal{L}$-definable subsets of $K \setminus \{ 0 \}$ for all
$\beta,\gamma \in \Gamma$. Likewise, we get
$$ \Lambda := \{ (\beta,v(f(x)),v(g(x))) \in \Gamma^{3}: \; x \in X_{\beta}, \ f(x) \neq 0 \}
   \subset $$
$$ \subset \{ (\beta,\gamma,\delta) \in \Gamma^{3}: \; \delta <
   \alpha(\beta,\gamma) \} $$
for some $\alpha(\beta,\gamma) \in \Gamma$.

\vspace{1ex}

$\Lambda$ is a definable subset of $\Gamma^{3}$ in the many-sorted
language $\mathcal{L}_{qe}$, and thus is described by a finite
number of parametrized linear equalities and inequalities, and of
parametrized congruence conditions. Again, the above inclusion
reduces to an analysis of those linear equalities and
inequalities. Consequently, there exist a positive integer $s \in
\mathbb{N}$ and elements $\gamma_{0}(\beta) \in \Gamma$ such that
$$ \Lambda \cap \{ (\beta,\gamma,\delta) \in \Gamma^{3}: \;
   \gamma > \gamma_{0}(\beta) \} \subset \{ (\beta,\gamma,\delta) \in \Gamma^{3}:
   \; \delta < s \cdot \gamma \}. $$
Since $A$ is the union of the sets $X_{\beta}$, it is not
difficult to check that the quotient $f^{s}/g$ extends by zero
through the zero set of the denominator to a (unique) continuous
$\mathcal{L}$-definable function on $A$, which is the desired
result.
\end{proof}

We conclude this section with a theorem which is much stronger
than its counterpart, \cite[Proposition~12.1]{Now2}, concerning
continuous rational functions. The proof we give now resembles the
above one, without applying transformation to a normal crossing.
Put
$$ \mathcal{D}(f) := \{ x \in A: f(x)
   \neq 0 \} \ \ \text{and} \ \ \mathcal{Z}\, (f) := \{ x \in A: f(x) = 0 \}. $$

\begin{theorem}\label{Loj3}
Let $f: A \to K$ be a continuous $\mathcal{L}$-definable function
on a locally closed subset $A$ of $K^{n}$ and $g: \mathcal{D}(f)
\to K$ a continuous $\mathcal{L}$-definable function. Then $f^{s}
\cdot g$ extends, for $s \gg 0$, by zero through the set
$\mathcal{Z}\, (f)$ to a (unique) continuous
$\mathcal{L}$-definable function on $A$.
\end{theorem}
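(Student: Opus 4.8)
The plan is to mimic the proof of Theorem~\ref{Loj2}, replacing the global \L{}ojasiewicz estimate for the denominator by a direct argument on the level sets of $f$. First I would write $A = U \cap F$ with $U$ open and $F$ closed in the $K$-topology, and exhaust $A$ by the open-and-closed pieces
$$ X_{\beta} := \{ x \in K^{n}: v(x_{1}), \ldots, v(x_{n}) \geq -\beta, \ v(x-y) \leq \beta \ \text{for all} \ y \in V \}, $$
where $V := K^{n} \setminus U$ and $\beta \in \Gamma$, $\beta \geq 0$. Each $X_{\beta} \cap F$ is a closed bounded $\mathcal{L}$-definable subset of $K^{n}$ (equivalently, a closed subset of $\mathbb{P}^{n}(K)$), and $A = \bigcup_{\beta} (X_{\beta} \cap F)$. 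It suffices to find, for each $\beta$, an integer $s(\beta)$ such that $f^{s(\beta)} \cdot g$ extends continuously by zero through $\mathcal{Z}(f) \cap X_{\beta}$; a compactness/definability argument in $\Gamma$ (exactly as in Theorem~\ref{Loj2}) then produces a single $s$ working for all $\beta$, and the extensions glue to a continuous $\mathcal{L}$-definable function on $A$ because they agree on overlaps and $\mathcal{D}(f)$ is dense in $A$ near $\mathcal{Z}(f)$ is not needed — rather, the extension is forced to be zero on $\mathcal{Z}(f)$ and continuous there.

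Next I would set up the key estimate on level sets. Fix $\beta$ and write $Y := X_{\beta} \cap F$, a closed bounded definable set. For $\gamma \in \Gamma$ put $Y_{\gamma} := \{ x \in Y : v(f(x)) = \gamma \}$, a closed definable subset of $Y$; since $g$ is continuous on $\mathcal{D}(f) \supset Y_{\gamma}$ and $Y_{\gamma}$ is closed and bounded, Proposition~\ref{bound} gives $v(g(x)) \geq \alpha(\gamma)$ for $x \in Y_{\gamma}$, for some $\alpha(\gamma) \in \Gamma$. Thus
$$ \Lambda := \{ (\beta, v(f(x)), v(g(x))) \in \Gamma^{3} : x \in Y, \ f(x) \neq 0 \} \subset \{ (\beta, \gamma, \delta) : \delta \geq \alpha(\beta, \gamma) \}. $$
By elimination of $K$-quantifiers, $\Lambda$ is a definable subset of $\Gamma^{3}$ in the language $\mathcal{L}_{qe}$, hence — applying Theorem~\ref{RQE} and Remarks~\ref{Rem1},~\ref{Rem2} — is cut out near $\gamma = +\infty$ by finitely many parametrized linear (in)equalities and congruence conditions; the congruences are harmless, and the linear part yields a positive integer $s$ and $\gamma_{0}(\beta) \in \Gamma$ with
$$ v(g(x)) \geq -s \cdot v(f(x)) \ \ \text{whenever} \ \ x \in Y, \ v(f(x)) > \gamma_{0}(\beta). $$
Hence $v(f^{s}(x) g(x)) = s\,v(f(x)) + v(g(x)) \geq 0$ on that region, so $f^{s} g$ is bounded near $\mathcal{Z}(f) \cap Y$; moreover, since $v(f(x)) \to \infty$ as $x \to \mathcal{Z}(f)$, raising the exponent by one gives $v(f^{s+1}(x) g(x)) \geq v(f(x)) \to \infty$, which forces $f^{s+1} g \to 0$ at every point of $\mathcal{Z}(f) \cap Y$. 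Therefore $f^{s+1} g$ extends by zero to a continuous $\mathcal{L}$-definable function on $Y$ (continuity on the open part $\mathcal{D}(f) \cap Y$ is clear, and at points of $\mathcal{Z}(f)$ the limit is $0$ by the displayed estimate).

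Finally I would handle the piece $\{ x \in Y : v(f(x)) \leq \gamma_{0}(\beta) \}$: on its intersection with $\mathcal{D}(f)$ the function $g$ is continuous definable on a closed bounded set on which $f$ is bounded away from $0$, so $f^{s+1} g$ is already bounded there (Proposition~\ref{bound} applied after noting $g$ extends continuously to the closure of this set inside $\mathcal{D}(f)$, which is itself closed bounded), and no extension issue arises since this set is disjoint from $\mathcal{Z}(f)$. Combining the two regions gives the continuous extension over each $Y = X_{\beta} \cap F$. The uniformity in $\beta$ — extracting one $s$ for all $\beta$ simultaneously — is obtained exactly as in the proof of Theorem~\ref{Loj2} by treating $\beta$ as an extra free $\Gamma$-variable in the definable set $\Lambda \subset \Gamma^{3}$ and again invoking Theorem~\ref{RQE}. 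The resulting local extensions agree on overlaps $X_{\beta} \cap X_{\beta'} \cap F$ (they coincide with $f^{s} g$ on the dense open set $\mathcal{D}(f)$ and with $0$ on $\mathcal{Z}(f)$), so they patch to a single continuous $\mathcal{L}$-definable function on $A$, and uniqueness is automatic by density of $\mathcal{D}(f)$ — or rather by the fact that any continuous extension must vanish on $\mathcal{Z}(f)$ and agree with $f^{s} g$ off it. The main obstacle is the reduction from the level-set bounds $\alpha(\gamma)$ to a genuinely \emph{linear} bound $\delta \geq -s\gamma$: this is precisely where relative quantifier elimination for ordered abelian groups is essential, since a priori $\alpha(\gamma)$ could be wild, and one must argue — as in Lemma~\ref{line} and Theorems~\ref{Loj1},~\ref{Loj2} — that the parametrized congruence conditions appearing in the $\mathcal{L}_{qe}$-description of $\Lambda$ do not obstruct the existence of a linear majorant valid near infinity.
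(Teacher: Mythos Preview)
Your proposal is correct and follows essentially the same route as the paper's proof: exhaust $A$ by the closed bounded pieces $X_{\beta}\cap F$, bound $v(g)$ on the level sets of $f$ via Proposition~\ref{bound}, encode these bounds in the definable set $\Lambda\subset\Gamma^{3}$, and then invoke relative quantifier elimination (Theorem~\ref{RQE}) to extract a linear lower bound $v(g(x))>r\cdot v(f(x))$ for $v(f(x))>\gamma_{0}(\beta)$, whence $f^{s}g$ with $s+r>0$ extends by zero. The only cosmetic difference is that the paper treats $\beta$ as a free $\Gamma$-variable from the outset rather than first fixing $\beta$ and then arguing uniformity, and it allows the slope $r\in\mathbb{Z}$ rather than writing it as $-s$ with $s>0$; your final paragraph already notes this and handles it correctly.
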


\begin{proof}
As in the proof of Theorem~\ref{Loj2}, let $A = U \cap F$ and
consider the same sets $X_{\beta} \subset K^{n}$, $\beta \in
\Gamma$, and $\Lambda \subset \Gamma^{3}$. Under the assumptions,
we get
$$ \Lambda \subset \{ (\beta,\gamma,\delta) \in \Gamma^{3}: \; \delta
   > \alpha(\beta,\gamma) \} $$
for some $\alpha(\beta,\gamma) \in \Gamma$. Now, in a similar
fashion as before, we can find an integer $r \in \mathbb{Z}$ and
elements $\gamma_{0}(\beta) \in \Gamma$ such that
$$ \Lambda \cap \{ (\beta,\gamma,\delta) \in \Gamma^{3}: \;
   \gamma > \gamma_{0}(\beta) \} \subset \{ (\beta,\gamma,\delta) \in \Gamma^{3}:
   \; \delta > r \cdot \gamma \}. $$
Take a positive integer $s \in \mathbb{N}$ such that $s + r >0$.
Then, as in the proof of Theorem~\ref{Loj2}, it is not difficult
to check that the function $f^{s} \cdot g$ extends by zero through
the zero set of $f$ to a (unique) continuous
$\mathcal{L}$-definable function on $A$, which is the desired
result.
\end{proof}

\begin{remark}
Note that Theorem~\ref{Loj3} is, in fact, a strengthening of
Theorem~\ref{Loj2}, and has many important applications. In
particular, it plays a crucial role in the proof of the
Nullstellensatz for regulous (i.e.\ continuous and rational)
functions on $K^{n}$.
\end{remark}


\section{Continuous hereditarily rational functions and regulous functions and sheaves}

Continuous rational functions on singular real algebraic
varieties, unlike those on non-singular real algebraic varieties,
often behave quite unusually. This is illustrated by many examples
from the paper~\cite[Section~1]{K-N}, and gives rise to the
concept of hereditarily rational functions. We shall assume that
the ground field $K$ is not algebraically closed. Otherwise, the
notion of a continuous rational function on a normal variety
coincides with that of a regular function and, in general, the
study of continuous rational functions leads to the concept of
{\it seminormality} and {\it seminormalization};
cf.~\cite{A-B,A-N} or \cite[Section~10.2]{Kol-1} for a recent
treatment. Let $K$ be topological field with the density property.
For a $K$-variety $Z$, let $Z(K)$ denote the set of all $K$-points
on $Z$. We say that a continuous function $f: Z(K) \longrightarrow
K$ is {\it hereditarily rational} if for every irreducible
subvariety $Y \subset Z$ there exists a Zariski dense open
subvariety $Y^{0} \subset Y$ such that $f|_{Y^{0}(K)}$ is regular.
Below we recall an extension theorem, which plays a crucial role
in the theory of continuous rational functions. It says roughly
that continuous rational extendability to the non-singular ambient
space is ensured by (and in fact equivalent to) the intrinsic
property to be continuous hereditarily rational. This theorem was
first proven for real and $p$-adic varieties in~\cite{K-N}, and
next over Henselian rank one valued fields
in~\cite[Section~10]{Now2}. The proof of the latter result relied
on the closedness theorem (Theorem~\ref{clo-th}), the descent
property (Corollary~\ref{clo-th-cor-4}) and the \L{}ojasiewicz
inequality (Theorem~\ref{Loj2}), and can now be repeated verbatim
for the case where $K$ is an arbitrary Henselian valued field $K$
of equicharacteristic zero.

\begin{theorem}\label{ext-th}
Let $X$ be a non-singular $K$-variety and $W \subset Z\subset X$
closed subvarieties. Let $f$ be a continuous hereditarily rational
function on $Z(K)$ that is regular at all $K$-points of $Z(K)
\setminus W(K)$. Then $f$ extends to a continuous hereditarily
rational function $F$ on $X(K)$ that is regular at all $K$-points
of $X(K) \setminus W(K)$.
\end{theorem}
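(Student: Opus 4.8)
\textbf{Proof strategy for Theorem~\ref{ext-th}.} The plan is to follow the inductive argument from \cite{K-N} and \cite[Section~10]{Now2}, reducing the extension problem on $X$ to a problem that can be solved fiber-by-fiber after resolving singularities, and then gluing the local solutions by means of the \L{}ojasiewicz inequality and the descent property. The induction is on $\dim Z$. First I would dispose of the case where $Z$ is already non-singular: then hereditary rationality plus regularity away from $W$ lets one write $f$ locally as a ratio of regular functions with denominator vanishing only on $W$, and one concludes by a direct argument using the density property and continuity. For the inductive step, let $d=\dim Z$ and assume the theorem for all closed subvarieties of dimension $<d$. Passing to the components of $Z$ of top dimension (the lower-dimensional components being absorbed into a larger $W$ by the inductive hypothesis), we may assume $Z$ is irreducible of dimension $d$.

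Next I would apply resolution of singularities: choose a composite of blow-ups along smooth centers $\pi: \widetilde{Z}\to Z$ with $\widetilde{Z}$ non-singular, and—crucially—arrange that the centers lie over the singular locus of $Z$ and over $W$, so that $\pi$ is an isomorphism over a dense open subset of $Z$. By embedding this resolution suitably I would also resolve $X$ compatibly, obtaining a non-singular $\widetilde{X}$ with a proper birational $\widetilde{X}\to X$ containing $\widetilde{Z}$ as a closed non-singular subvariety and a closed $\widetilde{W}$ mapping to $W$. The pulled-back function $f\circ\pi$ is continuous hereditarily rational on $\widetilde{Z}(K)$ and regular off $\widetilde{W}(K)$; since $\widetilde{Z}$ is non-singular, the already-treated non-singular case yields a continuous hereditarily rational extension $\widetilde{F}$ to $\widetilde{X}(K)$, regular off $\widetilde{W}(K)$. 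The point is now to descend $\widetilde{F}$ back down to $X(K)$. Here Corollary~\ref{clo-th-cor-4} is exactly the tool: one must check that $\widetilde{F}$ is constant on the fibers of $\widetilde{X}(K)\to X(K)$. This is automatic over the locus where the map is an isomorphism, and over the exceptional locus it follows by continuity provided one knows that the fibers are contained in the closure of the isomorphism locus—which is where the closedness theorem (Theorem~\ref{clo-th}) and the density property enter. Thus $\widetilde{F}$ descends to a continuous $\mathcal{L}$-definable, in fact rational, function $F$ on $X(K)$, regular off $W(K)$, and restricting to $f$ on $Z(K)$.

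Finally I would verify that the descended $F$ is not merely continuous but \emph{hereditarily} rational on $X(K)$: for an arbitrary irreducible subvariety $V\subset X$, one restricts $F$ to $V$ and runs the same resolution-and-descent mechanism (or invokes the inductive hypothesis when $\dim(V\cap Z)<\dim V$), using the \L{}ojasiewicz inequality Theorem~\ref{Loj2} to control the denominators and guarantee that $F|_{V}$ is regular on a dense open subvariety. The main obstacle, and the step deserving the most care, is the descent across the exceptional locus of the resolution: one has to ensure simultaneously that the blow-up centers are chosen so that $\widetilde{F}$ genuinely factors through $\pi$ (continuity alone is not enough unless the exceptional fibers lie in the closure of the good locus, which is guaranteed here precisely because $\pi$ restricted to $K$-points is definably closed by Corollary~\ref{clo-th-cor-3}), and that the descended function remains $\mathcal{L}$-definable and rational. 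Granting the closedness theorem, the descent property, and the \L{}ojasiewicz inequality established above, together with resolution of singularities over the equicharacteristic-zero field $K$, the proof from \cite[Section~10]{Now2} goes through \emph{verbatim}.
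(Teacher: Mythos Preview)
Your bottom line---that the proof from \cite[Section~10]{Now2} goes through verbatim once the closedness theorem, the descent property, and the \L{}ojasiewicz inequality are available---is exactly what the paper says, and the paper gives no more detail than that.

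However, the sketch you supply of how that proof runs is not the \cite{K-N,Now2} argument, and the descent step as you describe it has a genuine gap. You extend $f\circ\pi$ from $\widetilde{Z}(K)$ to some $\widetilde{F}$ on $\widetilde{X}(K)$ using the non-singular case, and then propose to descend $\widetilde{F}$ along $\pi:\widetilde{X}\to X$ via Corollary~\ref{clo-th-cor-4}. But $\widetilde{F}$ is constrained only on $\widetilde{Z}(K)$; elsewhere it is an \emph{arbitrary} continuous hereditarily rational extension. Over a point of a blow-up center the fiber of $\pi$ is a full projective space, whereas $\widetilde{Z}$ meets that fiber only in a proper subvariety, so continuity from the isomorphism locus cannot force $\widetilde{F}$ to be constant on the fiber. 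Corollary~\ref{clo-th-cor-4} therefore does not apply.

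The actual inductive scheme in \cite{K-N,Now2} is different: one first applies the induction hypothesis to the restriction $f|_{W(K)}$ (hereditarily rational on a variety of smaller dimension) to extend it to a continuous hereditarily rational $G$ on $X(K)$, subtracts to reduce to the case $f|_{W(K)}\equiv 0$, and then uses the \L{}ojasiewicz inequality (Theorem~\ref{Loj2}) to show that a representation $f=p/q$ with $q$ vanishing only on $W$ can be corrected to a continuous rational function on $X(K)$ that extends $f$. The descent property (Corollary~\ref{clo-th-cor-4}) is used not to push the final extension down, but in the auxiliary fact that on a \emph{non-singular} variety every continuous rational function is hereditarily rational: one blows up to resolve the indeterminacy of the rational representation, obtains a regular (hence constant-on-fibers) function upstairs, and descends.
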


The corresponding theorem for hereditarily rational functions of
class $\mathcal{C}^{k}$, $k \in \mathbb{N}$, remains an open
problem as yet. This leads to the concept of $k$-regulous
functions, $k \in \mathbb{N}$, on a subvariety $Z(K)$ of a
non-singular $K$-variety $X(K)$, i.e.\ those functions on $Z(K)$
which are the restrictions to $Z(K)$ of rational functions of
class $\mathcal{C}^{k}$ on $X(K)$.

\vspace{1ex}

In real algebraic geometry, the theory of regulous functions,
varieties and sheaves was developed by
Fichou--Huisman--Mangolte--Monnier~\cite{FHMM}. Regulous geometry
over Henselian rank one valued fields was studied in our
paper~\cite[Sections~11, 12, 13]{Now2}. The basic tools we applied
are the closedness theorem, descent property, the \L{}ojasiewicz
inequalities and transformation to a normal crossing by blowing
up. We should emphasize that all those our results, including the
Nullstellensatz and Cartan's theorems A and B for regulous
quasi-coherent sheaves, remain true over arbitrary Henselian
valued fields (of equicharacteristic zero) with almost the same
proofs.

\vspace{1ex}

We conclude this paper with the following comment.

\begin{remark}
In our recent paper~\cite{Now-6}, we established a definable,
non-Archimedean version of the closedness theorem over Henselian
valued fields (of equicharacteristic zero) with analytic structure
along with several applications. Let us mention, finally, that the
theory of analytic structures goes back to the work of many
mathematicians (see
e.g.~\cite{De-Dries,Dries-1,Lip,Dries-Mac,Dries-Has,L-R-0,L-R,C-Lip-R,C-Lip-0,C-Lip}).
\end{remark}

\vspace{2ex}

\vspace{2ex}

\begin{small}
Institute of Mathematics

Faculty of Mathematics and Computer Science

Jagiellonian University

ul.~Profesora \L{}ojasiewicza 6

30-348 Krak\'{o}w, Poland

{\em E-mail address: nowak@im.uj.edu.pl}
\end{small}

\end{document}